\title[BG inequality in positive characteristic]
{On the Bogomolov-Gieseker inequality in positive characteristic}
\date{}
\author{Naoki Koseki}
\theoremstyle{plain}
\newtheorem{thm}{Theorem}[section]
\newtheorem{prop}[thm]{Proposition}
\newtheorem{def-prop}[thm]{Definition-Proposition}
\newtheorem{lem}[thm]{Lemma}
\newtheorem*{thm*}{Theorem}
\theoremstyle{definition}
\newtheorem{defin}[thm]{Definition}
\newtheorem*{NaC}{Notation and Convention}
\newtheorem*{ACK}{Acknowledgement}
\theoremstyle{remark}
\newtheorem{rmk}[thm]{Remark}
\newtheorem{ex}[thm]{Example}
\DeclareMathOperator{\ch}{ch}
\newcommand{\dR}{\mathbf{R}}
\newcommand{\bP}{\mathbb{P}}
\newcommand{\bC}{\mathbb{C}}
\newcommand{\bR}{\mathbb{R}}
\newcommand{\bQ}{\mathbb{Q}}
\newcommand{\bZ}{\mathbb{Z}}
\newcommand{\mcA}{\mathcal{A}}
\newcommand{\mcD}{\mathcal{D}}
\newcommand{\mcF}{\mathcal{F}}
\newcommand{\mcH}{\mathcal{H}}
\newcommand{\mcO}{\mathcal{O}}
\newcommand{\mcS}{\mathcal{S}}
\newcommand{\mcT}{\mathcal{T}}
\newcommand{\mcV}{\mathcal{V}}
\DeclareMathOperator{\Hom}{Hom}
\DeclareMathOperator{\Coh}{Coh}
\DeclareMathOperator{\Ker}{Ker}
\DeclareMathOperator{\Pic}{Pic} 
\DeclareMathOperator{\NS}{NS}
\DeclareMathOperator{\Image}{Im}
\DeclareMathOperator{\cl}{cl}
\DeclareMathOperator{\BD}{BD}
\DeclareMathOperator{\cha}{char}
\DeclareMathOperator{\Bs}{Bs}
\DeclareMathOperator{\pr}{pr}
\newcommand{\Step}[1]{\smallskip\paragraph{\sc{Step #1}}}
\begin{document}
\maketitle

\begin{abstract}
We prove a version of the Bogomolov-Gieseker inequality 
on smooth projective surfaces of general type in positive characteristic, 
which is stronger than the result by Langer 
when the ranks of vector bundles are sufficiently large. 
Our inequality enables us to construct Bridgeland stability conditions 
with full support property on all smooth projective surfaces 
in positive characteristic. 
We also prove the BG type inequality 
for higher diminsional varieties. 
\end{abstract}

\setcounter{tocdepth}{1}
\tableofcontents

\section{Introduction}
In the theory of algebraic surfaces in characteristic zero, 
the Bogomolov-Gieseker (BG) inequality \cite{bog78, gie79} 
is a powerful tool, which is the inequality 
for the Chern characters 
of slope semistable vector bundles: 
\begin{equation} \label{eq:bgintro}
\Delta(E)=\ch_1(E)^2-2\ch_0(E)\ch_2(E) \geq 0. 
\end{equation}

Among others, one of the important consequences of the BG inequality 
is the existence of Bridgeland stability conditions on surfaces 
\cite{ab13, bri07, bri08}. 
The theory of Bridgeland stability conditions on surfaces 
has been applied to various classical problems in algebraic geometry 
such as birational geometry \cite{abch13, bmw14, ch14, ch15, ch16, chw17, lz16, lz18}, 
Brill-Noether problem \cite{bay18, bl17}, 
higher rank Clifford indices \cite{fl18, li19b, kos20}, 
and so on. 

In positive characteristic, it is known that 
the BG inequality (\ref{eq:bgintro}) 
does not hold in general 
(cf. \cite{muk13, ray78}). 
When a surface $S$ has Kodaira dimension $\kappa(S) \leq 1$ 
and it is not quasi-elliptic, 
Langer \cite{lan16} proves that 
the inequality (\ref{eq:bgintro}) holds. 
For surfaces of general type, 
although Langer \cite{lan04, lan15} proves 
the inequality with a modified term, 
it is still a mysterious problem 
what the best possible form of the inequality is. 
In particular, the result in \cite{lan04, lan15} 
is not enough to construct Bridgeland stability conditions 
on surfaces in positive characteristic. 

In this paper, we investigate 
the improvement of Langer's results \cite{lan04, lan15} 
for general type surfaces. 
The following is our main result in this paper: 

\begin{thm}[Theorem \ref{thm:bg1}]
\label{thm:bgintro}
Let $S$ be a smooth projective surface 
defined over an algebraically closed field of positive characteristic. 
Then there exists a constant $C_S \geq 0$, 
depending only on a birational equivalence class of $S$, 
satisfying the following condition: 
For every numerically non-trivial nef divisor $H$ on $S$ 
and $\mu_H$-semistable torsion free sheaf $E \in \Coh(S)$ on $S$, 
the inequality 
\[
\Delta(E)+C_S\ch_0(E)^2 \geq 0. 
\]
holds. Explicitly, we can take the constant $C_S$ as follows: 
\begin{enumerate} 
\item When $S$ is a minimal surface of general type, then 
\[
C_S=2+5K_S^2-\chi(\mcO_S).
\] 
\item When $\kappa(S)=1$ and $S$ is quasi-elliptic, then $C_S=2-\chi(\mcO_S)$.
\item Otherwise, $C_S=0$. 
\end{enumerate}
\end{thm}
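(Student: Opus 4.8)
The plan is to refine Langer's analysis of Frobenius pull-backs of slope semistable sheaves \cite{lan04,lan15,lan16}, tracking not only the \emph{slopes} but the \emph{numerical classes} of the Harder--Narasimhan factors of $F^{k*}E$; this is what replaces Langer's bound (quartic in the rank) by one linear in $\ch_0(E)^2$. Two reductions come first. We may take $H$ ample, since $\Delta(E)+C_S\ch_0(E)^2$ is continuous in the polarization and $\mu_H$-semistability for numerically nontrivial nef $H$ is controlled by $\mu_{H'}$-semistability for nearby ample $H'$. We may take $S$ minimal: if $\pi\colon S\to S'$ is the blow-up of a point, then for $\mu_H$-semistable $E$ on $S$ one checks that a suitable twist $\pi_*(E\otimes\mcO_S(m\,\Exc(\pi)))$ is $\mu_{\pi_*H}$-semistable on $S'$ with $\pi_*H$ numerically nontrivial and nef and of discriminant $\le\Delta(E)$, so that the inequality on $S'$ yields the one on $S$ with the same constant; iterating reduces the problem to the minimal model, and since $K_{S_{\min}}^2$ and $\chi(\mcO_S)$ are invariants of the birational class, this accounts for the stated shape of $C_S$.

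So let $S$ be minimal and $H$ ample, $E$ torsion free $\mu_H$-semistable of rank $r$. If $E$ is strongly $\mu_H$-semistable, then $\Delta(E)\ge0$ by the Bogomolov inequality for strongly semistable sheaves. Otherwise let $k\ge1$ be minimal with $F^{k*}E$ not $\mu_H$-semistable, with Harder--Narasimhan factors $\mathrm{gr}_1,\dots,\mathrm{gr}_\ell$ of ranks $r_i$, slopes $\mu_1>\dots>\mu_\ell$, and classes $\xi_i=\ch_1(\mathrm{gr}_i)/r_i$. Since $\ch_0(F^{k*}E)=r$ and $\ch_1(F^{k*}E)=p^k\ch_1(E)$, the quadratic identity for discriminants under a filtration gives
\[
p^{2k}\,\Delta(E)\;=\;\Delta(F^{k*}E)\;=\;\sum_i\frac{r}{r_i}\,\Delta(\mathrm{gr}_i)\;-\;\sum_{i<j}r_i r_j\,(\xi_i-\xi_j)^2 .
\]
Each $\mathrm{gr}_i$ is semistable of rank $<r$, so by induction on the rank $\Delta(\mathrm{gr}_i)\ge -C_S r_i^2$ and thus $\sum_i\frac{r}{r_i}\Delta(\mathrm{gr}_i)\ge -C_S r^2$; it therefore suffices to prove $\sum_{i<j}r_ir_j(\xi_i-\xi_j)^2\le C_S r^2(p^{2k}-1)$. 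Here the Cartier connection on $F^{k*}E=F^*(F^{(k-1)*}E)$ enters: no Harder--Narasimhan subsheaf $G_i$ ($i<\ell$) is $\nabla$-invariant, since $\mu_H(G_i)>p^k\mu_H(E)$ would force it to descend and destabilize $F^{(k-1)*}E$, against the minimality of $k$; hence the induced $\mcO_S$-linear obstructions are nonzero maps into tensor products with $\Omega_S$. If $\Omega_S$ is $\mu_H$-semistable — in particular if $\kappa(S)\le1$ and $S$ is not quasi-elliptic, by Langer's results — no such map exists, so $E$ was strongly semistable and $C_S=0$. Otherwise $\Omega_S$ has a saturated destabilizing sub-line-bundle $L$; setting $\Theta:=2L-K_S$, the obstruction maps constrain the classes $\xi_i-\xi_j$ numerically in terms of $\Theta$, and a Hodge-index estimate bounds $\sum_{i<j}r_ir_j(\xi_i-\xi_j)^2$ by $r^2(p^{2k}-1)$ times an explicit function of $\Theta^2$ and $\chi(\mcO_S)$. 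By Noether's formula $\Theta^2=K_S^2-4c_2(S)=5K_S^2-48\chi(\mcO_S)$, and keeping track of the constants (using the extra gain $p^{2k}\ge4$) one gets $C_S=2+5K_S^2-\chi(\mcO_S)$ in the minimal general type case; in the quasi-elliptic case $L$ comes from the vertical foliation of the quasi-elliptic fibration, so $\Theta$ is numerically vertical, $\Theta^2\le0$ by Zariski's lemma, and the same computation gives $C_S=2-\chi(\mcO_S)$.

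The hard part is justifying that the Harder--Narasimhan factors of $F^{k*}E$ are numerically aligned with the destabilizing direction $\Theta$ of $\Omega_S$, uniformly in $k$, $r$ and the nef class $H$. This is what must replace Langer's slope inequality $\mu_{\max}(F^{k*}E)-\mu_{\min}(F^{k*}E)\le(r-1)\,L_{H,\max}(\Omega_S)$, which yields only a bound quartic in $r$; getting a bound governed by the single self-intersection number $\Theta^2$ requires analyzing the Cartier-connection obstruction at the level of numerical classes, not slopes, and this is the step where the geometry of $S$ — through $K_S^2$ and $\chi(\mcO_S)$ — really enters.
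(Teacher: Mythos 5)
Your proposal follows Langer's original Frobenius/Cartier-connection strategy, which is explicitly \emph{not} the route of the paper: the paper's proof is designed precisely to avoid any analysis of semistability under Frobenius pull-back. More importantly, your argument has a genuine gap at its center, and you flag it yourself: the claim that the Harder--Narasimhan classes $\xi_i$ of $F^{k*}E$ are ``numerically aligned'' with a single class $\Theta=2L-K_S$ coming from a destabilizing sub-line-bundle of $\Omega_S$, so that $\sum_{i<j}r_ir_j(\xi_i-\xi_j)^2\le r^2(p^{2k}-1)\cdot(\text{function of }\Theta^2,\chi)$, is asserted but never established. The Cartier-connection obstruction produces nonzero maps $G_i\to (F^{k*}E/G_i)\otimes\Omega_S$, and the only numerical information these yield without further input is a bound on slope differences; this is exactly why Langer's method gives a bound quartic in the rank, and no mechanism is offered here to upgrade slope control to control of the full classes $\xi_i-\xi_j$. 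The numerology is also not credible: you compute $\Theta^2=5K_S^2-48\chi(\mcO_S)$ and then claim the answer is $2+5K_S^2-\chi(\mcO_S)$ after ``keeping track of constants,'' with no account of how $-48\chi$ becomes $-\chi$. In the paper the coefficient $5$ has nothing to do with $c_2(S)$: it arises because $5K_S$ is very ample on the canonical model, so the quantity $d_{[S]}=\min\{K_S\cdot L: L\in\BD(S),\ L^2\ge K_SL\}$ satisfies $d_{[S]}\le 5K_S^2$.

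For comparison, the paper's actual argument is a simultaneous induction on $\ch_0(E)$ between the BG inequality $T^1(r)$ and a restriction theorem $T^2(r)$ for a general pencil in $|L|$ (Langer's pencil trick on the incidence blow-up gives $T^1(r-1)\Rightarrow T^2(r)$). For $T^2(r)\Rightarrow T^1(r)$ one reduces to the minimal model and to $L\in\BD(S)$ with $K_SL=d_{[S]}$ (your two reductions are essentially Propositions \ref{prop:deform} and \ref{prop:minimal} and are fine); if $\widetilde\Delta_{[S]}(E)<0$ then $E|_C$ is semistable for general $C\in|L|$, and the inequality follows from Serre duality and Riemann--Roch, $\chi(\mcO_S)r^2-\Delta(E)=\chi(E,E)\le 1+\hom(E,E(K_S))$, together with the elementary curve bound $\hom(E|_C,E(K_S)|_C)\le (K_SL+1)r^2$ applied through the sequence $0\to E(K_S-L)\to E(K_S)\to E(K_S)|_C\to 0$. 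This is where the $+2$, the $-\chi(\mcO_S)$, and the $d_{[S]}\le 5K_S^2$ in the constant come from. The quasi-elliptic case is handled the same way with $L=K_S$ (using $K_S^2=0$), not via foliations and Zariski's lemma. As written, your proposal does not constitute a proof.
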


The above theorem is strong enough to 
construct Bridgeland stability conditions on 
an arbitrary surface in positive characteristic: 

\begin{thm}[Theorem \ref{thm:stab}]
\label{thm:stabintro}
Let $S$ be a smooth projective surface defined 
over an algebraically closed field of positive characteristic. 
Then there exist Bridgeland stability conditions on $D^b(S)$ 
satisfying a full support property. 
\end{thm}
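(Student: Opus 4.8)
The plan is to follow the now-standard strategy of Bridgeland \cite{bri07, bri08} and Arcara-Bertram \cite{ab13} for constructing stability conditions on surfaces, using the tilted heart of the standard t-structure, and to feed in the strengthened Bogomolov-Gieseker inequality of Theorem \ref{thm:bgintro} exactly where the classical BG inequality is used in characteristic zero. Fix an ample divisor $H$ on $S$ and the constant $C_S \geq 0$ provided by Theorem \ref{thm:bgintro}. For real parameters $\beta \in \bR$ and $\omega \in \bR_{>0}$ I would first define the twisted Chern character $\ch^\beta = e^{-\beta H} \ch$ and the torsion pair $(\mcT_\beta, \mcF_\beta)$ in $\Coh(S)$ generated by the $\mu_H$-slope of $\ch^\beta$: $\mcT_\beta$ contains the $\mu_H$-semistable sheaves of twisted slope $> 0$ together with torsion sheaves, and $\mcF_\beta$ those of twisted slope $\leq 0$. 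Tilting gives the heart $\mcA^\beta = \langle \mcF_\beta[1], \mcT_\beta \rangle \subset D^b(S)$, which is noetherian by Langer's boundedness results in positive characteristic. On $\mcA^\beta$ I would then define the central charge
\[
Z_{\beta,\omega}(E) = -\int_S e^{-(\beta + \sqrt{-1}\,\omega)H}\,\ch(E) = \left(-\ch_2^\beta(E) + \tfrac{\omega^2}{2}\ch_0^\beta(E)\right) + \sqrt{-1}\,\omega\, H\cdot\ch_1^\beta(E),
\]
and verify that $(Z_{\beta,\omega}, \mcA^\beta)$ is a weak stability condition: positivity of the imaginary part on $\mcT_\beta$ and its vanishing forcing $E \in \langle \mcF_\beta[1]\rangle$ follows from the construction of the tilt, and the Harder-Narasimhan property follows from noetherianity of $\mcA^\beta$ plus a discreteness argument on the image of $Z$.

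The crucial step, and the one where the hypothesis of the theorem enters, is proving that $(Z_{\beta,\omega}, \mcA^\beta)$ is an honest stability condition — i.e. that $Z_{\beta,\omega}(E) \neq 0$ for every nonzero $E \in \mcA^\beta$, equivalently that objects with $H\cdot\ch_1^\beta = 0$ have strictly negative real part — and, more importantly, that it satisfies the support property with respect to a suitable quadratic form. For the support property one wants a quadratic form $Q$ on the numerical Grothendieck group, negative semidefinite on $\ker Z_{\beta,\omega}$, with $Q(E) \geq 0$ for all semistable $E$. The natural candidate is a perturbation of the discriminant, of the shape
\[
Q(E) = \Delta(E) + C_S\,\ch_0(E)^2
\]
pulled back appropriately along the twist. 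Here Theorem \ref{thm:bgintro} does precisely the work that the classical BG inequality does in Bridgeland's original argument: for a $Z_{\beta,\omega}$-semistable object $E \in \mcA^\beta$ one runs the standard reduction — if $E$ is a $\mu_H$-semistable sheaf, Theorem \ref{thm:bgintro} gives $Q(E) \geq 0$ directly; if not, one uses the HN filtration of $E$ with respect to the tilted slope, the fact that $\ch_0^\beta, H\cdot\ch_1^\beta$ behave additively and are constrained by membership in $\mcA^\beta$, and a Hodge-index-type inequality to reduce to the sheaf case, as in \cite{ab13} or \cite[\S 7]{bri08}.

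The main obstacle I anticipate is technical rather than conceptual: in positive characteristic the constant $C_S$ is nonzero for surfaces of general type and quasi-elliptic surfaces with $\kappa = 1$, so the relevant quadratic form is genuinely shifted, and one must check that after this shift $Q$ is still negative semidefinite on $\ker Z_{\beta,\omega}$ for the chosen parameters — this constrains the region of $(\beta,\omega)$ for which the construction works and requires a careful sign analysis of the $2\times 2$ (or, once the $\ch_0^2$ term is included, effectively $3\times 3$) Gram matrix of $Q$ restricted to the kernel, using the Hodge index theorem on $S$. A secondary point needing care is that boundedness of semistable objects and the local-finiteness/wall-and-chamber structure, which in characteristic zero rest on Langer's restriction theorems, must be invoked in their positive-characteristic form; these are available in \cite{lan04, lan16}, so this is a matter of citing the correct statements rather than proving anything new. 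Once $Q$ is pinned down and the support property verified on a nonempty open set of parameters, deformation of the resulting point along the $\GL^+_2(\bR)$-action and the standard covering argument give stability conditions on all of $D^b(S)$ with the full support property, completing the proof.
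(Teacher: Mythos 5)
Your overall strategy—tilt $\Coh(S)$ at a slope parameter and feed Theorem \ref{thm:bgintro} into the construction where the classical BG inequality is used in characteristic zero—is the same as the paper's, but two specific steps fail as written.

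First, the positivity axiom for the stability function does not hold for your unmodified central charge. For a $\mu_H$-semistable $F\in\mcF_\beta$ with $H\ch_1^{\beta H}(F)=0$, the object $F[1]$ lies in the tilted heart and one must show $\Re Z_{\beta,\omega}(F)>0$. Theorem \ref{thm:bgintro} together with the Hodge index theorem only yields $\ch_2^{\beta H}(F)\leq \tfrac{C_S}{2H^2}\ch_0(F)$, hence $\Re Z_{\beta,\omega}(F)\geq \ch_0(F)\bigl(\tfrac{\omega^2H^2}{2}-\tfrac{C_S}{2H^2}\bigr)$, which is positive only when $\omega^2(H^2)^2>C_S$. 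Since $C_S>0$ for general type and quasi-elliptic surfaces, your pair $(Z_{\beta,\omega},\mcA^\beta)$ is simply not a stability condition for small $\omega$. You would either have to restrict explicitly to the range $\omega^2(H^2)^2>C_S$ (you gesture at a parameter constraint, but you locate it in the negative-semidefiniteness of $Q$ on $\Ker Z$, which is not where the problem is), or do what the paper does: add a term proportional to $C_{[S]}\ch_0^B$ to the real part of the central charge, which makes the construction uniform in the parameters. Identifying and carrying out one of these fixes is the actual content of the existence part of the proof.

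Second, the quadratic form $Q=\Delta+C_S\ch_0^2$ cannot witness the support property, because it is not nonnegative on all semistable objects: for a rank-zero class with $\ch_1=D$ an integral curve of negative self-intersection (e.g.\ a twist of $\mcO_D$ for a $(-1)$- or $(-2)$-curve $D$, which is stable for suitable parameters), one has $Q=D^2<0$. The failure is on the positivity side for $\ch_0=0$ classes, not on the kernel of $Z$, so no Hodge-index analysis of the Gram matrix on $\Ker Z$ repairs it. The paper's form is $\widetilde{\Delta}_{[S]}+C_H(H\ch_1^B)^2$, where $C_H$ is the constant of Lemma \ref{lem:support} guaranteeing $C_H(HD)^2+D^2\geq 0$ for every effective divisor $D$; this extra summand is what restores nonnegativity on rank-zero semistable classes while keeping negative definiteness on $\Ker Z$ (cf.\ \cite[Theorem 3.5]{bms16}). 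The remaining ingredients you list (noetherianity of the tilt, the HN property, boundedness in positive characteristic via \cite{lan04,lan16}) are indeed routine once these two points are fixed.
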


We also show a higher dimensional version of 
Theorem \ref{thm:bgintro}: 
\begin{thm}[Theorem \ref{thm:BGhigh}]
\label{thm:bghigherintro}
Let $X$ be a smooth projective variety of dimension $n \geq 2$, 
defined over an algebraically closed field of positive characteristic, 
and $H$ a very ample divisor on $X$. 
Let $H_1, \cdots, H_{n-2} \in |H|$ be general hyperplanes 
and put $S:=H_1 \cap \cdots \cap H_{n-2}$. 
Define a constant $C_{X, H}:=C_S$ as in Theorem \ref{thm:bgintro}. 
Then for every $\mu_H$-semistable torsion free sheaf $E \in \Coh(X)$, 
we have 
\[
H^{n-2}\Delta(E)+C_{X, H}\ch_0(E)^2 \geq 0. 
\]
\end{thm}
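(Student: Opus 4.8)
The plan is to reduce the statement to the surface case, Theorem~\ref{thm:bgintro}, by restricting $E$ to the general linear section surface $S=H_1\cap\dots\cap H_{n-2}$. Write $\iota\colon S\hookrightarrow X$ for the inclusion. Since the Chern character commutes with pullback and $[S]=H^{n-2}$ in the Chow ring of $X$, we get $\ch_0(E|_S)=\ch_0(E)$ and, as $S$ is a surface,
\[
\Delta(E|_S)=\ch_1(E|_S)^2-2\,\ch_0(E|_S)\,\ch_2(E|_S)=H^{n-2}\cdot\Delta(E).
\]
Thus the desired inequality is exactly the conclusion of Theorem~\ref{thm:bgintro} applied to the triple $\bigl(S,\,H|_S,\,E|_S\bigr)$ with the same constant $C_S$: for general $H_1,\dots,H_{n-2}$ the sheaf $E|_S$ is torsion-free (since $E$ is) and $H|_S$ is ample, so the only missing input is $\mu_{H|_S}$-semistability of $E|_S$.

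This is where the real difficulty lies, since $\mu_H$-semistability of $E$ need not descend to a general hyperplane section, already in characteristic zero --- e.g.\ $T_{\bP^n}|_{\bP^{n-1}}$ is unstable while $T_{\bP^n}$ is stable. To control this I would invoke Langer's restriction theorem \cite{lan04}, applied successively to the general members $H_1,\dots,H_{n-2}$: in its effective form it bounds the Harder--Narasimhan polygon of $E|_S$, in particular $\mu_{H|_S,\max}(E|_S)-\mu_{H|_S,\min}(E|_S)$. Let $0=F_0\subset\dots\subset F_\ell=E|_S$ be the Harder--Narasimhan filtration, with semistable quotients $G_j=F_j/F_{j-1}$ of rank $r_j$, and set $r=\ch_0(E)$. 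Bilinearity of the discriminant gives the identity
\[
\Delta(E|_S)=\sum_j\frac{r}{r_j}\,\Delta(G_j)-\sum_{i<j}r_ir_j\Bigl(\tfrac{c_1(G_i)}{r_i}-\tfrac{c_1(G_j)}{r_j}\Bigr)^2 .
\]
Applying Theorem~\ref{thm:bgintro} to each $G_j$ yields $\sum_j\tfrac{r}{r_j}\bigl(\Delta(G_j)+C_Sr_j^2\bigr)\ge 0$, hence $\sum_j\tfrac{r}{r_j}\Delta(G_j)\ge -C_Sr^2$, while the Hodge index theorem on $S$ bounds each squared term by $(H|_S)^{-2}\bigl(\mu_{H|_S}(G_i)-\mu_{H|_S}(G_j)\bigr)^2$; so the subtracted correction is controlled by the Harder--Narasimhan discrepancy just bounded.

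The step I expect to be the genuine obstacle is closing this estimate. The subtracted sum of squares is strictly positive once $E|_S$ is unstable, so ``Theorem~\ref{thm:bgintro} on the pieces $+$ Hodge index'' alone falls short unless the slack $\sum_j\tfrac{r}{r_j}\bigl(\Delta(G_j)+C_Sr_j^2\bigr)$ dominates it --- and this is exactly where the precise shape of $C_S$ is used: the constant is calibrated, through Langer's analysis of Frobenius pullbacks and of the instability of $\Omega$, to leave the needed room, the relevant invariant on $X$ being comparable with the one on the surface section $S$ via the surjection $\Omega_X|_S\twoheadrightarrow\Omega_S$. Concretely I would either (i) prove a strengthened surface inequality carrying an explicit correction term for the Harder--Narasimhan discrepancy of an arbitrary torsion-free sheaf, and propagate it through the restriction by induction on $\dim X$; or (ii) re-run Langer's $h^0$-bounding argument directly on $X$ with the polarization $H$, cutting down to the surface $S$ only at the point where $\Omega$ is estimated, which produces the same constant. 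Everything else is routine Chern-class and intersection-theoretic bookkeeping.
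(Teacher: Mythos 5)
Your reduction of the inequality to the surface section $S$ is set up correctly (for general $H_1,\dots,H_{n-2}$ one indeed has $\ch_0(E|_S)=\ch_0(E)$ and $\Delta(E|_S)=H^{n-2}\Delta(E)$), and you have correctly located the difficulty in the possible instability of $E|_S$. But the argument you sketch to handle that instability does not close, and you say so yourself. After applying Theorem \ref{thm:bg1} to the Harder--Narasimhan factors $G_j$ of $E|_S$ and the Hodge index theorem, what you obtain is
\[
\Delta(E|_S)+C_S\ch_0(E)^2 \;\geq\; -\tfrac{1}{H^2}\sum_{i<j}r_ir_j(\mu_i-\mu_j)^2,
\]
with the correction term on the wrong side of zero; Langer's restriction theorem from \cite{lan04} bounds $\mu_{\max}-\mu_{\min}$ of the restriction in terms of $\Delta(E)$ itself, so substituting it back in is circular and does not recover the clean constant $C_S$. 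Your ``option (i)'' --- a strengthened inequality carrying an explicit correction for the Harder--Narasimhan discrepancy of the restriction --- is in fact exactly what is needed, but you neither formulate nor prove it, and it is the entire content of the argument.

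What the paper actually does is prove that strengthened statement (Theorem \ref{thm:restrhigh}) in tandem with the BG inequality (Theorem \ref{thm:BGhigh}) by a double induction on the rank and the dimension, in the enlarged class of blow-ups of $X$ along ordered configurations of codimension-two linear subspaces (following \cite{blms17}): the statement $T^2(r)$ asserts that if $E|_D$ is unstable for general $D\in|\psi^*H|$ then $\sum_{i<j}r_ir_j(\mu_i-\mu_j)^2\leq H^n\widetilde{\Delta}_{Y,\psi^*H}(E)$, so its contrapositive forces $E|_D$ to be semistable whenever the discriminant is negative; one then descends one hyperplane at a time rather than cutting to the surface in a single step. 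The implication $T^1(r-1)\Rightarrow T^2(r)$ is where the pencil/incidence-variety construction and the change-of-polarization Proposition \ref{prop:changepol} enter --- this is why the enlarged blow-up category is needed, since the incidence variety of a pencil is itself such a blow-up. None of this machinery appears in your proposal. Two further omissions: the constant $C_{X,H}$ must be shown to be independent of the general choice of $H_1,\dots,H_{n-2}$, which the paper handles via the MMP result (Definition-Proposition \ref{def-prop:const} and Theorem \ref{thm:mmp}); and your description of where $C_S$ comes from (``Langer's analysis of Frobenius pullbacks and of the instability of $\Omega$'') does not match the paper, which explicitly avoids Frobenius pullbacks and instead derives $C_S$ from a bound on $\hom(E,E(K_S))$ obtained by restricting to a curve in $|L|$ with $L$ a birational pull-back of a very ample divisor minimizing $K_S\cdot L$.
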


Specializing to hypersurfaces in the projective spaces, 
we have the following better result: 
\begin{thm}[Theorem \ref{thm:hyper}] 
\label{thm:hyperintro}
Let $k$ be an algebraically closed field of positive characteristic. 
Let $S^n_d \subset \bP^{n+1}_k$ be a smooth hypersurface 
of degree $d \geq 1$, dimension $n \geq 2$. 
Denote by $H$  the restriction of the hyperplane class on $\bP^{n+1}$. 
Then for every $\mu_H$-semistable torsion free sheaf $E \in \Coh(S^n_d)$, 
we have 
\[
\overline{\Delta}_H(E):=\left(H^{n-1}\ch_1(E) \right)^2-2H^n\ch_0(E)H^{n-2}\ch_2(E) \geq 0. 
\] 

In particular, if $\Pic(S^n_d)=\bZ[H]$, then we have 
the usual BG inequality $H^{n-2}\Delta(E) \geq 0$. 
\end{thm}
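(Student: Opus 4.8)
The plan is to reduce the statement to the case of a smooth surface in $\bP^3$ and then, in the only genuinely difficult range of degrees, to exploit the fact that smooth hypersurfaces are particularly well behaved with respect to Frobenius because they lift to characteristic zero (and to $W_2(k)$). As in the proof of Theorem~\ref{thm:bghigherintro}, cutting $\bP^{n+1}$ by $n-2$ general hyperplanes $H_1,\dots,H_{n-2}\in|H|$ produces a smooth degree-$d$ surface $S:=S^n_d\cap H_1\cap\cdots\cap H_{n-2}\subset\bP^3$, and for a $\mu_H$-semistable torsion free sheaf $E$ the restriction $E|_S$ is again $\mu_{H|_S}$-semistable and torsion free. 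Since $E$ is torsion free one has $Li^*E=i^*E$ for the inclusion $i\colon S\hookrightarrow S^n_d$, so $\ch(E|_S)=i^*\ch(E)$; combined with $[S]=H^{n-2}$ and the projection formula this gives
\[
\overline{\Delta}_{H|_S}(E|_S)=\bigl(H^{n-1}\ch_1(E)\bigr)^2-2H^n\ch_0(E)\,H^{n-2}\ch_2(E)=\overline{\Delta}_H(E),
\]
so it suffices to treat the case $S=S^2_d\subset\bP^3$.

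On the surface $S$ the Hodge index theorem gives $(H\cdot\ch_1(E))^2\ge H^2\cdot\ch_1(E)^2$, hence $\overline{\Delta}_H(E)\ge H^2\,\Delta(E)$; and passing to a Jordan--Hölder filtration, the exact identity $\overline{\Delta}_H(E)/\ch_0(E)=\sum_i\overline{\Delta}_H(G_i)/\ch_0(G_i)$ (whose cross terms vanish because consecutive graded pieces have equal $\mu_H$-slope) reduces us to $\mu_H$-stable $G$. When $d\le 4$ the surface $S$ is rational ($d\le 3$, so $\kappa(S)=-\infty$) or a K3 surface ($d=4$, so $\kappa(S)=0$), hence $C_S=0$ by Theorem~\ref{thm:bgintro}(3); therefore $\Delta(G)\ge 0$ and $\overline{\Delta}_H(G)\ge H^2\Delta(G)\ge 0$.

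The main obstacle is the case $d\ge 5$, where $S$ is minimal of general type with $K_S=(d-4)H$ ample and $C_S=2+5K_S^2-\chi(\mcO_S)>0$, so Theorem~\ref{thm:bgintro} together with the previous paragraph yields only $\overline{\Delta}_H(E)\ge -dC_S\ch_0(E)^2$, which is far too weak. I would argue by a dichotomy on Frobenius behaviour. If $E$ is strongly $\mu_H$-semistable, then each Frobenius pullback $F^{e*}E$ is torsion free and $\mu_H$-semistable (as $F$ is finite and flat on the regular surface $S$), so applying Theorem~\ref{thm:bgintro} to $F^{e*}E$ gives $p^{2e}\Delta(E)+C_S\ch_0(E)^2\ge 0$ for every $e$, and letting $e\to\infty$ forces $\Delta(E)\ge 0$, hence $\overline{\Delta}_H(E)\ge 0$. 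The hard part is when $E$ is not strongly semistable: one must then control the instability of the iterated Frobenius pullbacks, and this is where the hypersurface hypothesis should be used, since $S$ lifts to $W_2(k)$, so Langer's strong restriction and structure theorems for Frobenius pullbacks apply in their sharpest form; the residual instability is governed by the $\mu_H$-instability of $\Omega_S$, which for $S\subset\bP^3$ is bounded via the conormal sequence $0\to\mcO_S(-d)\to\Omega_{\bP^3}|_S\to\Omega_S\to 0$ and the (semi)stability of $\Omega_{\bP^3}$. Feeding these bounds into Langer's refined Bogomolov inequality should give exactly $\overline{\Delta}_H(E)\ge 0$; arranging that the correction term vanishes on the nose, rather than merely becoming small for $p\gg 0$, is where the real difficulty lies.

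Finally, for the last assertion: if $\Pic(S^n_d)=\bZ[H]$ then $\ch_1(E)=cH$ for some $c\in\bZ$, so $\bigl(H^{n-1}\ch_1(E)\bigr)^2=c^2(H^n)^2=H^n\cdot H^{n-2}\ch_1(E)^2$, whence $\overline{\Delta}_H(E)=H^n\cdot H^{n-2}\Delta(E)$; since $H^n=d>0$ the inequality $\overline{\Delta}_H(E)\ge 0$ is equivalent to $H^{n-2}\Delta(E)\ge 0$.
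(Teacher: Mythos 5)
Your proposal has two genuine gaps, and together they cover essentially all of the actual content of the theorem. The first is the reduction to $n=2$: you assert that the restriction $E|_S$ of a $\mu_H$-semistable sheaf to a general complete intersection surface $S=S^n_d\cap H_1\cap\cdots\cap H_{n-2}$ is again slope semistable. This is false in general for sections of the polarization itself (already $T_{\mathbb{P}^2}$ restricted to a line is $\mcO(2)\oplus\mcO(1)$); Mehta--Ramanathan applies only to $|mH|$ with $m\gg 0$, and Langer's effective restriction theorems require an a priori bound on $\Delta(E)$ as input, which is exactly what is being proved, so invoking them here is circular. This failure is precisely why the paper proves Theorem \ref{thm:hyper} \emph{simultaneously} with a restriction statement (Theorem \ref{thm:hyperrestr}) by induction on the rank, using the apparatus of blow-ups along codimension-two linear subspaces, the quadratic form $Q_{Y,\psi^*H}$, and Proposition \ref{prop:changepol-bar}: when the restriction to a general member of $|\psi^*H|$ is \emph{not} semistable, the induction hypothesis applied to the relative Harder--Narasimhan factors on the incidence variety yields the bound of Theorem \ref{thm:hyperrestr}, which contradicts the assumed negativity of the discriminant. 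None of this machinery is replaced by your one-line restriction claim.

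The second and more serious gap is the surface case with $d\geq 5$, the only range in which the statement does not already follow from Theorem \ref{thm:bgintro}. Your Frobenius dichotomy correctly disposes of strongly semistable sheaves, but for sheaves that are not strongly semistable you only outline a strategy (Langer's structure theorems, the $W_2$-lifting, instability of $\Omega_S$ via the conormal sequence) and you concede yourself that making the correction term vanish exactly, rather than merely for $p\gg 0$, is the real difficulty. That difficulty \emph{is} the theorem; as stated, your argument proves nothing in this case. The paper does not attempt this route at all: it quotes Theorem \ref{thm:hyper2} (that is, \cite[Corollary 6.4]{kos20b}), where $\overline{\Delta}_H(E)\geq 0$ on $S^2_d$ is established by a different method (tilt-stability on $D^b(S^2_d)$), extends it to blow-ups at points (Proposition \ref{prop:hyper2}), and then runs the induction described above to reach higher dimension. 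The portions of your write-up that are correct --- the Hodge-index comparison $\overline{\Delta}_H\geq H^2\Delta$ used for $d\leq 4$, the additivity of $\overline{\Delta}_H/\ch_0$ along a Jordan--H\"older filtration at equal slope, and the final observation when $\Pic(S^n_d)=\bZ[H]$ --- are the routine ones.
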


\subsection{Idea of proof}
One big part of the proof is 
the argument similar to the one in \cite{lan04}, 
where he proves the BG inequality in positive chararcteristic, 
with the modified term depending on $\ch_0(E)^4$. 
In addition to his arguments, 
the key fact we use is the``invariance" of the BG inequality 
under blow-ups and change of polarizations, 
discussed in \cite{blms17, lan16}. 
According to these invariance, 
we are able to proceed by induction on $\ch_0(E)$ 
in the birational equivalence class of a surface, 
rather than the whole category of surfaces. 
As a result, we obtain a quadratic inequality as in Theorem \ref{thm:bgintro}, 
which is necessary for the construction of 
Bridgeland stability conditions in Theorem \ref{thm:stabintro}. 
It would be worth mentioning that 
our proof simplifies Langer's original proof 
so that we do not need the careful analysis 
of slope semistability under the Frobenius pull-backs. 

For  higher dimension, 
to ensure the well-definedness of the constant $C_{X, H}$
 in Theorem \ref{thm:bghigherintro}, 
we use a result from the minimal model program 
for threefolds in positive characteristic. 

For Theorem \ref{thm:hyperintro}, 
besides Langer's induction argument, 
the key input is the BG inequality for surfaces $S^2_d$, 
which is proved in \cite{kos20b} 
using the theory of tilt-stability conditions on the derived categories. 

\subsection{Relation with the existing works}
Theorem \ref{thm:bgintro} was previously known 
when the Kodaira dimenstion $\kappa \leq 1$, 
except quasi-elliptic surfaces (\cite{lan16}). 
Our main contribution is the case of $\kappa=2$. 
See \cite{sun19} for the case of 
product type varieties. 
Note also that Shepherd-Barron \cite{sb91} 
obtains a similar result for rank two bundles on surfaces. 

In \cite{blms17}, the authors consider 
a variant of Langer's induction argument 
in the category of codimension two blow-ups 
of a given variety. 
The present paper is inspired by them. 

The construction of Bridgeland stability conditions 
on surfaces in positive characteristic 
has been an open problem 
since its appearance.

\subsection{Plan of the paper}
This paper is organized as follows. 
In Section \ref{sec:pre}, 
we recall the notion of slope stability, 
and the result from the minimal model program. 
In Section \ref{sec:surf}, 
we prove Theorem \ref{thm:bgintro}. 
In Section \ref{sec:high}, 
we prove Theorem \ref{thm:bghigherintro}. 
In Section \ref{sec:hyper}, we discuss 
the BG inequality for hypersurfaces in the projective spaces, 
and prove Theorem \ref{thm:hyperintro}. 
In Section \ref{sec:stab}, 
we recall the notion of Bridgeland stability conditions 
and prove Theorem \ref{thm:stabintro}.

\begin{ACK}
The author would like to thank Professor Arend Bayer
for useful discussions and comments, 
and Professor Adrian Langer 
for insightful comments. 
The author would also like to thank 
Masaru Nagaoka, Kenta Sato, 
and Professor Hiromu Tanaka 
for discussions related to MMP. 
In particular, the author learned the proof of Theorem \ref{thm:mmp} 
from Professor Hiromu Tanaka. 
The author was supported by 
ERC Consolidator grant WallCrossAG, no.~819864. 
\end{ACK}

\begin{NaC}
Throughout the paper, we work over an algebraically closed field of 
characteristic $p >0$. 
We use the following notations: 
\begin{itemize}
\item $\Coh(X)$: the category of coherent sheaves on a variety $X$. 
\item $D^b(X):=D^b(\Coh(X))$: the bounded derived category of coherent sheaves. 
\item $\ch^B=(\ch^B_0, \ch^B_1, \cdots, \ch^B_n):=e^{-B}.\ch$: 
the $B$-twisted Chern character 
for an $\bR$-divisor $B$. 
\end{itemize}
\end{NaC}

\section{Preliminaries} \label{sec:pre}
\subsection{Notations on slope stability}
Let $X$ be a smooth projective variety 
of dimension $n \geq 1$, 
defined over an algebraically closed field $k$ 
of characteristic $\cha(k)=p > 0$. 
We introduce the basic notions used in this paper: 

\begin{defin}
Let $D_1, \cdots, D_{n-1}$ be a collection of nef divisors 
such that the $1$-cycle $D_1 \dots D_{n-1}$ is numerically non-trivial. 
For a torsion free sheaf $E \in \Coh(X)$, 
we define the {\it $(D_1 \cdots D_{n-1})$-slope} of $E$ as 
\[
\mu_{D_1 \dots D_{n-1}}(E)
:=\frac{D_1 \dots D_{n-1}\ch_1(E)}{\ch_0(E)}. 
\]

We then define the notion of 
{\it $\mu_{D_1 \dots D_{n-1}}$-(semi)stability} 
(or {\it $(D_1 \dots D_{n-1})$-(semi)stability)}
in the usual way. 
When we have $H=D_1=\cdots=D_{n-1}$, 
we also call it as $\mu_H$-stability. 
\end{defin}

\begin{defin}
For a coherent sheaf $E \in \Coh(X)$, 
we define the {\it discriminant} of $E$ as follows:
\[
\Delta(E):=\ch_1(E)^2-2\ch_0(E)\ch_2(E). 
\]

For an ample divisor $H$ on $X$, we also define 
the {\it $H$-discriminant} as 
\[
\overline{\Delta}_H(E):=\left(H^{n-1}\ch_1(E) \right)^2-2H^n\ch_0(E)H^{n-2}\ch_2(E). 
\]
\end{defin}

\subsection{A result from minimal model program}
\begin{defin}
We say a morphism $f \colon X \to Z$ is a 
{\it projective contraction} if 
$X$ and $Z$ are quasi-projective varieties 
and $f$ is a projective morphism 
satisfying $f_*\mcO_X=\mcO_Z$. 
\end{defin}

We use the following result from minimal model program (MMP) 
for threefolds in positive characteristic, 
built on the works \cite{bir16, bw17, ct19, gnt19, hw19, hw20, hx15, tan20}: 

\begin{thm}
\label{thm:mmp}
Let $C$ be a smooth quasi-projective curve 
defined over an algebraically closed field of characteristic $p>0$. 
Let $f \colon X \to C$ be a projective contraction. 
Suppose that the following conditions hold: 
\begin{enumerate}
\item The variety $X$ is a $\bQ$-factorial terminal threefold, 
\item For every closed point $c \in C$, 
the fiber $X_c:=f^{-1}(c)$ is an irreducible surface, 
and the pair $(X, X_c)$ is plt. 
\end{enumerate}

Then we can run a $K_X$-MMP over $C$. 
The MMP terminates with a minimal model over $C$, 
or a Mori fiber space. 
\end{thm}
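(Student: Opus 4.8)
The plan is to reinterpret the $K_X$-MMP over $C$ as a \emph{log} MMP for a plt pair, so that the recent results on the MMP for threefolds in positive characteristic apply directly. I would fix an arbitrary closed point $c_0\in C$ and use that, since $f$ contracts to a curve, the reduced fiber $X_{c_0}$ is a positive rational multiple of the Cartier divisor $f^*(c_0)$, hence numerically trivial over $C$: $X_{c_0}\equiv_C 0$. Therefore $K_X+tX_{c_0}\equiv_C K_X$ for every $t$, so a curve contracted over $C$ is $K_X$-negative exactly when it is $(K_X+X_{c_0})$-negative, and the two MMPs over $C$ proceed through the same contractions. By hypothesis (2) the pair $(X, X_{c_0})$ is plt, so it is enough to run the $(K_X+X_{c_0})$-MMP over $C$; for this I would invoke the relative cone and contraction theorems and the existence of flips for $\bQ$-factorial plt threefolds over a quasi-projective base in characteristic $p>0$, as established in \cite{bir16, bw17, ct19, gnt19, hw19, hw20, hx15, tan20}.

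Then the program runs as follows. If $K_X$ is nef over $C$ there is nothing to prove. Otherwise there is a $(K_X+X_{c_0})$-negative extremal ray $R\subset\overline{NE}(X/C)$ with its contraction $\phi_R\colon X\to Y$ over $C$, and $R$ is $K_X$-negative as well. If $\dim Y<3$, then $\phi_R$ is a Mori fiber space over $C$ and we stop. Otherwise $\phi_R$ is birational; note $X_{c_0}$ is not contracted by $\phi_R$, since $X_{c_0}\cdot R=0$ while a $\phi_R$-contracted divisor meets $R$ negatively. If $\phi_R$ is divisorial, then $Y$ is again $\bQ$-factorial (the contracted ray has relative Picard number one) and terminal (terminality is preserved under $K_X$-MMP steps), $Y\to C$ is again a projective contraction, and $(Y, Y_{c_0})$ with $Y_{c_0}=\phi_{R*}X_{c_0}$ the reduced fiber over $c_0$ is plt (plt is preserved under $(K_X+X_{c_0})$-MMP steps); I would replace $X$ by $Y$. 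If $\phi_R$ is small, then the $(K_X+X_{c_0})$-flip $X\dashrightarrow X^{+}$ over $Y$ exists by the cited results, and for the same reasons $X^{+}$ is $\bQ$-factorial terminal, $X^{+}\to C$ is a projective contraction, and $(X^{+}, X^{+}_{c_0})$ is plt; I would replace $X$ by $X^{+}$. In either birational case the process continues.

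It then remains to prove termination. Divisorial contractions strictly drop $\rho(X/C)$, so only finitely many occur, and the question reduces to the termination of flips for terminal threefolds -- a purely birational statement about discrepancies: Kawamata's difficulty $d(X)=\#\{E\text{ exceptional over }X:a(E,X)<1\}$ is finite for a terminal threefold and strictly decreases under a flip, and the argument is characteristic-free. (Alternatively one could combine special termination for $(X, X_{c_0})$ with termination of the induced surface MMP on the normal surface $X_{c_0}$.) So the program stops after finitely many steps, either at a Mori fiber space over $C$ or at a model $X'\to C$ with $K_{X'}$ nef over $C$, i.e.\ a minimal model over $C$, which is the assertion.

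The formal scheme is routine once its inputs are granted; the real work -- and the reason for the hypotheses -- is in those inputs. The relative cone, contraction, and flip theorems for threefolds over a positive-dimensional base are delicate in positive characteristic, and especially in residue characteristics $p\le5$, where one must appeal to the refinements of \cite{ct19, gnt19, hw20}; hypotheses (1)--(2) serve precisely to place $(X, X_{c_0})$ in the class of pairs for which these are available, while adjunction (keeping $X_{c_0}$ normal) together with the negativity lemma ensure this class is preserved at every step. This is the step I expect to be the main obstacle.
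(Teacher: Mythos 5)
Your overall strategy is the one the paper follows: relative cone and contraction theorems from \cite{hw20}, existence of flips via the plt pair formed by $X$ together with a fiber, and termination of terminal threefold flips by the difficulty argument of \cite[Theorem 6.17]{km98} (for which one needs, as the paper notes via \cite[Corollary 3.10]{kol13}, that the singular locus of a terminal threefold in characteristic $p$ is zero-dimensional, so that $X$ is smooth at the generic point of the flipped curve). Recasting the program as a $(K_X+X_{c_0})$-MMP is harmless since $X_{c_0}\equiv_C 0$, so up to that repackaging the two arguments coincide.

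There is, however, one genuine gap: you fix a \emph{single} closed point $c_0$ at the outset and carry only the pair $(X,X_{c_0})$ through the program. For the cone and contraction theorems this is fine, and in characteristic $p\ge 5$ the existence of flips is unconditional (\cite{bir16, hx15, hw19}), so nothing is lost there. But in characteristic $2$ and $3$ the only available existence statement for the flip is \cite[Proposition 4.1]{hw20}, which requires the flipping curve $R$ to be \emph{contained in} the plt boundary divisor (here with boundary intersecting $R$ trivially). A flipping curve is vertical over $C$, hence lies in some fiber $X_o$, but in general $o\neq c_0$, and then $R\cap X_{c_0}=\emptyset$: the pair $(X,X_{c_0})$ carries no information along $R$ and the cited proposition does not apply. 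This is exactly why hypothesis (2) is stated for \emph{every} closed point $c\in C$: at each flipping step you must switch to the pair $(X,X_o)$ for the point $o$ whose fiber contains the flipping curve, check $X_o\cdot R=0$, apply \cite[Proposition 4.1]{hw20} to get the flip, and verify that $(X_1, f_1^{-1}(o))$ is again plt (and likewise for all other fibers) so that the hypotheses propagate. With that localization the argument closes and agrees with the paper's proof.
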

In particular, we can apply the above theorem 
when the morphism $f \colon X \to C$ is 
smooth of relative dimension two. 

\begin{proof}
When $p \geq 5$, 
the assertion directly follows from \cite[Theorem 1.2]{hw19}. 
We give a proof based on \cite{hw20}, 
which works in an arbitrary characteristic. 


\Step1(Cone theorem and Contraction theorem)
By the same argument as in \cite[Proof of Theorem 1.6]{hw20}, 
Cone theorem and Contraction theorem hold 
for the morphism $X \to C$. 
Hence if $X$ is not a minimal model over $C$, 
then there exists the contraction 
$\phi \colon X \to \overline{X}$ 
(over C) of a $K_X$-negative extremal ray. 
Note that if it is a divisorial contraction, 
then the variety $X_1:=\overline{X}$ satisfies 
the conditions (1) and (2). 

\Step2(Existence of a flip)
If $\phi$ is a flippling contraction, 
let $R$ be the curve contracted by the morphism $\phi$. 
Then $R$ is contained in an $f$-fiber $X_o:=f^{-1}(o)$ 
for some closed point $o \in C$. 
By assumption (2), the pair $(X, X_o)$ is plt and we have $X_o.R=0$. 
Hence by \cite[Proposition 4.1]{hw20}, 
there exists a flip $f_1 \colon X_1 \to C$ of $\phi$. 
Furthermore, the pair $(X_1, f_1^{-1}(o))$ is again plt. 
Hence $X_1$ satisfies the conditions (1) and (2). 

\Step3(Termination)
It remains to show the finiteness of terminal flips. 
By \cite[Corollary 3.10]{kol13}, 
the singular locus of a terminal threefold is zero dimensional. 
Hence we can check that the proof of \cite[Theorem 6.17]{km98} 
works also in positive characteristic. 
We conclude that the program ends with finite steps. 
\end{proof}

\section{BG inequality on surfaces} \label{sec:surf}
The goal of this section is to prove Theorem \ref{thm:bgintro}. 

\subsection{Statement of theorems}

We use the following terminology:
\begin{defin}
Let $S$ be a smooth projective surface. 
We say that a divisor $D$ on $S$ is 
a {\it birational pull-back of a very ample divisor} 
if there exists a birational morphism 
$\phi \colon S \to \overline{S}$ to 
a normal projective surface $\overline{S}$ 
and a very ample divisor $A$ on $\overline{S}$ 
such that $D=\phi^*A$. 
We define $\BD(S)$ to be a set of 
all birational pull-backs of very ample divisors. 
\end{defin}

We define a modified version of the discriminant as follows: 
\begin{defin} \label{defin:constant}
Let $T$ be a smooth projective surface. 
We define a constant $C_{[T]}$, which only depends on 
the birational equivalence class of $T$, as follows: 
\begin{enumerate}
\item If the Kodaira dimension $\kappa(S)=2$, 
let $S$ be the minimal model of $T$. 
We set 
\begin{align*}
&d_{[T]}:=\min\left\{K_S.H \colon H \in \BD(S),  H^2 \geq K_SH \right\},\\
&C_{[T]}:=d_{[T]}-\chi(\mcO_S)+2. 
\end{align*}
\item If $\kappa(T)=1$ and $T$ is quasi-elliptic, 
we set $C_{[T]}:=2-\chi(\mcO_T)$. 
\item Otherwise, we set $C_{[T]}:=0$. 
\end{enumerate}

We then define 
\[
\widetilde{\Delta}_{[T]}:=\Delta+C_{[T]}\ch_0^2. 
\]
\end{defin}

\begin{rmk} \label{rmk:constant.v.a.}
Let $S$ be a minimal surface of general type. 
Then the divisor $5K_S$ is very ample 
considered as a divisor on the canonical model of $S$. 
Hence we have $5K_S \in \BD(S)$ and the inequality 
$C_{[S]} \leq 5K_S^2-\chi(\mcO_S)+2$ 
holds. 
\end{rmk}

\begin{ex} \label{ex:hyper}
Let $S:=S^2_d \subset \bP^3$ be a hypersurface of degree $d \geq 5$. 
Denote by $H$ the restriction of a hyperplane class on $\bP^3$. 
Then we have $K_S=(d-4)H$ 
and it is easy to compute that 
\[
C_{[S^2_d]}=\frac{5}{6}d^3-7d^2+\frac{85}{6}d+2,
\]
which is positive for $d \geq 5$. 
\end{ex}

We prove the following results: 
\begin{thm} \label{thm:bg1}
Let $T$ be a smooth projective surface. 
Then for every numerically non-trivial nef divisor $L$ on $T$ 
and $\mu_L$-semistable torsion free sheaf $E \in \Coh(T)$, 
we have 
\[
\widetilde{\Delta}_{[T]}(E) \geq 0. 
\]
\end{thm}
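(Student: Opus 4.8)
The plan is to prove Theorem~\ref{thm:bg1} by induction on the rank $r=\ch_0(E)$, working \emph{inside a fixed birational equivalence class} of surfaces rather than over all surfaces at once. The base case $r=1$ is immediate: a torsion free rank one sheaf $E$ satisfies $\Delta(E)\geq 0$ (indeed $\Delta(E)$ of a line bundle is zero, and twisting down to a line bundle only decreases $\ch_2$), so $\widetilde{\Delta}_{[T]}(E)=\Delta(E)+C_{[T]}\geq 0$ since $C_{[T]}\geq 0$. For the inductive step, fix $r\geq 2$ and suppose the inequality holds for all smooth projective surfaces in the birational class of $T$ and all $\mu$-semistable torsion free sheaves of rank $<r$. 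If $E$ is $\mu_L$-semistable but $\mu$-\emph{unstable} for a suitable perturbation, or more precisely if $E$ is not $\mu$-stable, I would use a Jordan--H\"older / Harder--Narasimhan type decomposition into lower rank pieces and use the standard fact that $\Delta$ of an extension of semistable sheaves of the same slope is bounded below by the sum of the $\Delta$'s of the factors (plus nonnegative intersection terms), combined with the inductive hypothesis and the superadditivity $C_{[T]}(r_1+r_2)^2\geq C_{[T]}r_1^2+C_{[T]}r_2^2$; this disposes of the non-stable case.

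So the heart of the matter is a $\mu_L$-\emph{stable} torsion free sheaf $E$ of rank $r\geq 2$ on $T$. Here I would run Langer's argument from \cite{lan04}: replace $L$ by a carefully chosen ample (or nef) polarization, pass to Frobenius pull-backs $F^{m*}E$ to detect instability, and extract from the restriction theorem / the boundedness of the instability of Frobenius pull-backs an inequality of the shape $\Delta(E)\geq -(\text{something involving }r^2\text{ and numerical invariants of }T)$. The key new ingredient — following \cite{blms17,lan16} — is that both $\Delta$ and the relevant stability notions behave well under (i) blow-ups $\pi\colon T'\to T$, since $\Delta(\pi^*E)=\Delta(E)$ and $\mu$-semistability is preserved, and (ii) changes of polarization within $\BD(S)$. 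This is precisely what lets the induction stay within the birational class: when Langer's estimate produces a term like $K_{S}\cdot H$ or $K_S^2$, I can minimize over $H\in\BD(S)$ with $H^2\geq K_SH$, which is exactly the definition of $d_{[T]}$, and pass to the minimal model $S$ of $T$ to make the numerical invariants $\chi(\mcO_S)$, $K_S^2$ birational invariants. The three cases of Definition~\ref{defin:constant} ($\kappa=2$; $\kappa=1$ quasi-elliptic; everything else) correspond to the three regimes of Langer's theorem: when $\kappa(S)\leq 1$ and $S$ is not quasi-elliptic, \cite{lan16} already gives $\Delta(E)\geq 0$ so $C_{[T]}=0$ works; the quasi-elliptic $\kappa=1$ case needs the correction $2-\chi(\mcO_S)$; and the general type case needs $d_{[T]}-\chi(\mcO_S)+2$.

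The main obstacle I anticipate is making the induction \emph{close up} quantitatively: Langer's raw estimate in \cite{lan04} carries a correction term that grows like $r^4$ (equivalently $\ch_0(E)^4$), and the whole point of Theorem~\ref{thm:bg1} is to get a correction of the form $C_{[T]}\ch_0(E)^2$ with $C_{[T]}$ \emph{independent of $r$}. The mechanism for the improvement must be the induction itself: when $F^{m*}E$ is destabilized, its HN factors have rank $<r$, so the inductive hypothesis gives them the \emph{quadratic} bound $C_{[T]}$, and one feeds this back — together with the blow-up invariance to resolve the instability locus and the freedom to re-choose the polarization in $\BD(S)$ — to show that $E$ itself satisfies the quadratic bound with the \emph{same} constant. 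Getting the bookkeeping of ranks, Frobenius degrees, and the intersection numbers $K_S\cdot H$, $H^2$ to balance so that no worse-than-quadratic term survives, and checking that the minimization defining $d_{[T]}$ is attained and gives a birational invariant, is the delicate part; I expect to need Bogomolov-type inequalities for the HN factors on the blow-up and a comparison of $\mu_L$-semistability on $T$ with $\mu_H$-semistability on the minimal model under the birational morphism. Once that is in place, the statement for an arbitrary numerically non-trivial nef $L$ (as opposed to an ample one) follows by a limiting / openness argument, approximating $L$ by ample classes and using that $\mu_L$-semistability is an open-type condition together with the fact that $\widetilde{\Delta}_{[T]}(E)$ is a fixed number.
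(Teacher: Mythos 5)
Your scaffolding matches the paper's: a double induction in which a BG-type statement $T^1(r)$ and a restriction statement $T^2(r)$ (Theorem \ref{thm:restr}) feed into each other, with blow-up invariance and change of polarization (Propositions \ref{prop:deform} and \ref{prop:minimal}) used to stay inside a fixed birational class and to reduce to the minimal model. The step $T^1(r-1)\Rightarrow T^2(r)$ you describe (pencil, incidence variety, relative HN filtration, Hodge index) is also the paper's. But at the decisive step --- deducing the rank-$r$ inequality once the restriction theorem in rank $r$ is available --- you propose to run Langer's Frobenius pull-back analysis from \cite{lan04}, and this is precisely where the proposal has a genuine gap. You yourself flag that Langer's Frobenius estimate produces a correction of order $\ch_0(E)^4$ and that you do not see how to make the ``bookkeeping of ranks, Frobenius degrees, and intersection numbers'' close up quadratically with a constant independent of $r$; the paper's whole point (stated explicitly in the introduction) is that one can and should \emph{avoid} Frobenius pull-backs altogether. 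No argument is given that the Frobenius route yields the quadratic bound, and iterating Frobenius is exactly the mechanism that produces the quartic term in \cite{lan04}, so as written the induction does not close.

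The missing idea is an Euler-characteristic argument. Assuming $T^2(r)$ and $\widetilde{\Delta}_{[S]}(E)<0$ for a $\mu_L$-stable $E$ of rank $r$ on the minimal model $S$, with $L\in\BD(S)$ chosen so that $K_SL=d_{[S]}$ (this is where the normalization $H^2\geq K_SH$ in the definition of $d_{[T]}$ enters, to control $\hom(E,E(K_S-L))$), the restriction $E|_C$ to a general $C\in|L|$ is semistable. Then Serre duality and Riemann--Roch give
\[
\chi(\mcO_S)r^2-\Delta(E)=\chi(E,E)\leq 1+\hom\left(E,E(K_S)\right),
\]
and $\hom(E,E(K_S))$ is bounded by $1+(d_{[S]}+1)r^2$ using the exact sequence $0\to E(K_S-L)\to E(K_S)\to E(K_S)|_C\to 0$ together with the elementary bound $\hom(E|_C,E|_C(d))\leq (d+1)r^2$ for semistable bundles on the curve $C$ (Lemma \ref{lem:hombound}). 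This yields $\Delta(E)+(d_{[S]}-\chi(\mcO_S)+1)r^2+2\geq 0$, the desired contradiction, with no Frobenius anywhere. The same $\chi(E,E)$ trick (with $\mu_{K_S}$-semistability) handles the quasi-elliptic $\kappa=1$ case, which your proposal asserts but does not prove. Your base case and your treatment of the strictly semistable case via Lemma \ref{lem:strss} are fine.
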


\begin{thm} \label{thm:restr}
Let $T$ be a smooth projective surface. 
$L$ a birational pull-back of a very ample divisor. 
Let $E \in \Coh(T)$ be a $\mu_L$-semistable torsion free sheaf  on $T$. 
Assume that for any general member $C \in |L|$, 
the restriction $E|_D$ is not slope semistable. 
Then we have 
\[
\sum_{i<j}r_ir_j(\mu_i-\mu_j)^2 
\leq L^2 \cdot \widetilde{\Delta}_{[T]}(E), 
\]
where we denote by $r_i, \mu_i$ the ranks and the slopes 
of the Hardar-Narasimhan factors of $E|_C$. 
\end{thm}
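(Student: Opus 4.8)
The plan is to prove Theorem \ref{thm:restr} --- and, interwoven with it, Theorem \ref{thm:bg1} --- by induction on the rank $r:=\ch_0(E)$; when treating rank $r$ we assume that $\widetilde{\Delta}_{[T']}(F)\ge 0$ holds for every $\mu$-semistable torsion free sheaf $F$ of rank $<r$ on every smooth projective surface $T'$ birational to $T$. The point of allowing all such $T'$ is that the constant $C_{[T]}$ of Definition \ref{defin:constant} is a birational invariant, so the inductive hypothesis survives any blow-up or contraction; this is what permits the induction to run in the birational class. If $r=1$ then $E|_C$ is automatically semistable and the left hand side of the asserted inequality is $0$, so we may assume $r\ge 2$.

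The first step is to choose $C$ conveniently: writing $L=\phi^*A$ for a birational morphism $\phi\colon T\to\overline{T}$ to a normal surface and a very ample $A$ on $\overline{T}$, and using that $\mathrm{Sing}(\overline{T})$ is finite, a general $\overline{C}\in|A|$ is a smooth curve avoiding $\mathrm{Sing}(\overline{T})$, so that $C:=\phi^{-1}(\overline{C})\cong\overline{C}$ is smooth, disjoint from $\Exc(\phi)$, and has normal bundle $\mcO_C(C)$ of degree $L^2$. The heart of the argument is to control how the Harder--Narasimhan filtration $0=G_0\subset G_1\subset\cdots\subset G_s=E|_C$ varies as $C$ moves: forming the relative Harder--Narasimhan filtration over a dense open $U\subseteq|A|$, one extracts for each $i$ a saturated subsheaf $\Gamma_i\subseteq E$ with $\Gamma_0=0\subseteq\Gamma_1\subseteq\cdots\subseteq\Gamma_s=E$ such that $\Gamma_i|_C$ is a subsheaf of $G_i$ for general $C$; the crucial estimate is that the torsion cokernels $G_i/(\Gamma_i|_C)$ --- which measure the failure of the $G_i$ to globalize (the Grauert--M\"ulich phenomenon) --- have degree bounded in terms of $L^2=\deg\mcO_C(C)$ together with the discriminants of the successive quotients $\Gamma_i/\Gamma_{i-1}$, the latter being controlled by the inductive hypothesis since these quotients have rank $<r$. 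Applying $\mu_L$-semistability of $E$ to each inclusion $\Gamma_i\subseteq E$ gives $\mu_L(\Gamma_i)\le\mu_L(E)$, and combined with the cokernel bound this pins down how far the Harder--Narasimhan polygon of $E|_C$ --- whose invariant $\sum_{i<j}r_ir_j(\mu_i-\mu_j)^2$ is precisely what we must bound --- can rise above the chord determined by $\ch_0(E)$ and $\ch_1(E)\cdot L$.

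To conclude, I would iterate along $\Gamma_\bullet$ the two-step additivity of the discriminant,
\[
\Delta(E)=\frac{r}{a}\Delta(A)+\frac{r}{b}\Delta(B)-\frac{1}{ab}\bigl(b\,\ch_1(A)-a\,\ch_1(B)\bigr)^2\qquad\text{for }0\to A\to E\to B\to 0,\ a+b=r,
\]
feeding in $\widetilde{\Delta}_{[T]}(\Gamma_i/\Gamma_{i-1})\ge 0$ for each graded piece (after refining by its own Harder--Narasimhan filtration on $T$ if it is not $\mu_L$-semistable) and the Hodge index theorem on $T$ with respect to the nef class $L$; the Hodge index inequality turns the correction terms into multiples of $(\mu_i-\mu_j)^2$ divided by $L^2$, and rearranging produces $\sum_{i<j}r_ir_j(\mu_i-\mu_j)^2\le L^2\cdot\widetilde{\Delta}_{[T]}(E)$.

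The main obstacle is the middle step: one must bound the degrees of the Grauert--M\"ulich cokernels $G_i/(\Gamma_i|_C)$ against $L^2$ and against the discriminants of the graded pieces with \emph{exactly} the constants that make the final Hodge index bookkeeping collapse to the single term $L^2\cdot\widetilde{\Delta}_{[T]}(E)$ with no slack. This effective restriction estimate is the technical core, essentially Langer's argument in \cite{lan04}. One must also handle the degenerate possibilities --- graded pieces of $\Gamma_\bullet$ that fail to be semistable, subsheaves $\Gamma_i$ of rank less than $r_1+\cdots+r_i$, and the behaviour of the construction over $\Exc(\phi)$ --- the last of these being absorbed by the freedom to blow up, using again that $C_{[T]}$ is a birational invariant.
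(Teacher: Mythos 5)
Your overall strategy --- a general pencil in $|L|$, the relative Harder--Narasimhan filtration, and induction on the rank feeding the BG inequality for the graded pieces into a Hodge-index computation --- is exactly the route the paper takes (it is Langer's argument from \cite{lan04}, Subsection 3.9). But the decisive step is missing: you correctly identify that everything hinges on bounding the defect between the curve-level HN filtration and its globalization ``with exactly the constants that make the final Hodge index bookkeeping collapse,'' and then you declare this to be ``the technical core'' and defer it. That computation \emph{is} the proof of the theorem; the paper carries it out in full in about a page (the chain of estimates beginning with $\widetilde{\Delta}_{[\widetilde{T}]}(F_i)\ge 0$, the expression for $\mu_i$ in terms of $M_iL+b_id$, the bound $\sum_{j\le i}b_jd\ge\sum_{j\le i}r_j(\mu_j-\mu_L(E))$ from semistability of $E$, and the Abel-summation identity at the end). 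A proof that black-boxes precisely this step has not proved the statement.

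There is also a structural problem with the packaging you propose. The paper blows up the $d=L^2$ base points of the pencil to get $q\colon\widetilde{T}\to T$ with a fibration $p\colon\widetilde{T}\to\Lambda$; the relative HN filtration is then an honest filtration $E_\bullet\subset q^*E$ whose factors $F_i$ are $\mu_f$-semistable for the nef fiber class $f$, so the inductive hypothesis $T^1(r-1)$ applies to them \emph{directly} (Theorem \ref{thm:bg1} is stated for any numerically non-trivial nef polarization, and $\widetilde{\Delta}_{[\widetilde{T}]}=\widetilde{\Delta}_{[T]}$ by birational invariance of the constant). The ``Grauert--M\"ulich defect'' is recorded cleanly in the exceptional-divisor coefficients $b_{ij}$ of $\ch_1(F_i)$, and $\mu_L$-semistability of $E$ applied to $q_*E_i\subset E$ gives the required one-sided bound on these coefficients. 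You instead descend to saturated subsheaves $\Gamma_i\subseteq E$ on $T$ and propose, when $\Gamma_i/\Gamma_{i-1}$ fails to be $\mu_L$-semistable, to ``refine it by its own HN filtration on $T$.'' At that point the ranks and slopes of the refined pieces are no longer the $(r_i,\mu_i)$ of $E|_C$, so the quantity $\sum_{i<j}r_ir_j(\mu_i-\mu_j)^2$ you must bound is no longer expressible through your filtration, and the bookkeeping does not close. Working on $\widetilde{T}$ with the $\mu_f$-semistable factors removes the need for any refinement and for any separate torsion-cokernel estimate.
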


When $\kappa(T) \leq 1$, 
the results essentially follows from \cite[Theorem 7.1]{lan16}. 
For surfaces of general type, 
we prove the above theorems at the same time 
by induction on $\ch_0(E)$, 
following the idea of Langer \cite{lan04} 
(see also \cite{blms17}). 
Note that the assertions are trivial 
when $\ch_0(E)=1$. 
Let $T^1(r)$ (resp. $T^2(r)$) be the statement that 
Theorem \ref{thm:bg1} (resp. \ref{thm:restr}) holds 
when $\ch_0(E) \leq r$. 
We prove that $T^2(r)$ implies $T^1(r)$, 
and that $T^1(r-1)$ implies $T^2(r)$.

\subsection{Invariance of the BG inequality}
In this subsection, 
we recall some results concerning 
the invariance of the BG inequality 
under the blow-ups and the change of polarizations, 
which are essentially proved in \cite{lan16}. 

\begin{lem} \label{lem:strss}
Let $T$ be a smooth projective surface, 
$L$ a numerically non-trivial nef divisor. 
Suppose that we have an exact sequence 
\[
0 \to E_1 \to E \to E_2 \to 0
\]
of torsion free sheaves on $T$ 
with $\mu_L(E_1)=\mu_L(E)=\mu_L(E_2)$. 
Then we have 
\[
\frac{\widetilde{\Delta}_{[T]}(E)}{\ch_0(E)} 
\geq \frac{\widetilde{\Delta}_{[T]}(E_1)}{\ch_0(E_1)}
+\frac{\widetilde{\Delta}_{[T]}(E_2)}{\ch_0(E_2)}. 
\]
\end{lem}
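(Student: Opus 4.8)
The plan is to reduce everything to the classical Hodge-index-type estimate that already lies behind the usual Bogomolov-Gieseker discriminant, applied to the exact sequence $0 \to E_1 \to E \to E_2 \to 0$. First I would recall the elementary identity relating the discriminant of an extension to the discriminants of the sub and quotient: writing $r = \ch_0(E)$, $r_i = \ch_0(E_i)$ and using $\ch_1(E) = \ch_1(E_1) + \ch_1(E_2)$, $\ch_2(E) = \ch_2(E_1) + \ch_2(E_2)$, a direct computation gives
\[
\frac{\Delta(E)}{r} - \frac{\Delta(E_1)}{r_1} - \frac{\Delta(E_2)}{r_2}
= -\frac{1}{r\, r_1 r_2}\bigl(r_2 \ch_1(E_1) - r_1 \ch_1(E_2)\bigr)^2.
\]
So the inequality for $\Delta$ alone amounts to showing that the class $\xi := r_2\ch_1(E_1) - r_1\ch_1(E_2) \in \NS(T)_{\bR}$ has $\xi^2 \le 0$. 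The hypothesis $\mu_L(E_1) = \mu_L(E_2)$ says exactly that $L.\xi = 0$, i.e. $\xi$ is orthogonal to the nef class $L$; since $L$ is numerically non-trivial and nef on a surface, the Hodge index theorem (in the form that the intersection form is negative semi-definite on $L^{\perp}$) gives $\xi^2 \le 0$, which is what we want.

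Next I would handle the correction term $C_{[T]}\ch_0^2$. Because $C_{[T]}$ depends only on the birational class of $T$, it is the \emph{same} constant for $E$, $E_1$, $E_2$, so
\[
\frac{C_{[T]}\ch_0(E)^2}{\ch_0(E)} - \frac{C_{[T]}\ch_0(E_1)^2}{\ch_0(E_1)} - \frac{C_{[T]}\ch_0(E_2)^2}{\ch_0(E_2)}
= C_{[T]}\bigl(r - r_1 - r_2\bigr) = 0,
\]
using $r = r_1 + r_2$ (the ranks add in a short exact sequence of torsion free sheaves). Adding this to the previous paragraph gives
\[
\frac{\widetilde{\Delta}_{[T]}(E)}{\ch_0(E)} - \frac{\widetilde{\Delta}_{[T]}(E_1)}{\ch_0(E_1)} - \frac{\widetilde{\Delta}_{[T]}(E_2)}{\ch_0(E_2)}
= -\frac{\xi^2}{r\, r_1 r_2} \ge 0,
\]
which is the claim. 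One should note $C_{[T]} \ge 0$ is never actually used here; the constant simply drops out, which is the reason this ``superadditivity'' statement is stable under passing to the modified discriminant.

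I do not expect a serious obstacle: the only non-formal ingredient is the Hodge index theorem on the surface $T$ applied to the orthogonal complement of the nef class $L$, and the only subtlety worth a sentence is the reduction from ``$L$ numerically non-trivial nef'' to a setting where Hodge index applies (e.g. replacing $L$ by an ample class in a neighbourhood, or invoking the standard fact that the intersection pairing is negative semi-definite on $c^{\perp}$ for any nonzero nef $c$ on a smooth projective surface). The arithmetic identity for $\Delta$ of an extension is routine and I would present it compactly rather than expanding every term.
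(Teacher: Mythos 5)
Your proof is correct and follows essentially the same route as the paper: the same exact identity expressing $\widetilde{\Delta}_{[T]}(E)/\ch_0(E)-\sum_i\widetilde{\Delta}_{[T]}(E_i)/\ch_0(E_i)$ as a negative multiple of the square of the class $r_2\ch_1(E_1)-r_1\ch_1(E_2)$ (equivalently, of the slope-difference class used in the paper), followed by the Hodge index theorem on the orthogonal complement of the numerically non-trivial nef class $L$. Your explicit remark that the $C_{[T]}\ch_0^2$ term cancels because ranks are additive is implicit in the paper's computation but is the same observation.
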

\begin{proof}
By the assumption, we have 
$\left(\frac{\ch_1(E_1)}{\ch_0(E_1)}-\frac{\ch_1(E_2)}{\ch_0(E_2)} \right)L=0$. 
Hence by the Hodge index theorem, 
we have 
\begin{align*}
\frac{\widetilde{\Delta}_{[T]}(E)}{\ch_0(E)} 
&=\frac{\widetilde{\Delta}_{[T]}(E_1)}{\ch_0(E_1)}
+\frac{\widetilde{\Delta}_{[T]}(E_2)}{\ch_0(E_2)} 
-\frac{\ch_0(E_1)\ch_0(E_2)}{\ch_0(E)}
\left(\frac{\ch_1(E_1)}{\ch_0(E_1)}-\frac{\ch_1(E_2)}{\ch_0(E_2)} \right)^2 \\
&\geq \frac{\widetilde{\Delta}_{[T]}(E_1)}{\ch_0(E_1)}
+\frac{\widetilde{\Delta}_{[T]}(E_2)}{\ch_0(E_2)}. 
\end{align*}
\end{proof}

\begin{prop}[{\cite[Proposition 6.2]{lan16}}] \label{prop:deform}
Let $T$ be a smooth projective surface, 
$r \geq 2$ an integer. 
Suppose that $T^1(r)$ holds 
for some numerically non-trivial nef divisor $L$. 
Then it also holds 
for every choice of numerically non-tirivial nef divisors $M$. 
\end{prop}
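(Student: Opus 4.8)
The plan is to reduce the semistability with respect to a general nef divisor $M$ to the semistability with respect to $L$ by a standard limiting/openness argument on the ample (or nef) cone, so that the inequality $\widetilde{\Delta}_{[T]}(E) \geq 0$—which does not depend on the polarization at all—can be transported. First I would recall that the quantity $\widetilde{\Delta}_{[T]}(E) = \Delta(E) + C_{[T]}\ch_0(E)^2$ is a purely numerical invariant of $E$: it involves only $\ch_0, \ch_1, \ch_2$ and the birational constant $C_{[T]}$, and in particular is independent of the chosen polarization. Hence the content of the statement is entirely about which sheaves $E$ are forced into the hypothesis ``$\mu_M$-semistable'' versus ``$\mu_L$-semistable''. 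The key point from \cite[Proposition 6.2]{lan16} is that, because of the boundedness of the family of potentially destabilizing subsheaves, the locus of nef classes for which a given torsion free sheaf $E$ of fixed numerical type is semistable is cut out by countably many linear (wall) conditions, and one can connect $L$ and $M$ inside the nef cone.

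The key steps, in order, are as follows. First I would fix $E$ with $\ch_0(E) \leq r$ and a numerically non-trivial nef divisor $M$; we want $\widetilde{\Delta}_{[T]}(E) \geq 0$ assuming $E$ is $\mu_M$-semistable. Second, I would invoke boundedness (Grothendieck / the theory of the Harder--Narasimhan filtration for $\mu_L$ as in \cite{lan04, lan16}): the $\mu_L$-HN factors of $E$ form a sheaf filtration with bounded numerical invariants, so only finitely many numerical types of quotients can ever destabilize $E$ with respect to nef classes in a bounded region. Third, I would argue that, after replacing $M$ by an $\mathbb{R}$-divisor on the segment $[L, M] \subset \mathrm{Nef}(T)$ if necessary, one may choose a nef $\mathbb{Q}$-divisor $M'$ arbitrarily close to $M$, still numerically non-trivial, with the property that $\mu_{M'}$-semistability of $E$ implies $\mu_L$-semistability: indeed if $E$ were $\mu_L$-unstable, take the $\mu_L$-maximal destabilizing subsheaf $F \subset E$; then $\mu_L(F) > \mu_L(E)$ is a strict inequality among rational numbers with bounded denominators, so it persists for all nef classes in a neighborhood of $L$, and by walking along the segment one reaches a contradiction with $\mu_M$-semistability (this is where one uses that crossing only finitely many walls separates $L$ from $M$). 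Fourth, having reduced to $\mu_L$-semistability, apply the hypothesis $T^1(r)$ for $L$ to conclude $\widetilde{\Delta}_{[T]}(E) \geq 0$.

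The main obstacle I anticipate is the wall-and-chamber bookkeeping in the third step: one must ensure that along the path from $L$ to $M$ in the nef cone, the relevant family of destabilizing subsheaves stays bounded (so that only finitely many walls intervene) and that at each wall-crossing one genuinely produces a sheaf that is either $\mu_L$-semistable and $\mu_M$-destabilized or vice versa, feeding the induction correctly; degenerate behavior at the boundary of the nef cone (e.g.\ $M$ on a face where $M^{n-1}$ becomes numerically trivial on some subvariety) needs to be excluded by the numerical non-triviality hypothesis. Since this is precisely the argument of \cite[Proposition 6.2]{lan16}, I would structure the proof as a careful transcription of that argument, emphasizing only that the modified discriminant $\widetilde{\Delta}_{[T]}$ is polarization-independent so that no new term appears, and that the induction hypothesis $T^1(r)$ is used only at the very end once semistability has been transported to $L$.
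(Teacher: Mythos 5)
Your third step contains a genuine gap, and it is exactly the point where the real work of the proposition lies. You claim that (possibly after perturbing along the segment $[L,M]$) $\mu_{M'}$-semistability of $E$ forces $\mu_L$-semistability, arguing that if $F\subset E$ were $\mu_L$-destabilizing then $\mu_L(F)>\mu_L(E)$ persists near $L$ and ``walking along the segment one reaches a contradiction with $\mu_M$-semistability.'' There is no such contradiction: persistence of the destabilizing inequality near $L$ only tells you that $E$ is unstable for polarizations close to $L$, which is perfectly compatible with $E$ being $\mu_M$-semistable for $M$ on the other side of a wall. Slope semistability genuinely depends on the polarization (this is why the nef cone has a nontrivial wall-and-chamber structure), so a $\mu_M$-semistable sheaf need not be $\mu_L$-semistable in any form, and the hypothesis $T^1(r)$ for $L$ cannot be applied to $E$ directly in that case. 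Finiteness of walls does not repair this; it only guarantees that the transition happens at finitely many parameter values.

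What the paper does instead --- and what your argument is missing --- is an induction on $\ch_0(E)$ combined with a convexity statement for the modified discriminant. Set $M_t:=tM+(1-t)L$. If $E$ is $\mu_M$-semistable but not $\mu_L$-semistable, there is some $t\in(0,1)$ at which $E$ becomes \emph{strictly} $\mu_{M_t}$-semistable, giving a short exact sequence $0\to E_1\to E\to E_2\to 0$ of $\mu_{M_t}$-semistable sheaves of strictly smaller rank with $\mu_{M_t}(E_1)=\mu_{M_t}(E)=\mu_{M_t}(E_2)$. The induction hypothesis (the proposition itself in lower rank, for the nef divisor $M_t$) gives $\widetilde{\Delta}_{[T]}(E_i)\ge 0$, and Lemma \ref{lem:strss} --- a Hodge-index-theorem computation showing $\widetilde{\Delta}_{[T]}(E)/\ch_0(E)\ge \sum_i\widetilde{\Delta}_{[T]}(E_i)/\ch_0(E_i)$ for such equal-slope extensions --- then yields $\widetilde{\Delta}_{[T]}(E)\ge 0$. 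Your observation that $\widetilde{\Delta}_{[T]}$ is polarization-independent is correct but does not by itself transport the hypothesis class; you need the decomposition at the wall and the superadditivity of $\widetilde{\Delta}_{[T]}/\ch_0$ to bridge the two polarizations.
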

\begin{proof}
Let $E$ be a $\mu_M$-semistable torsion free sheaf 
with $\ch_0(E) \leq r$. 
We prove the assertion by induction on $\ch_0(E)$. 
For $\ch_0(E)=1$, the assertion is trivial, 
so assume that $2 \leq \ch_0(E) \leq r$. 
Let us put $M_t:=tM+(1-t)L$ for $t \in [0, 1]$. 
There are two possible cases. 
If $E$ is $\mu_L$-semistable, 
then the assertion follows from $T^1(r)$ for $L$. 

If $E$ is not $\mu_L$-semistable, 
then there exists a real number $t \in (0, 1)$ such that 
$E$ is strictly $\mu_{M_t}$-semistable. 
Hence there exists an exact sequence 
\[
0 \to E_1 \to E \to E_2 \to 0
\]
of torsion free $\mu_{M_t}$-semistable sheaves with 
$\mu_{M_t}(E_1)=\mu_{M_t}(E)=\mu_{M_t}(E_2)$. 
Now by the induction hypothesis and Lemma \ref{lem:strss}, 
we get the result. 
\end{proof}

\begin{prop}[{\cite[Proposition 6.5]{lan16}}] \label{prop:minimal}
Take an integer $r \geq 2$. 
Let $S$ be a smooth projective surface, 
$L$ a numerically non-trivial nef divisor on $S$. 
Let $\psi \colon T \to S$ be the blow-up at points. 
Suppose that $T^1(r)$ holds for $(S, L)$. 
Then it also holds for $(T, \psi^*L)$. 
\end{prop}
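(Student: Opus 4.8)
The plan is to reproduce (a mild strengthening of) the proof of \cite[Proposition 6.5]{lan16}, the only new point being that the corrective constant is a birational invariant. Since $T$ is a blow-up of $S$, the two surfaces are birationally equivalent, so $C_{[T]}=C_{[S]}$ by Definition \ref{defin:constant}; hence $\widetilde{\Delta}_{[T]}$ and $\widetilde{\Delta}_{[S]}$ are literally the same function of the Chern character, and it suffices to prove $\widetilde{\Delta}_{[S]}(E)\ge 0$ for every $\mu_{\psi^{*}L}$-semistable torsion free $E\in\Coh(T)$ with $\ch_{0}(E)\le r$. Writing $\psi$ as a composition of blow-ups at single points and inducting on the number of points — each step applies $T^{1}(r)$ for the surface just below it — I may assume $\psi\colon T\to S$ is the blow-up at one point, with exceptional curve $e$, $e^{2}=-1$. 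Two harmless reductions: replacing $E$ by $E^{\vee\vee}$ leaves $\ch_{0},\ch_{1}$ and $\mu_{\psi^{*}L}$-semistability unchanged and can only increase $\ch_{2}$, hence can only decrease $\widetilde{\Delta}_{[S]}(E)$, so I may assume $E$ locally free; and since $\psi^{*}L\cdot e=0$, tensoring $E$ by $\mcO_{T}(je)$ changes neither $\mu_{\psi^{*}L}$-semistability nor $\widetilde{\Delta}_{[S]}(E)$, a freedom I use below.

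Next I would descend $E$ to $S$. Set $F:=(\psi_{*}E)^{\vee\vee}$; this is reflexive, hence locally free on the smooth surface $S$, of rank $\rho:=\ch_{0}(E)\le r$, and it agrees with $E$ over $T\setminus e\cong S\setminus\{\pt\}$. As $\Pic(T)=\psi^{*}\Pic(S)\oplus\bZ e$, one has $\ch_{1}(E)=\psi^{*}\ch_{1}(F)+m_{0}e$ for a unique integer $m_{0}=-\deg(\det E|_{e})$, and since twisting $E$ by $\mcO_{T}(je)$ changes $m_{0}$ by a multiple of $\rho$ I may assume $0\le m_{0}\le\rho$. The first claim is that $F$ is $\mu_{L}$-semistable: a subsheaf $F'\subsetneq F$ with $\mu_{L}(F')>\mu_{L}(F)$ produces, via the generic identification $E|_{T\setminus e}\cong\psi^{*}F|_{T\setminus e}$, a subsheaf $\widetilde{E}'\subset E$ whose first Chern class equals $\psi^{*}\ch_{1}(F')$ modulo $e$; since $\psi^{*}L\cdot e=0$ this forces $\mu_{\psi^{*}L}(\widetilde{E}')=\mu_{L}(F')>\mu_{L}(F)=\mu_{\psi^{*}L}(E)$, contradicting the $\mu_{\psi^{*}L}$-semistability of $E$.

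The second claim is the discriminant bound $\Delta(E)\ge\Delta(F)$. I would get it from Grothendieck--Riemann--Roch for $\psi$ (equivalently, by comparing the Hilbert polynomials of $E$ and $F$ against $\psi^{*}H$ for an ample $H$ on $S$): using $K_{T}=\psi^{*}K_{S}+e$, the birational invariance $\chi(\mcO_{T})=\chi(\mcO_{S})$, and the fact that the cokernel of $\psi_{*}E\hookrightarrow F$ together with $R^{1}\psi_{*}E$ is zero-dimensional of some length $\ell\ge 0$, the degree-two part of GRR reads $\ch_{2}(E)=\ch_{2}(F)-\tfrac12 m_{0}-\ell$. Combined with $\ch_{1}(E)^{2}=\ch_{1}(F)^{2}-m_{0}^{2}$ this gives $\Delta(E)=\Delta(F)+m_{0}(\rho-m_{0})+2\rho\ell\ge\Delta(F)$, where non-negativity uses $0\le m_{0}\le\rho$. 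Finally $\widetilde{\Delta}_{[T]}(E)=\Delta(E)+C_{[T]}\rho^{2}\ge\Delta(F)+C_{[S]}\rho^{2}=\widetilde{\Delta}_{[S]}(F)\ge 0$, the last inequality being $T^{1}(r)$ for $(S,L)$ applied to the $\mu_{L}$-semistable $F$ of rank $\rho\le r$. I expect the main obstacle to be the bookkeeping in the GRR computation, together with the point that the normalization $0\le m_{0}\le\rho$ is both attainable by a twist and exactly what makes $m_{0}(\rho-m_{0})$ non-negative — with an unnormalized $F$ this cross term can be negative and the comparison fails.
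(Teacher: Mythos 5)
Your proof is correct and follows essentially the same route as the paper: pass to the double dual to make $E$ locally free, descend to $F=(\psi_*E)^{**}$ on $S$, check that $F$ is $\mu_L$-semistable, compare discriminants, and invoke $T^1(r)$ for $(S,L)$ together with the birational invariance of $C_{[\,\cdot\,]}$. The only difference is that the paper cites \cite[Lemma 6.4]{lan16} for the discriminant comparison while you reprove it directly via Grothendieck--Riemann--Roch after normalizing $0\le m_0\le\rho$; note that the inequality you establish, $\Delta(E)\ge\Delta(F)$, is the direction actually needed for the conclusion (the paper's displayed ``$\Delta(F)\ge\Delta(E)$'' has the roles reversed).
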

\begin{proof}
Let $E$ be a $\mu_{\psi^*L}$-semistable torsion free sheaf. 
By taking the double dual, we may assume that $E$ is locally free. 
Let $F:=\left(\psi_*E \right)^{**}$. 
Then $F$ is $\mu_L$-semistable. 
Moreover, by \cite[Lemma 6.4]{lan16}, 
we have $\Delta(F) \geq \Delta(E)$. 
Since we have $\ch_0(F)=\ch_0(E)$, 
the inequality 
$\widetilde{\Delta}_{[T]}(F)
\geq \widetilde{\Delta}_{[T]}(E)$ 
also holds, and hence the assertion holds. 
\end{proof}

\subsection{Proof of Theorems \ref{thm:bg1} and \ref{thm:restr}}

For surfaces with $\kappa(S) \leq 1$,
 we have the following result of Langer: 
\begin{thm}[{\cite[Theorem 7.1]{lan16}}] \label{thm:bgkleq1}
Let $T$ be a smooth projective surface
with $\kappa(S) \leq 1$, 
$L$ be a numerically non-trivial nef divisor on $T$. 
Then for every $\mu_L$-semistable torsion free sheaf $E$ on $T$, 
we have 
$\widetilde{\Delta}_{[T]}(E) \geq 0$. 
\end{thm}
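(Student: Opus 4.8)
The plan is to deduce Theorem \ref{thm:bgkleq1} from Langer's work \cite{lan16}, using the invariance results of the previous subsection to reduce to a convenient situation and then matching the normalisation of the correction term. The first observation is that $\widetilde{\Delta}_{[T]}$ depends only on the birational class of $T$: the constant $C_{[T]}$ is defined that way, and both the property ``$\kappa=1$ and quasi-elliptic'' and the number $\chi(\mcO_T)$ are birational invariants among smooth projective surfaces. Hence, writing $\psi\colon T\to S$ for a birational morphism to a minimal model $S$ of $T$ (which exists because $\kappa(T)\le 1$), we have $\widetilde{\Delta}_{[T]}=\widetilde{\Delta}_{[S]}$.

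The assertion to be proved is precisely the statement $T^1(r)$ for $T$ and all ranks $r$. By Proposition \ref{prop:deform} it suffices to prove $T^1(r)$, for every $r$, for one numerically non-trivial nef divisor on $T$, and I would take $L_0:=\psi^*A$ for a fixed ample divisor $A$ on $S$. Since $\psi$ is a composite of blow-ups at points, Proposition \ref{prop:minimal} reduces $T^1(r)$ for $(T,L_0)$ to $T^1(r)$ for $(S,A)$. Together with $\widetilde{\Delta}_{[T]}=\widetilde{\Delta}_{[S]}$, this reduces everything to proving $\widetilde{\Delta}_{[S]}(E)\ge 0$ for every $\mu_A$-semistable torsion free sheaf $E$ on the minimal surface $S$ with $\kappa(S)\le 1$. (If Langer's Theorem 7.1 is already stated for an arbitrary polarised surface with $\kappa\le 1$, this reduction is unnecessary and one may invoke it directly; I include it in case his statement is phrased for a restricted class.)

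On the minimal surface $S$ I would distinguish the two cases of Definition \ref{defin:constant}. If $S$ is not a quasi-elliptic surface of Kodaira dimension one, then $C_{[S]}=0$ and the claim is the plain Bogomolov-Gieseker inequality $\Delta(E)\ge 0$, which is Langer's result in positive characteristic for surfaces with $\kappa\le 1$ outside the quasi-elliptic case \cite{lan16}. If $\kappa(S)=1$ and $S$ is quasi-elliptic, then $C_{[S]}=2-\chi(\mcO_S)$, and the claim becomes $\Delta(E)+(2-\chi(\mcO_S))\ch_0(E)^2\ge 0$; this too is in \cite{lan16}, where the correction term arises from bounding the failure of slope-semistability of the restriction of the Frobenius pull-backs of $E$ to a general fibre of the quasi-elliptic fibration. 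The remaining task is to check that Langer's correction term coincides, in our normalisation, with $(2-\chi(\mcO_S))\ch_0(E)^2$.

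I expect the only substantive point to be this last case. A general fibre of a quasi-elliptic fibration is a cuspidal rational curve, on which Frobenius pull-back genuinely destabilises, so a correction term is unavoidable, and identifying its precise shape --- the appearance of $\chi(\mcO_S)$ through the canonical bundle formula for the quasi-elliptic fibration together with Langer's quantitative bound on the Frobenius instability --- is the real content. The rest, namely the reductions to a minimal model and to the polarisation $\psi^*A$, is formal, and for $\kappa\le 1$ non-quasi-elliptic the conclusion is Langer's theorem word for word; so in the write-up I would record the two invariance reductions above and then cite \cite[Theorem 7.1]{lan16}.
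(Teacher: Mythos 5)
Your reductions (to the minimal model via Proposition \ref{prop:minimal}, to a single polarisation via Proposition \ref{prop:deform}, and the observation that $C_{[T]}$ is a birational invariant) are fine and consistent with what the paper does, and for $\kappa\le 1$ non-quasi-elliptic the citation of \cite[Theorem 7.1]{lan16} is exactly the paper's argument. But there is a genuine gap in the quasi-elliptic case: you assert that this case ``too is in \cite{lan16}'' and leave as ``the remaining task'' the identification of Langer's correction term with $(2-\chi(\mcO_S))\ch_0(E)^2$. Langer's Theorem 7.1 excludes quasi-elliptic surfaces (the paper's introduction and Section 1.2 say this explicitly: the result was previously known for $\kappa\le 1$ \emph{except} quasi-elliptic surfaces), so there is no correction term of Langer's to match, and the step you defer is precisely the content that has to be proved. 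Your guess at where the constant would come from (Frobenius instability on the cuspidal fibres plus the canonical bundle formula) is also not how the argument goes.

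The actual proof is short and elementary. For $S$ minimal quasi-elliptic with $\kappa(S)=1$ one has $K_S$ nef, numerically non-trivial, and $K_S^2=0$; by Proposition \ref{prop:deform} it suffices to treat $\mu_{K_S}$-semistable (and then stable) sheaves $E$ of rank $r\ge 2$. Riemann--Roch gives $\chi(E,E)=\chi(\mcO_S)r^2-\Delta(E)$, and Serre duality gives
\[
\chi(\mcO_S)r^2-\Delta(E)=\chi(E,E)\le \hom(E,E)+\hom\bigl(E,E(K_S)\bigr)\le 2,
\]
where the last bound uses that $E$ and $E(K_S)$ have the same $K_S$-slope (because $K_S^2=0$), so both Hom spaces are at most one-dimensional by stability. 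This yields $\Delta(E)\ge \chi(\mcO_S)r^2-2$, hence $\Delta(E)+(2-\chi(\mcO_S))r^2\ge 2r^2-2\ge 0$. Note that the choice of $K_S$ as the polarisation is essential here — with your generic $\psi^*A$ the bound on $\hom(E,E(K_S))$ would not follow from stability — so the deformation-of-polarisation step must be aimed at $K_S$, not just at some convenient ample class.
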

\begin{proof}
We prove the assertion 
when $\kappa(S)=1$ and $S$ is quasi-elliptic. 
In this case, we have $K_S^2=0$ and $K_S$ is numerically non-trivial. 
Hence by Proposition \ref{prop:deform}, 
it is enough to prove the assertion for $\mu_{K_S}$-semistable sheaves $E$. 
If $\ch_0(E)=1$, then the result trivially holds. 
Hence we may assume that $\ch_0(E) \geq 2$. 
Now by Serre duality and the Riemann-Roch theorem, we have 
\begin{align*}
\chi(\mcO_S)\ch_0(E)^2-\Delta(E)
&=\chi(E, E) \\
&\leq \hom(E, E)+\hom(E, E(K_S)) 
\leq 2, 
\end{align*}
and the desired inequality holds. 

For other cases, the result is proved in \cite[Theorem 7.1]{lan16}. 
\end{proof}

For surfaces of general type, 
the idea of the proof is similar as above. 
However, we need some more works 
to bound $\hom(E, E(K_S))$. 

We use the following easy lemma: 
\begin{lem} \label{lem:hombound}
Let $C$ be a smooth projective curve, 
$E$ a slope semistable vector bundle of rank $r$. 
For every integer $d \geq 0$, we have 
\[
\hom\left(E, E(d) \right) \leq (d+1)r^2. 
\]
\end{lem}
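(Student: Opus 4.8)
The statement to prove is Lemma \ref{lem:hombound}: for a slope semistable vector bundle $E$ of rank $r$ on a smooth projective curve $C$, and any integer $d \geq 0$, we have $\hom(E, E(d)) \leq (d+1)r^2$. My plan is to reduce this to the case $d = 0$ and then estimate $\hom(E,E)$ via the semistability of $E$. Write $F := E^\vee \otimes E(d)$, which is again a slope semistable bundle of rank $r^2$ and slope $\mu(F) = d$ (since $\mu(E^\vee \otimes E(d)) = -\mu(E) + \mu(E) + d = d$, using additivity of slope under tensor product on a curve and the fact that tensor products of semistable bundles on a curve in arbitrary characteristic are semistable — here one must be a little careful in positive characteristic, so I would instead work directly with the Harder–Narasimhan/Jordan–Hölder structure rather than invoke preservation of semistability). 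So I want to bound $h^0(F)$ for a bundle $F$ of rank $\rho = r^2$.

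The key step is the following: if $G$ is a vector bundle on $C$ all of whose Jordan–Hölder (or just HN) factors have slope $\leq 0$, then $h^0(G) \leq \rk(G)$, with the bound $h^0(G) = 0$ unless degree-$0$ semistable pieces occur, and in general a line subbundle generated by a section of a slope-$\leq m$ semistable bundle has degree $\leq m$. Concretely: for a semistable bundle $W$ of slope $\mu(W) = m \geq 0$ on $C$, any nonzero section gives an injection $\mcO_C \hookrightarrow W$, and more usefully one shows $h^0(W) \leq \rk(W) \cdot (\lfloor m \rfloor + 1)$ by an induction on the rank, quotienting by the subsheaf generated by global sections. Since $E^\vee \otimes E(d)$ need not be semistable, I would instead take its Harder–Narasimhan filtration; each HN factor $F_i$ has slope $\mu_i \leq \mu_{\max}(E^\vee \otimes E(d))$. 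Using $\mu_{\max}(E^\vee) = -\mu_{\min}(E) = -\mu(E)$ and $\mu_{\max}(E(d)) = \mu(E) + d$ (both from semistability of $E$), and the elementary inequality $\mu_{\max}(A \otimes B) \leq \mu_{\max}(A) + \mu_{\max}(B)$ on a curve — which does hold in arbitrary characteristic because $\mu_{\max}$ is computed by a subbundle and tensoring two subbundles gives a subbundle of the right slope — we get every HN factor has slope $\leq d$. Then $h^0(F) = \sum_i h^0(F_i)$ is not quite additive, but $h^0(F) \leq \sum_i h^0(F_i)$ fails too; rather $h^0$ is subadditive in short exact sequences, so $h^0(F) \leq \sum h^0(F_i)$, and each $h^0(F_i) \leq \rk(F_i)(d+1)$ by the semistable case. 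Summing, $h^0(F) \leq \rho(d+1) = r^2(d+1)$, which is the claim.

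It remains to prove the semistable case: if $W$ is semistable of slope $\mu(W) \leq d$ with $d \geq 0$, then $h^0(W) \leq \rk(W)(d+1)$. I would argue by induction on $\rk(W)$. If $h^0(W) = 0$ we are done. Otherwise let $W' \subseteq W$ be the image of the evaluation map $H^0(W) \otimes \mcO_C \to W$, a subsheaf generated by global sections; replacing by its saturation we may take $W'$ a subbundle, and it is itself globally generated with $h^0(W') \geq h^0(W)$ is false — rather $h^0(W') = h^0(W)$ need not hold, but $H^0(W') \to H^0(W)$ is an iso onto, so $h^0(W) = h^0(W')$. A globally generated bundle $W'$ of rank $\rho'$ satisfies $h^0(W') \leq \rho' + \deg(W')$ (take $\rho'$ general sections giving $\mcO^{\rho'} \to W'$ generically an iso, cokernel a torsion sheaf of length $\deg W'$), and by semistability of $W$, $\deg(W') = \rho' \mu(W') \leq \rho' \mu(W) \leq \rho' d$, giving $h^0(W') \leq \rho'(d+1) \leq \rk(W)(d+1)$.

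The main obstacle I anticipate is the positive-characteristic subtlety: tensor products and symmetric powers of semistable bundles need not be semistable when $\cha(k) = p > 0$ (counterexamples exist), so I cannot simply say "$E^\vee \otimes E(d)$ is semistable of slope $d$, apply the semistable bound." The fix, as sketched, is to use only the inequality $\mu_{\max}(A \otimes B) \leq \mu_{\max}(A) + \mu_{\max}(B)$ — which remains valid in all characteristics since it is proved via subbundles, not via semistability of the tensor product — together with subadditivity of $h^0$ along the Harder–Narasimhan filtration. Everything else is the standard characteristic-free argument about globally generated bundles on curves.
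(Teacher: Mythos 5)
Your overall strategy---rewrite $\hom(E,E(d))$ as $h^0(E^\vee\otimes E(d))$, pass to the Harder--Narasimhan factors, and bound the sections of each factor by its rank times $(\text{slope}+1)$---would work in characteristic zero, but it breaks down in positive characteristic at exactly the point you flag as the main obstacle. The inequality $\mu_{\max}(A\otimes B)\le\mu_{\max}(A)+\mu_{\max}(B)$ is \emph{not} ``proved via subbundles'': tensoring the two maximal destabilizing subsheaves produces a subsheaf of $A\otimes B$ of slope $\mu_{\max}(A)+\mu_{\max}(B)$, which yields the \emph{opposite} inequality $\mu_{\max}(A\otimes B)\ge\mu_{\max}(A)+\mu_{\max}(B)$. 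The direction you need is equivalent to the statement that the tensor product of semistable bundles is again semistable, and this is exactly one of the known failures in characteristic $p$ (it holds for \emph{strongly} semistable bundles by Ramanan--Ramanathan, but not for merely semistable ones; in particular $E^\vee\otimes E$ can be unstable for $E$ stable when $p$ is small relative to $r$). Consequently you have no upper bound on the slopes of the HN factors of $E^\vee\otimes E(d)$, and the estimate $h^0\le r^2(d+1)$ does not follow. The remaining ingredients (subadditivity of $h^0$ in short exact sequences, and $h^0(W)\le\rk(W)(\mu(W)+1)$ for semistable $W$ via the globally generated subsheaf) are correct, but they cannot be fed the slope bound you need.

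The paper's proof avoids tensor products entirely and is characteristic-free: it argues by induction on $d$, using the exact sequence $0\to E(d-1)\to E(d)\to E(d)|_p\to 0$ for a point $p\in C$, which after applying $\Hom(E,-)$ gives $\hom(E,E(d))\le\hom(E,E(d-1))+\hom\left(E|_p,E(d)|_p\right)=\hom(E,E(d-1))+r^2$; the base case $\hom(E,E)\le r^2$ follows from semistability of $E$ (filter by Jordan--H\"older factors, which are stable hence simple, and count). If you wanted to salvage your route you would have to either assume strong semistability or invoke Langer's quantitative bounds on $\mu_{\max}$ of tensor products in characteristic $p$, both of which are much heavier than this elementary lemma requires.
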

\begin{proof}
We prove it by induction on $d \geq 0$. 
For $d=0$, the assertion follows from semistability of $E$. 
Let us consider the case when $d \geq 1$. 
Pick a point $p \in C$. 
By applying the functor $\Hom(E, -)$ 
to the exact sequence 
\[
0 \to E(d-1) \to E(d) \to E(d)|_p \to 0, 
\]
we have 
\begin{align*}
\hom\left(E, E(d) \right) 
&\leq \hom\left(E, E(d-1) \right)+\hom\left(E, E(d)|_p \right) \\
&=\hom\left(E, E(d-1) \right)+r^2, 
\end{align*}
and hence the assertion holds. 
\end{proof}

Now we are ready to prove Theorems \ref{thm:bg1} and \ref{thm:restr}: 
\begin{proof}[$T^1(r-1)$ implies $T^2(r)$]
This direction follows from the arguments 
as in \cite[Subsection 3.9]{lan04}. 
For the completeness, we include the proof here. 
Let $\Lambda \subset |L|$ be a general pencil, 
let $\widetilde{T}:=\left\{(t, C) \in T \times \Lambda \colon t \in C \right\}$ 
be the incidence variety.  
Let us put $d:=L^2$. 
Since $\Lambda$ is general, 
the base locus $\Bs(\Lambda)$ 
consists of $d$-distinct points, 
and the projection 
$q \colon \widetilde{T} \to T$ is 
the blow-up at $\Bs(\Lambda)$. 
Denote by $p \colon \widetilde{T} \to \Lambda$ the projection, 
and by $f$ a class of $p$-fiber. 

Let $E \in \Coh(T)$ be a $\mu_L$-semistable tosion free sheaf 
with $\ch_0(E)=r$, 
suppose that the restriction $E|_C$ is not slope semistable 
for general $C \in \Lambda$. 
Let $E_{\bullet} \subset q^*E$ be 
the $p$-relative Hardar-Narasimhan (HN) filtration, 
which is same as the HN filtration 
with respect to $\mu_f$-stability. 
As $f$ is a numerically non-trivial nef divisor, 
we can apply $T^1(r-1)$ to the factors $F_i:=E_i/E_{i-1}$ 
to get the inequality 
\begin{equation} \label{eq:bgFi}
\widetilde{\Delta}_{\left[\widetilde{T}\right]}(F_i) \geq 0. 
\end{equation}
Note also that as $\widetilde{T} \to T$ is the blow-up, 
we have 
$\widetilde{\Delta}_{[\widetilde{T}]}
=\widetilde{\Delta}_{[T]}$. 

Let $N_1, \cdots, N_d$ be 
the $q$-exceptional divisors. 
Then there exist divisors $M_i$ on $T$ 
and integers $b_{ij}$ such that 
$\ch_1(F_i)=q^*M_i+\sum_jb_{ij}N_j$. 
Put $b_i:=\sum_jb_{ij}/d$. 
Then we have 
\begin{equation}\label{eq:mui}
\mu_i=\frac{f\ch_1(F_i)}{r_i}
=\frac{M_iL+b_id}{r_i}. 
\end{equation}

On the other hand, 
as $E$ is $\mu_L$-semistable and $q_*E_i \subset E$, 
we have 
\[
\frac{\sum_{j \leq i}M_jL}{\sum_{j \leq i}r_j} \leq \mu_L(E)
\]
and hence 
\begin{equation}\label{eq:bjd}
\sum_{j \leq i}b_jd \geq \sum_{j \leq i}r_j(\mu_j-\mu_L(E)). 
\end{equation}

Now using the inequality (\ref{eq:bgFi}), 
we have 
\begin{equation} \label{eq:surface-evaluate}
\begin{aligned}
\frac{d\widetilde{\Delta}_{[T]}(E)}{r}
&=\frac{\sum_id\widetilde{\Delta}_{[T]}(F_i)}{r_i}
-\frac{d}{r}\sum_{i <j}r_ir_j\left(\frac{\ch_1(F_i)}{r_i}-\frac{\ch_1(F_j)}{r_j} \right)^2 \\
&\geq \frac{d}{r}\sum_{i<j}r_ir_j\left(
	d\left(\frac{b_i}{r_i}-\frac{b_j}{r_j} \right)^2 
	-\left(\frac{M_i}{r_i}-\frac{M_j}{r_j} \right)^2
	\right) \\
&\geq \frac{1}{r}\sum_{i<j}r_ir_j\left(
	d^2\left(\frac{b_i}{r_i}-\frac{b_j}{r_j} \right)^2 
	-\left(\frac{M_iL}{r_i}-\frac{M_jL}{r_j} \right)^2
	\right), 
\end{aligned}
\end{equation}
where the last inequality follows from the Hodge index theorem. 
By using the equation (\ref{eq:mui}), 
the bottom line of the above inequalities becomes 
\[
2\sum_idb_i\mu_i-\frac{1}{r}\sum_{i<j}r_ir_j(\mu_i-\mu_j)^2. 
\]
By the inequality (\ref{eq:bjd}), we get 
\begin{align*}
\sum_idb_i\mu_i
&=\sum_i\left(\sum_{j \leq i}db_j \right)(\mu_i-\mu_{i+1}) \\
&\geq \sum_i\left(\sum_{j \leq i}r_j(\mu_j-\mu_L(E)) \right)(\mu_i-\mu_{i+1}) \\
&=\sum_ir_i\mu_i^2-r\mu_L(E)^2
=\sum_{i<j}\frac{r_ir_j}{r}(\mu_i-\mu_j)^2, 
\end{align*}
and hence $T^2(r)$ holds. 
\end{proof}

\begin{proof}[$T^2(r)$ implies $T^1(r)$]
By Theorem \ref{thm:bgkleq1}, 
we may assume that $\kappa(T)=2$. 
Moreover, by Propositions \ref{prop:deform} and \ref{prop:minimal}, 
it is enough to prove the assertion for a minimal surface $S$, 
and a divisor $L \in \BD(S)$ with $K_SL=d_{[S]}$. 

Assume for a contradiction that 
there exists a $\mu_L$-semistable torsion free sheaf $E \in \Coh(S)$ 
with $\ch_0(E)=r$ such that $\widetilde{\Delta}_{[S]}(E) < 0$. 
Then, by $T^2(r)$, the restriction $E|_C$ is slope semistable 
for a general member $C \in |L|$. 

Note that we may assume that $E$ is $\mu_L$-stable. 
By Serre duality and the Riemann-Roch theorem, we have 
\begin{equation} \label{eq:rr}
\begin{aligned}
\chi(\mcO_S)r^2-\Delta(E)
&=\chi(E, E) \\
&\leq \hom(E, E)+\hom(E, E(K_S)) \\
&=1+\hom(E, E(K_S)). 
\end{aligned}
\end{equation}

On the other hand, by the exact sequence 
\[
0 \to E(K_S-L) \to E(K_S) \to E(K_S)|_C \to 0, 
\]
Lemma \ref{lem:hombound}, 
and the equation $K_SL=d_{[S]}$, 
we have 
\begin{align*}
\hom(E, E(K_S)) 
&\leq \hom(E, E(K_S-L))+\hom(E, E(K_S)|_C) \\
&\leq 1+\hom\left(E|_C, E(K_S)|_C \right) \\
&\leq 1+(d_{[S]}+1)r^2. 
\end{align*}

Combining with the inequality (\ref{eq:rr}), 
we obtain 
\[
\Delta(E)+\left(d_{[S]}-\chi(\mcO_S)+1 \right)r^2+2 \geq 0, 
\]
which is a contradiction. 
\end{proof}

\begin{rmk} \label{rmk:rank2}
Let $S$ be a minimal surface of general type. 
For rank two slope semistable bundles $E$ on $S$, 
a similar bound is obtained in \cite[Theorem 12]{sb91}. 
In fact, we have 
\[
\begin{cases}
\Delta(E)+K_S^2 \geq 0 & (\cha(k)=p \geq 3), \\
\Delta(E)+\max\left\{K_S^2, K_S^2-3\ch(\mcO_S)+2 \right\} \geq 0 & (p=2). 
\end{cases}
\]
\end{rmk}

\begin{ex}[Counter-examples to Kodaira vanishing]
It is known that there exist 
a surface $S$ of general type 
and an ample divisor $L$ on $S$ such that 
$H^1(S, L^{-1}) \neq 0$ 
(cf. \cite{muk13, ray78}). 
Our Theorem \ref{thm:bg1} gives 
an upper bound 
on the intersection number $L^2$ 
for such a divisor $L$. 
Indeed, consider the non-trivial extension 
\[
0 \to \mcO_S \to E \to L \to 0, 
\]
which exist as we assume $H^1(S, L^{-1}) \neq 0$. 
Then the bundle $E$ is $\mu_L$-stable, 
and we have $\Delta(E)=-L^2$. 
Hence by Theorem \ref{thm:bg1}, we have 
\[
4\left(5K_S^2-\chi(\mcO_S)+2 \right) 
\geq 4C_{[S]} \geq L^2. 
\]

See \cite{sb91} (and Remark \ref{rmk:rank2} above) 
for the detailed study on the vanishing theorem 
in positive characteristic. 
\end{ex}

\section{BG inequality in higher dimension} \label{sec:high}
Let $X$ be a smooth projective variety of dimension $n \geq 3$, 
and $H$ a very ample divisor. 
Let $\Pi:=|H|$ be the complete linear system, 
and define the incidence variety $\mcS$ as 
\[
\mcS:=\left\{
	\left(x, (H_i) \right) \in X \times \Pi^{\times (n-2)} \colon 
	x \in H_i \mbox{ for all } i
	\right\}. 
\]
We have the following diagram: 
\[
\xymatrix{
&\mcS \ar[r]^<<<<{p} \ar[d]_{q} &\Pi^{\times (n-2)} \\
&X. &
}
\]

For a point $t \in \Pi^{\times n-2}$, 
we denote by $\mcS_t:=p^{-1}(t)$ 
the scheme-theoretic fiber of $t$. 
First we need to define a well-defined discriminant with a modification term. 

\begin{def-prop} \label{def-prop:const}
There exists a non-empty open subset 
$U \subset \Pi^{\times (n-2)}$ satisfying the following properties: 
\begin{enumerate}
\item For every point $t \in U$, the fiber $\mcS_t$ is smooth, 
\item The Kodaira dimension $\kappa(\mcS_t)$ is constant for all $t \in U$, 
\item When $\kappa(\mcS_t) \geq 0$, let $R_t$ be the minimal model of $\mcS_t$. 
Then the invariants $K_{R_t}^2, \chi(\mcO_{R_t})$ are constant for all $t \in U$. 
\end{enumerate}

In particular, the number 
\[
C_{X, H}:=
\begin{cases}
0 & (\kappa(\mcS_t) \leq0), \\
5K_{R_t}^2-\chi(\mcO_{R_t})+2 & (\kappa(\mcS_t) \geq 1) 
\end{cases}
\]
is independent on the points $t \in U$. 
Moreover, we have 
$C_{X, T} \geq C_{[\mcS_t]}$ for all $t \in U$, 
where the constant $C_{[\mcS_t]}$ 
is defined as in Definition \ref{defin:constant}. 
\end{def-prop}
\begin{proof}
First take an open subset $U' \subset \Pi^{\times (n-2)}$ 
so that all the fibers $\mcS_t$ are smooth, 
and put $\mcS_{U'}:=p^{-1}(U')$. 
It is enough to show the following claim: 
Let $C \to U'$ be a non-constant morphism 
from a smooth quasi-projective curve $C$, 
and set $\mcS_C:=C \times_{U'} \mcS_{U'}$. 
Then for a general point $c \in C$, 
the Kodaira dimension $\kappa(\mcS_c)$ 
and the invariants 
$K_{\mcS_c}^2, \chi(\mcO_{\mcS_c})$ are constant. 

Note that the morphism $\mcS_C \to C$ is 
a smooth projective morphism of relative dimension two. 
By Theorem \ref{thm:mmp}, 
we can run an MMP for $\mcS_C$ over $C$: 
\[
\mcS_C=R_0 \dashrightarrow R_1 \dashrightarrow \cdots \dashrightarrow R_l. 
\]
Observe the following facts: 
\begin{itemize}
\item If $R_i \dashrightarrow R_{i+1}$ contracts a divisor to a point or is a flip, 
then it does not change the general fiber of $R_i \to C$. 
\item If $R_i \dashrightarrow R_{i+1}$ contracts a divisor to a curve, 
then it contracts (-1)-curves of the general fiber of $R_i \to C$. 
\end{itemize}
Hence for general $t \in C$, the morphism 
$\mcS_t \to (R_l)_t$ contracts the same number of $(-1)$-curves. 
Moreover, the invariants 
$K_{\mcS_t}^2, \chi(\mcO_{\mcS_t})$ 
are constant for all $t \in C$. 
Hence the same holds for the minimal models $(R_l)_t$ with $t \in C$ general. 

So we have proven the existence of 
an open subset $U \subset \Pi^{\times (n-2)}$ 
with the required properties. 
For the inequality $C_{X, H} \geq C_{[\mcS_t]}$, 
see Remark \ref{rmk:constant.v.a.}. 
\end{proof}

Following \cite{blms17}, we use the following notion: 
\begin{defin}
\begin{enumerate}
\item For general hyperplanes $H_1, H_2 \in |H|$, 
we call the intersection $H_1 \cap H_2$ 
as a {\it codimension two linear subspace of $X$}. 

\item Let $\mcV=(V_1, \cdots, V_m)$ be an ordered configuration of codimension two 
linear subspaces in $X$. 
We say that $\psi \colon Y \to X$ is a {\it blow-up along $\mcV$} 
if $\psi$ is the iterated blow-up along the strict transforms of the $V_i$'s. 
\end{enumerate}
\end{defin}

For the blow-up $\psi \colon Y \to X$ 
along an ordered configuration of codimension two linear subspaces, 
we define a discriminant $\widetilde{\Delta}_{Y, \psi^*H}$ as 
\[
\widetilde{\Delta}_{Y, \psi^*H}:=\psi^*H^{n-2}\Delta+C_{X, H}\ch_0^2. 
\]

\begin{thm} \label{thm:BGhigh}
Let $\psi \colon Y \to X$ be the blow-up of 
an ordered configuration of codimension two linear subspaces. 
Let $E \in \Coh(Y)$ be a $\psi^*H$-slope semistable torsion free sheaf.  
Then we have 
\[
\widetilde{\Delta}_{Y, \psi^*H}(E) \geq 0. 
\]
\end{thm}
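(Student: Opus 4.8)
The plan is to mimic the surface-case induction of Theorem \ref{thm:bg1}, carried out this time in the category of codimension-two blow-ups of $X$ and with the fibre direction provided by the exceptional/hyperplane-section structure on such blow-ups, exactly as in \cite{blms17}. Concretely, I would set up a simultaneous induction on $\ch_0(E)$: let $P^1(r)$ be the statement of Theorem \ref{thm:BGhigh} for all $\psi\colon Y\to X$ as in the hypothesis and all $\psi^*H$-semistable torsion free $E$ with $\ch_0(E)\le r$, and let $P^2(r)$ be the corresponding restriction statement — namely that for such $E$ with $\ch_0(E)=r$, if the restriction $E|_{\mcS_t}$ to a general codimension-two linear section $\mcS_t=\psi^*H_1\cap\cdots\cap\psi^*H_{n-2}$ fails to be slope semistable, then $\sum_{i<j}r_ir_j(\mu_i-\mu_j)^2\le (\psi^*H)^{n-2}\cdot\widetilde\Delta_{Y,\psi^*H}(E)$, with $r_i,\mu_i$ the ranks and slopes of the HN factors of $E|_{\mcS_t}$. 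The cases $r=1$ are trivial. I would then prove, just as in Section \ref{sec:surf}, that $P^1(r-1)\Rightarrow P^2(r)$ and $P^2(r)\Rightarrow P^1(r)$.

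For $P^1(r-1)\Rightarrow P^2(r)$ I would follow the computation in the proof of ``$T^1(r-1)$ implies $T^2(r)$'' almost verbatim. Take a general pencil $\Lambda$ inside the linear system cutting out codimension-two linear sections (i.e. fix $H_1,\dots,H_{n-3}$ general and let $H_{n-2}$ vary in a general pencil of $|H|$); form the incidence variety $\widetilde Y\subset Y\times\Lambda$ with projections $q\colon\widetilde Y\to Y$ (a blow-up along a configuration of codimension-two linear subspaces of $Y$, hence $\widetilde Y$ is again in the admissible category) and $p\colon\widetilde Y\to\Lambda$. Crucially $p$-fibres are themselves codimension-two blow-ups of the $\mcS_t$, and the class $f=\psi^*H^{n-3}\cdot[\text{$p$-fibre}]$ is a nef class to which one can feed $P^1(r-1)$ applied to the HN factors $F_i$ of the $p$-relative HN filtration of $q^*E$. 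The blow-up invariance $\widetilde\Delta_{\widetilde Y}=\widetilde\Delta_Y$ (the constant $C_{X,H}$ is unchanged since it is a birational invariant of the section) plays the role of Proposition \ref{prop:minimal}, and the Hodge-index-type inequality on the intersection form restricted to the relevant plane of divisor classes gives, after the same bookkeeping with the $b_i$'s as in \eqref{eq:surface-evaluate}--\eqref{eq:bjd}, the claimed inequality.

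For $P^2(r)\Rightarrow P^1(r)$ I would reduce, as in the surface proof, to the genuinely new case via the analogue of Propositions \ref{prop:deform} and \ref{prop:minimal}: blow-up invariance lets me pass from arbitrary $\psi$ down to $X$ itself, and a deformation-of-polarization argument (here one only moves inside the ray $\psi^*H$, or rather one uses that semistability is being tested against the single divisor $\psi^*H$) reduces to bounding $\widetilde\Delta_{X,H}(E)$ for $\mu_H$-stable $E$ on $X$. Assuming $\widetilde\Delta<0$, $P^2(r)$ forces $E|_{\mcS_t}$ to be slope semistable for general $t$; one then runs Langer's restriction machinery to descend to the surface $\mcS_t$. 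On $\mcS_t$ the key point is Def-Prop \ref{def-prop:const}: $C_{X,H}\ge C_{[\mcS_t]}$, so Theorem \ref{thm:bg1} applied to $E|_{\mcS_t}$ gives $\widetilde\Delta_{[\mcS_t]}(E|_{\mcS_t})\ge 0$, i.e. $\Delta(E|_{\mcS_t})+C_{X,H}\ch_0(E)^2\ge 0$; translating $\Delta(E|_{\mcS_t})$ back to $\psi^*H^{n-2}\Delta(E)$ (they agree for general sections by restriction of Chern classes, up to the error controlled by semistability of the restriction) yields $\widetilde\Delta_{X,H}(E)\ge 0$, a contradiction.

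The main obstacle I expect is making the fibration step genuinely work in dimension $n\ge 3$: one must check that the incidence variety $\widetilde Y$ is again an iterated blow-up of $X$ along an \emph{ordered configuration of codimension-two linear subspaces} (so that $P^1(r-1)$ is applicable to its codimension-two sections and the blow-up invariance of $C_{X,H}$ applies), that $p$-fibres are of the expected form, and that the nef class $f$ one tests $P^1(r-1)$ against is correctly the restriction of $\psi^*H^{n-3}$ to such a fibre — i.e. that all the intersection-theoretic identities \eqref{eq:mui}--\eqref{eq:bjd} survive the extra $H^{n-3}$ factor and the Hodge index theorem is still being invoked on a rank-two sublattice where it is valid. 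This is exactly the geometric input isolated in \cite{blms17}, and I would lean on their setup; the rest is the formal induction already carried out for surfaces.
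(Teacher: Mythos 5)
Your overall strategy --- a simultaneous induction on $\ch_0$ pairing the BG statement with a restriction statement, an incidence-variety/relative-HN computation for one direction, and descent to a surface section where Theorem \ref{thm:bg1} and the inequality $C_{X,H}\ge C_{[\mcS_t]}$ finish the job --- is exactly the paper's. But the way you have set up the restriction statement does not hang together for $n>3$. You formulate $P^2(r)$ as restriction directly to a codimension-two linear section $\mcS_t$ (a surface), yet the incidence variety you build is the one attached to a pencil in $|\psi^*H|$: its total space is a blow-up of $Y$ along a single codimension-two linear subspace precisely because its $p$-fibres are the \emph{divisors} $D_t\in|\psi^*H|$, not the surfaces $\mcS_t$. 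The total space of a one-parameter family of surfaces has dimension $3$ and does not map birationally onto $Y$ when $n>3$, so the bookkeeping with $q$-exceptional divisors and the Hodge-index computation as in (\ref{eq:surface-evaluate}) has no meaning there. The paper resolves this by making the restriction theorem (Theorem \ref{thm:restrhigh}) a statement about $E|_D$ for a general divisor $D\in|\psi^*H|$, and reaching the surface only in the step ``$T^2(r)$ implies $T^1(r)$'' by induction on $n=\dim X$: a general $D$ is again a codimension-two blow-up of a general hyperplane section of $X$, so one peels off one hyperplane at a time. Your version needs either this reorganization or a genuinely new argument for the one-shot restriction to surfaces; as written it is internally inconsistent.

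A second point you should not wave at: the HN factors $F_i$ of the $p$-relative HN filtration are semistable with respect to the degenerate polarization $f.\widetilde{\psi}^*H^{n-2}$, whereas your inductive hypothesis $P^1(r-1)$ only concerns $\psi^*H$-semistability. In the surface case this gap is closed by Proposition \ref{prop:deform} (invariance under arbitrary nef polarizations); in higher dimension the paper isolates the needed statement as Proposition \ref{prop:changepol}, i.e.\ \cite[Proposition 8.9]{blms17}, which deforms the polarization along $th+f$ and uses negative semi-definiteness of the discriminant on the kernel of the associated central charge. You cite \cite{blms17} for ``the geometric input,'' which is the right reference, but as written your argument applies $P^1(r-1)$ to sheaves it does not cover. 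With the restriction statement reformulated as in the paper and Proposition \ref{prop:changepol} supplied explicitly, the remainder of your outline (including the reduction $\Delta(E|_{\mcS_t})=\psi^*H^{n-2}\Delta(E)$, which holds exactly for general complete-intersection sections, with no error term) is correct.
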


\begin{thm} \label{thm:restrhigh}
Let $\psi \colon Y \to X$ be the blow-up of 
an ordered configuration of codimension two linear subspaces. 
Let $E \in \Coh(Y)$ be a $\psi^*H$-slope semistable torsion free sheaf. 
Assume that for any general member $D \in |\psi^*H|$, 
the restriction $E|_D \in \Coh(D)$ is not $\psi^*H|_D$-semistable. 
Let $r_i, \mu_i$ be the ranks and slopes of its Hardar-Narasimhan factors. 
Then we have 
\[
\sum_{i<j}r_ir_j(\mu_i-\mu_j)^2 
\leq H^n \cdot \widetilde{\Delta}_{Y, \psi^*H}(E). 
\]
\end{thm}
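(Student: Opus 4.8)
The plan is to run Langer's incidence-variety argument \cite[\S3.9]{lan04} --- the very argument used above to prove Theorem \ref{thm:restr} --- inside the category of blow-ups of $X$ along codimension-two linear subspaces. I would set up a simultaneous induction on $r$: write $Y^1(r)$, $Y^2(r)$ for the assertions of Theorems \ref{thm:BGhigh}, \ref{thm:restrhigh} when $\ch_0(E) \le r$. The case $r=1$ is trivial for $Y^2$ (the restriction of a rank-one torsion-free sheaf is always semistable, so the hypothesis of Theorem \ref{thm:restrhigh} is vacuous), so assume $\ch_0(E) = r \ge 2$ together with $Y^1(r-1)$ for every blow-up of $X$ along a configuration of codimension-two linear subspaces; the goal is $Y^2(r)$.

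First I would build the incidence variety. As $H$ is very ample, $H^0(Y,\psi^*H) = H^0(X,H)$ and $|\psi^*H|$ is base-point free; a general pencil $\Lambda \subset |\psi^*H|$ is the pullback of a general pencil on $X$, its base scheme is a smooth codimension-two subscheme $B \subset Y$, and --- granting the transversality statement discussed in the last paragraph --- the incidence variety $\widetilde{Y} := \{(y,D) \in Y \times \Lambda : y \in D\}$, which is the blow-up $q \colon \widetilde{Y} \to Y$ of $B$, is again a blow-up of $X$ along codimension-two linear subspaces, so that $Y^1(r-1)$ applies on $\widetilde{Y}$. Write $p \colon \widetilde{Y} \to \Lambda \cong \bP^1$ for the other projection, $N$ for the $q$-exceptional divisor, $\widetilde{H} := q^*\psi^*H$, and $f$ for the class of a $p$-fibre; then $f = \widetilde{H} - N$, and since $B$ arises from a codimension-two linear subspace one computes $\widetilde{H}^{n-2}N^2 = -H^n$, $\widetilde{H}^{n-2} f N = H^n$, and $\widetilde{H}^{n-2} f\, q^*M = H^{n-1}M$ for every divisor $M$ on $Y$. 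Next I would take the $p$-relative Harder--Narasimhan filtration $E_\bullet \subset q^*E$; for $\Lambda$ general it agrees with the HN filtration for $\mu_{\widetilde{H}^{n-2},f}$-stability and restricts on a general fibre to the HN filtration of $E|_D$ (which has $\ge 2$ steps, by hypothesis), so its factors $F_i := E_i/E_{i-1}$ are torsion-free of ranks $r_i$ and $\mu_{\widetilde{H}^{n-2},f}$-slopes $\mu_i$ as in the statement. Each $F_i$ is $\mu_{\widetilde{H}^{n-2},f}$-semistable; deforming the last polarization from $f$ to $\widetilde{H}$ along $t\widetilde{H}+(1-t)f$ --- the analogue of Proposition \ref{prop:deform}, which rests on $Y^1(r-1)$ applied to the lower-rank pieces that occur and on the analogue of Lemma \ref{lem:strss}, hence on the generalized Hodge index theorem for $\widetilde{H}$ --- then gives $\widetilde{\Delta}_{\widetilde{Y},\widetilde{H}}(F_i) \ge 0$.

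The rest is the bookkeeping of \cite[\S3.9]{lan04}. Writing $\ch_1(F_i) = q^*M_i + c_i N$, one has $\mu_i = (H^{n-1}M_i + c_i H^n)/r_i$, and $\sum_i c_i = 0$, $\sum_i M_i = \ch_1(E)$ since $\ch_1(q^*E) = q^*\ch_1(E)$ carries no $N$-component. Using moreover $\ch(q^*E) = q^*\ch(E)$ and the blow-up invariance $\widetilde{\Delta}_{\widetilde{Y},\widetilde{H}} = \widetilde{\Delta}_{Y,\psi^*H}$, one gets the filtration identity
\[
\frac{H^n \cdot \widetilde{\Delta}_{Y,\psi^*H}(E)}{r}
= \sum_i \frac{H^n \cdot \widetilde{\Delta}_{\widetilde{Y},\widetilde{H}}(F_i)}{r_i}
- \frac{H^n}{r}\sum_{i<j} r_i r_j\, \widetilde{H}^{n-2}\!\left(\frac{\ch_1(F_i)}{r_i} - \frac{\ch_1(F_j)}{r_j}\right)^2 .
\]
Expanding $\widetilde{H}^{n-2}(q^*\beta + \gamma N)^2 = H^{n-2}\beta^2 - H^n\gamma^2$, applying the Hodge index inequality $H^{n-2}\beta^2 \le (H^{n-1}\beta)^2/H^n$ to $\beta = M_i/r_i - M_j/r_j$, dropping the terms $\widetilde{\Delta}_{\widetilde{Y},\widetilde{H}}(F_i) \ge 0$, and rewriting everything through the $\mu_i$, one arrives at
\[
\frac{H^n \cdot \widetilde{\Delta}_{Y,\psi^*H}(E)}{r} \ge 2 H^n \sum_i c_i \mu_i - \frac{1}{r}\sum_{i<j} r_i r_j (\mu_i - \mu_j)^2 .
\]
Finally $q_*E_i \subset q_*q^*E = E$ together with $\mu_{\psi^*H}$-semistability of $E$ gives $\sum_{j \le i} c_j H^n \ge \sum_{j\le i} r_j(\mu_j - \mu_{\psi^*H}(E))$, and two summations by parts (as in the proof of Theorem \ref{thm:restr}, using that the $\mu_i$ are decreasing) yield $H^n\sum_i c_i\mu_i \ge \frac{1}{r}\sum_{i<j} r_i r_j(\mu_i-\mu_j)^2$; substituting gives $H^n \cdot \widetilde{\Delta}_{Y,\psi^*H}(E) \ge \sum_{i<j} r_i r_j(\mu_i-\mu_j)^2$, as required.

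The step I expect to be the real obstacle is not this computation --- it transcribes the surface case almost verbatim, with the single exceptional divisor $N$ playing the role of the points $N_1,\dots,N_d$ there, so that the Cauchy--Schwarz step of the surface proof disappears --- but the two structural inputs that legitimate the transcription in dimension $\ge 3$. First, one must arrange that $\widetilde{Y}$ is itself a blow-up of $X$ along codimension-two linear subspaces, so that $Y^1(r-1)$ genuinely applies to the $F_i$ living on it: this is a transversality question, delicate precisely when $n \ge 4$, since then a general codimension-two linear subspace necessarily meets the earlier centres of the configuration and the base scheme of the pencil can acquire components inside the exceptional divisors; this is where I would follow \cite{blms17} closely. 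Second, one must set up the higher-dimensional analogues of Lemma \ref{lem:strss} and Proposition \ref{prop:deform}, which is the point at which the Hodge index theorem for the mixed form $\alpha \mapsto \widetilde{H}^{n-2}\alpha^2$ with $\widetilde{H}$ only nef and big has to be invoked --- both to pass from $\mu_{\widetilde{H}^{n-2},f}$-semistability to $\mu_{\widetilde{H}}$-semistability of the $F_i$ and in the final chain of inequalities above.
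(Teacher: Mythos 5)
Your proposal is correct and follows essentially the same route as the paper: the incidence variety of a general pencil in $|\psi^*H|$ (again a blow-up along codimension-two linear subspaces), the $p$-relative Harder--Narasimhan filtration, a change-of-polarization statement in the spirit of \cite[Proposition 8.9]{blms17} to get $\widetilde{\Delta}(F_i)\ge 0$ from the inductive hypothesis $T^1(r-1)$, and then Langer's bookkeeping. The only difference is one of exposition: the paper delegates the deformation/Hodge-index input to Proposition \ref{prop:changepol} and says the remaining computation is "the same as in the surface case," whereas you carry it out explicitly and correctly note that the single irreducible exceptional divisor $N$ makes the Cauchy--Schwarz step of the surface argument unnecessary.
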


Let $T^1(r)$ (resp. $T^2(r)$) be the statement that 
Theorem \ref{thm:BGhigh} (resp. \ref{thm:restrhigh}) holds when $\ch_0(E) \leq r$. 
As in the surface case, 
we prove that $T^2(r)$ implies $T^1(r)$, 
and that $T^1(r-1)$ implies $T^2(r)$. 

\begin{proof}[$T^1(r-1)$ implies $T^2(r)$]
The proof is almost same as the surface case. 
We take a general pencil $\Lambda \subset |\psi^*H|$ 
and consider the incidence variety 
\begin{equation} \label{eq:incidence}
\widetilde{Y}:=\left\{
	(y, D) \in Y \times \Lambda \colon y \in D
	\right\}. 
\end{equation}
Denote by $q \colon \widetilde{Y} \to Y$, 
$p \colon \widetilde{Y} \to \Lambda$ the projections, 
and by $f$ the class of a $p$-fiber. 
Note that the composition 
$\widetilde{\psi} \colon \widetilde{Y} \to Y \to X$ 
is again the blow-up of an ordered configuration 
of codimension two linear subspaces. 
Let $E_\bullet \subset q^*E$ be 
the $p$-relative HN filtration, 
which is same as the HN filtration with respect to 
$f.\widetilde{\psi}^*H^{n-2}$-slope stability. 
By Proposition \ref{prop:changepol} below, 
each HN factor $F_i:=E_i/E_{i-1}$ 
satisfies the inequality 
$\widetilde{\Delta}_{Y, \psi^*H}(F_i) \geq 0$. 
Now the remaining computations are same as in the surface case. 
\end{proof}

\begin{prop}[{\cite[Proposition 8.9]{blms17}}]
\label{prop:changepol}
Suppose $T^1(r-1)$ holds. 
Let $F \in \Coh(\widetilde{Y})$ be 
a $f.\widetilde{\psi}^*H^{n-2}$-slope semistable torsion free sheaf 
with $\ch_0(F) \leq r-1$. 
Then the inequality 
\[
\widetilde{\Delta}_{\widetilde{Y}, \widetilde{\psi}^*H}(F) \geq 0
\]
holds. 
\end{prop}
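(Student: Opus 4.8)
The plan is to run a variation-of-polarization argument in the spirit of the proof of Proposition \ref{prop:deform} (see also \cite[Proposition 6.2]{lan16}, \cite[Proposition 8.9]{blms17}), using crucially that $\widetilde{\psi}\colon\widetilde{Y}\to X$ is itself a blow-up of an ordered configuration of codimension two linear subspaces, so that $T^1(r-1)$, i.e.\ Theorem \ref{thm:BGhigh} for sheaves of rank at most $r-1$, applies verbatim on $(\widetilde{Y},\widetilde{\psi}^*H)$. The idea is to interpolate between the polarization $f\cdot\widetilde{\psi}^*H^{n-2}$ in the statement and the polarization $\widetilde{\psi}^*H^{n-1}$, for which $T^1(r-1)$ is available. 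Since $f$ is the class of a fiber of $p\colon\widetilde{Y}\to\Lambda\cong\bP^1$ it is nef, and $\widetilde{\psi}^*H$ is nef and big, being the pull-back of an ample class under a birational morphism; in fact $f=\widetilde{\psi}^*H-N$, where $N$ is the $q$-exceptional divisor. Hence for $t\in[0,1]$ the divisor $A_t:=(1-t)\widetilde{\psi}^*H+tf$ is nef, and $D_t:=A_t\cdot\widetilde{\psi}^*H^{n-2}$ is a numerically non-trivial nef $(n-1)$-cycle joining $D_1=f\cdot\widetilde{\psi}^*H^{n-2}$ to $D_0=\widetilde{\psi}^*H^{n-1}$.

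First I would establish the higher-dimensional analogue of Lemma \ref{lem:strss} on $\widetilde{Y}$: for an exact sequence $0\to F_1\to F\to F_2\to 0$ of torsion free sheaves with $\mu_{D_t}(F_1)=\mu_{D_t}(F)=\mu_{D_t}(F_2)$ for some $t\in[0,1]$, one has
\[
\frac{\widetilde{\Delta}_{\widetilde{Y},\widetilde{\psi}^*H}(F)}{\ch_0(F)}\ge\frac{\widetilde{\Delta}_{\widetilde{Y},\widetilde{\psi}^*H}(F_1)}{\ch_0(F_1)}+\frac{\widetilde{\Delta}_{\widetilde{Y},\widetilde{\psi}^*H}(F_2)}{\ch_0(F_2)}.
\]
Just as in the proof of Lemma \ref{lem:strss}, the $C_{X,H}\ch_0^2$ terms cancel, and one is reduced to proving $\widetilde{\psi}^*H^{n-2}\cdot\xi^2\le 0$, where $\xi:=\ch_1(F_1)/\ch_0(F_1)-\ch_1(F_2)/\ch_0(F_2)$ satisfies $A_t\cdot\widetilde{\psi}^*H^{n-2}\cdot\xi=0$ by the equal-slope hypothesis. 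Here I would use the Hodge index theorem in the following form: the symmetric pairing $\langle D,D'\rangle:=\widetilde{\psi}^*H^{n-2}\cdot D\cdot D'$ on $N^1(\widetilde{Y})_{\bR}$ has at most one positive eigenvalue (because $\widetilde{\psi}^*H$ is nef and big), while the nef class $A_t$ lies in the closed half-cone on which $\langle-,-\rangle$ is non-negative; hence any $\xi$ with $\langle A_t,\xi\rangle=0$ satisfies $\langle\xi,\xi\rangle\le 0$, which is the reverse Cauchy--Schwarz inequality for a form of Lorentzian type. (Equivalently, writing $\xi=\widetilde{\psi}^*(\widetilde{\psi}_*\xi)+E$ with $E$ a $\widetilde{\psi}$-exceptional divisor, one may split off the term $\widetilde{\psi}^*H^{n-2}\cdot E^2\le 0$ and apply the classical Hodge index theorem on $X$.)

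Granting this, I would prove by induction on $\ell=\ch_0(F)$ the statement $P(\ell)$: \emph{for every $t\in[0,1]$ and every $D_t$-slope semistable torsion free sheaf $F$ on $\widetilde{Y}$ with $\ch_0(F)=\ell\le r-1$, one has $\widetilde{\Delta}_{\widetilde{Y},\widetilde{\psi}^*H}(F)\ge 0$}; the proposition is the special case $t=1$. The case $\ell=1$ is immediate, since then $\Delta(F)$ equals twice an effective codimension two cycle and $C_{X,H}\ge 0$. For $\ell\ge 2$: if $F$ is moreover $\widetilde{\psi}^*H$-slope semistable, i.e.\ $D_0$-slope semistable, then $T^1(r-1)$ applied to $(\widetilde{Y},\widetilde{\psi}^*H)$ gives the inequality directly. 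Otherwise, by the boundedness of the potential destabilizing subsheaves of $F$, exactly as in the proof of Proposition \ref{prop:deform}, there is a $t_0$ for which $F$ is strictly $D_{t_0}$-slope semistable, and hence an exact sequence $0\to F_1\to F\to F_2\to 0$ of $D_{t_0}$-slope semistable torsion free sheaves of equal $D_{t_0}$-slope with $\ch_0(F_i)<\ell$. The induction hypothesis gives $\widetilde{\Delta}_{\widetilde{Y},\widetilde{\psi}^*H}(F_i)\ge 0$, and the lemma above then yields $\widetilde{\Delta}_{\widetilde{Y},\widetilde{\psi}^*H}(F)\ge 0$, completing the induction.

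The step I expect to be the main obstacle is the Hodge-index input in the lemma: because $\widetilde{\psi}^*H$ is only nef and big, not ample, the classical Hodge index theorem on $\widetilde{Y}$ must be replaced by its nef version, and one has to check carefully that $\langle-,-\rangle$ has at most one positive eigenvalue and that the nef class $A_t$ lies in the correct half-cone --- equivalently, that the exceptional correction $\widetilde{\psi}^*H^{n-2}\cdot E^2$ is non-positive and combines correctly with the Hodge index theorem on $X$. A smaller technical point is the production of the wall $t_0$, which rests on the boundedness of families of semistable sheaves in positive characteristic.
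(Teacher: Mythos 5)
Your argument is correct and is essentially the proof the paper invokes: the cited \cite[Proposition~8.9]{blms17} is exactly this deformation of the polarization from $\widetilde{\psi}^*H^{n-1}$ to $f\cdot\widetilde{\psi}^*H^{n-2}$, with the walls handled by convexity of the discriminant plus a Hodge-index computation on the kernel of the slope function (the same computation this paper spells out as Lemma~\ref{lem:q-neg} in the hypersurface setting). Your Lorentzian verification that $\widetilde{\psi}^*H^{n-2}A_t^2\geq 0$ for all $t$, and that $f$ is isotropic but not in the radical of the pairing, matches the check performed there.
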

\begin{proof}
The proof of \cite[Proposition 8.9 ]{blms17} works, 
as our modified discriminant differs from the usual one 
only by a term $\ch_0^2$. 
\end{proof}

\begin{proof}[$T^2(r)$ implies $T^1(r)$]
Let $E \in \Coh(Y)$ be a $\psi^*H$-semistable torsion free sheaf with $\ch_0(E)=r$. 
Assume for a contradiction that we have $\Delta_{Y, \psi^*H}(E) <0$. 
As we assume $T^2(r)$, the restriction $E|_D$ to 
a general member $D \in |\psi^*H|$ 
is $\psi^*H|_D$-slope semistable. 
Note that by taking it general, we may assume that 
$D$ is again the blow-up of an ordered configuration codimension two linear subspaces 
of a general member of $|H|$. 
Hence by induction on $n=\dim X$, 
we may assume that $X$ is a surface. 
Since we have $C_{X, H} \geq C_{[X]}$ 
(see Definition-Proposition \ref{def-prop:const}), 
we get a contradiction by Theorem \ref{thm:bg1}. 
\end{proof}

\section{BG inequality on hypersurfaces}
\label{sec:hyper}
The goal is to prove a variant of the BG inequality 
for the $H$-discriminant on hypersurfaces in the projective spaces. 
Let $S^n_d \subset \bP^{n+1}$ be a smooth hypersurface 
of dimension $n \geq 2$, degree $d \geq 1$. 
Denote by $H$ the restriction of the hyperplane class of $\bP^{n+1}$. 
We prepare several notations: 
\begin{defin}
Let $\psi \colon Y \to S^n_d$ be the blow-up 
of an ordered configuration of codimension two linear subspaces. 
Denote by $E_1, \cdots, E_l$ the $\psi$-exceptional prime divisors. 
\begin{enumerate}
\item We denote by $\Lambda_Y:=\bZ \oplus \bigoplus_{i}\bZ[E_i]$. 
We define a quadratic form $q_Y$ on $\Lambda_Y$ as follows: 
\[
q_Y\left(b, \sum_ia_iE_i \right):=b^2+H^n\psi^*H^{n-2}\left(\sum_ia_iE_i \right)^2. 
\]
\item Identify $\NS(Y)$ with $\psi^*\NS(S^n_d)\oplus \bigoplus_i \bZ[E_i]$. 
Then we define a group homomorphism 
$\pr_H \colon \NS(Y) \to \Lambda_Y$ as follows: 
\begin{align*}
\pr_H\left(\psi^*M, \sum_ia_iE_i \right):=
\left(H^{n-1}M, \sum_ia_iE_i \right). 
\end{align*}
We denote by 
$\overline{\ch}_1^H:=\pr_H \circ \ch_1 \colon  
K(Y) \rightarrow \NS(Y) \to \Lambda_Y$ 
the composition. 

\item For a coherent sheaf $E \in \Coh(Y)$, we define 
\[
Q_{Y, \psi^*H}(E):=q_Y\left(\overline{\ch}^H_1(E) \right)-2H^n\ch_0(E)\psi^*H^{n-2}\ch_2(E). 
\]
\end{enumerate}
\end{defin}

\begin{rmk}
The quadratic form $Q_{Y, \psi^*H}$ is a natural generalization 
of the usual discriminant. Indeed, we have the following:
\begin{enumerate}
\item We have $Q_{S^n_d, H}=\overline{\Delta}_H$. 
\item If $\Pic(S^n_d)=\bZ[H]$, then we have 
$Q_{Y, \psi^*H}=H^n \cdot H^{n-2}\Delta$. 
\end{enumerate}
\end{rmk}

As similar to the previous sections, 
we prove the following results at the same time: 
\begin{thm} \label{thm:hyper}
Let $\psi \colon Y \to S^n_d$ be the blow-up 
of an ordered configuration of codimension two linear subspaces. 
For every $\mu_{\psi^*H}$-semistable 
torsion free sheaf $E \in \Coh(Y)$, we have 
\[
Q_{Y, \psi^*H}(E) \geq 0. 
\]
\end{thm}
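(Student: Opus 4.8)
The plan is to mirror the induction-on-rank scheme already used for Theorems \ref{thm:bg1}, \ref{thm:restr} and their higher-dimensional analogues \ref{thm:BGhigh}, \ref{thm:restrhigh}, but now with the quadratic form $Q_{Y,\psi^*H}$ in place of $\widetilde{\Delta}$. So I would introduce statements $T^1(r)$: ``Theorem \ref{thm:hyper} holds when $\ch_0(E)\le r$'' and $T^2(r)$: ``the analogous restriction inequality $\sum_{i<j}r_ir_j(\mu_i-\mu_j)^2\le H^n\cdot Q_{Y,\psi^*H}(E)$ holds when $\ch_0(E)\le r$'', and prove $T^2(r)\Rightarrow T^1(r)$ and $T^1(r-1)\Rightarrow T^2(r)$, with the $\ch_0(E)=1$ case trivial (there $\Delta=0$ and $q_Y$ is non-negative on the image of a line bundle by the Hodge index theorem applied on $Y$ via $\psi$, exactly as in Lemma \ref{lem:strss}).

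\textbf{Key steps.} First I would record the elementary properties of $Q_{Y,\psi^*H}$ that make the machine run: (i) an analogue of Lemma \ref{lem:strss}, i.e. for an exact sequence $0\to E_1\to E\to E_2\to 0$ of torsion free sheaves with equal $\mu_{\psi^*H}$-slopes one has $Q_{Y,\psi^*H}(E)/\ch_0(E)\ge \sum Q_{Y,\psi^*H}(E_i)/\ch_0(E_i)$ — this follows because $q_Y$ is a quadratic form that is non-negative on the kernel of the slope functional (Hodge index on $S^n_d$ for the $H^{n-1}(-)$ part, and Hodge index on $Y$ intersected against $\psi^*H^{n-2}$ for the exceptional part, exactly as in the definition of $q_Y$); and (ii) the blow-up/change-of-polarization invariance, i.e. analogues of Propositions \ref{prop:deform}, \ref{prop:minimal}, \ref{prop:changepol}, which go through verbatim since $Q_{Y,\psi^*H}$ differs from $H^n\cdot H^{n-2}\Delta$ only by the exceptional correction to $\ch_1$ and is compatible with further codimension-two blow-ups. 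Next, for $T^1(r-1)\Rightarrow T^2(r)$ I would run the incidence-variety argument of Langer exactly as in the proof of Theorem \ref{thm:restrhigh}: take a general pencil $\Lambda\subset|\psi^*H|$, form $\widetilde Y$, note $\widetilde\psi\colon\widetilde Y\to S^n_d$ is again a codimension-two-linear-subspace blow-up, apply $T^1(r-1)$ (via the change-of-polarization proposition) to the relative HN factors $F_i$, and carry out the same Hodge-index and telescoping-sum computation that produced equation \eqref{eq:surface-evaluate} — the only difference is bookkeeping of the exceptional part of $\overline{\ch}_1^H$.

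\textbf{The $T^2(r)\Rightarrow T^1(r)$ step and the main obstacle.} Here the argument must differ from the general-type surface case, and this is where the key input of \cite{kos20b} enters. Given a $\mu_{\psi^*H}$-semistable $E$ on $Y$ with $Q_{Y,\psi^*H}(E)<0$ and $\ch_0(E)=r$, assuming $T^2(r)$ the restriction $E|_D$ to a general $D\in|\psi^*H|$ is slope semistable, so by induction on $\dim S^n_d$ I may reduce to the case $Y=S^2_d$, where $Q_{S^2_d,H}=\overline\Delta_H$; but then the desired inequality $\overline\Delta_H(E)\ge 0$ for $\mu_H$-semistable sheaves on the surface $S^2_d$ is precisely the theorem proved in \cite{kos20b} via tilt-stability on $D^b(S^2_d)$, giving the contradiction. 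I therefore expect the main obstacle to be not a single hard estimate but the careful verification that all the auxiliary statements — the Hodge-index positivity of $q_Y$ on the relevant subspace, the stability of the class ``blow-up of a codimension-two linear subspace configuration'' under passing to general hyperplane sections and under forming the pencil incidence variety, and the compatibility of $\overline{\ch}_1^H$ and $q_Y$ with these operations — really do go through so that the two implications can be chained; the genuinely new mathematical content is confined to invoking \cite{kos20b} at the base of the induction.
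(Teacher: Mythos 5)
Your proposal matches the paper's proof: the paper runs exactly this $T^1(r)$/$T^2(r)$ induction, with Lemma \ref{lem:q-neg} and Proposition \ref{prop:changepol-bar} supplying the Hodge-index/change-of-polarization inputs for $Q_{Y,\psi^*H}$, and with \cite{kos20b} at the base of the dimension induction. The only point you state slightly imprecisely is that the reduction lands not on $S^2_d$ itself but on a blow-up of $S^2_d$ at points, which is why the paper proves Proposition \ref{prop:hyper2} (normalizing the exceptional coefficients to $0\le a_i<\ch_0(E)$ and comparing with $\dR\psi_*E$ via Grothendieck--Riemann--Roch) rather than transferring the surface case verbatim — but you do list blow-up invariance among the auxiliary statements to verify, so this is a matter of detail, not a gap.
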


\begin{thm} \label{thm:hyperrestr}
Let $\psi \colon Y \to S^n_d$ be the blow-up 
of an ordered configuration of codimension two linear subspaces. 
Let $E \in \Coh(Y)$ be a $\psi^*H$-slope semistable torsion free sheaf. 
Assume that for any general member $D \in |\psi^*H|$, 
the restriction $E|_D \in \Coh(D)$ is not $\psi^*H|_D$-semistable. 
Let $r_i, \mu_i$ be the ranks and slopes of its Hardar-Narasimhan factors. 
Then we have 
\[
\sum_{i<j}r_ir_j(\mu_i-\mu_j)^2 
\leq Q_{Y, \psi^*H}(E). 
\]
\end{thm}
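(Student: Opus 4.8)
The plan is to prove Theorems~\ref{thm:hyper} and~\ref{thm:hyperrestr} simultaneously by induction on $\ch_0(E)$, in exactly the pattern of Sections~\ref{sec:surf} and~\ref{sec:high}. Write $T^1(r)$ (resp.~$T^2(r)$) for the assertion of Theorem~\ref{thm:hyper} (resp.~Theorem~\ref{thm:hyperrestr}) restricted to sheaves with $\ch_0(E)\leq r$. Both are essentially trivial for $r=1$: the hypothesis of Theorem~\ref{thm:hyperrestr} is then vacuous, and for a line bundle $L$ with $\ch_1(L)=\psi^*M+\sum_i a_iE_i$ the exceptional contributions to $Q_{Y,\psi^*H}(L)$ cancel, leaving $Q_{Y,\psi^*H}(L)=(H^{n-1}M)^2-H^n\cdot H^{n-2}M^2$, which is $\geq 0$ by the Hodge index theorem applied on a general surface section of $S^n_d$. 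I would then prove that $T^1(r-1)$ implies $T^2(r)$ and that $T^2(r)$ implies $T^1(r)$; the first implication is what carries the content of Theorem~\ref{thm:hyperrestr}.

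For $T^1(r-1)\Rightarrow T^2(r)$ I would run the argument of \cite[Subsection~3.9]{lan04} (and of the surface proof in Section~\ref{sec:surf}) essentially verbatim. Given $E$ with $\ch_0(E)=r$ whose restriction to a general $D\in|\psi^*H|$ fails to be semistable, pick a general pencil $\Lambda\subset|\psi^*H|$ and form the incidence variety $\widetilde{Y}=\{(y,D)\in Y\times\Lambda : y\in D\}$ as in~\eqref{eq:incidence}, with $q\colon\widetilde{Y}\to Y$ the blow-up at $\Bs(\Lambda)$, $p\colon\widetilde{Y}\to\Lambda\cong\bP^1$, and $f$ the class of a $p$-fiber; the composite $\widetilde{\psi}:=\psi\circ q\colon\widetilde{Y}\to S^n_d$ is again the blow-up of an ordered configuration of codimension two linear subspaces. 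Take the $p$-relative Harder--Narasimhan filtration $E_\bullet\subset q^*E$, which coincides with the HN filtration for $f.\widetilde{\psi}^*H^{n-2}$-slope stability; its factors $F_i:=E_i/E_{i-1}$ have rank $\leq r-1$, so the change-of-polarization statement for $Q$ (the analogue of Proposition~\ref{prop:changepol}, invoking $T^1(r-1)$) gives $Q_{\widetilde{Y},\widetilde{\psi}^*H}(F_i)\geq 0$. The remaining computation is the one in Section~\ref{sec:surf}: super-additivity of $Q_{\widetilde{Y},\widetilde{\psi}^*H}$ along the filtration (the analogue of Lemma~\ref{lem:strss}), the decomposition of $\overline{\ch}^H_1(F_i)$ into a $q^*$-part and a part carried by the $q$-exceptional divisors, the Hodge-index inequalities for the form $q_{\widetilde{Y}}$, the bound on the partial sums of HN slopes coming from $\mu_{\psi^*H}$-semistability of $E$ together with $q_*E_i\subset E$, and finally a summation by parts.

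The reverse implication $T^2(r)\Rightarrow T^1(r)$ is where the hypersurface hypothesis genuinely enters. If $Q_{Y,\psi^*H}(E)<0$ for some $\mu_{\psi^*H}$-semistable $E$ of rank $r$, then by $T^2(r)$ the restriction $E|_D$ to a general $D\in|\psi^*H|$ is $\psi^*H|_D$-semistable, and, choosing $D$ general, $D\to S^{n-1}_d$ is again the blow-up of an ordered configuration of codimension two linear subspaces of a smooth degree $d$ hypersurface of dimension $n-1$. Inducting on $n$, one reduces to $n=2$, where a codimension two linear subspace of $S^2_d\subset\bP^3$ is a finite set of points, so $\psi$ is a blow-up at points; an invariance statement for $Q$ under point blow-ups (the analogue of Proposition~\ref{prop:minimal}) reduces further to $Y=S^2_d$, where $Q_{S^2_d,H}=\overline{\Delta}_H$ and $\overline{\Delta}_H(E)\geq 0$ by \cite{kos20b} --- a contradiction.

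The main obstacle is not conceptual but lies in transplanting the surface-level bilinear algebra to the pair $(q_Y,\overline{\ch}^H_1)$: one must check that $Q_{Y,\psi^*H}$ is super-additive along HN filtrations of $\mu_{\psi^*H}$-semistable sheaves, and that the Hodge-index substitutions used between the successive lines of the key display remain valid once the full intersection form is replaced by $q_Y$ and $\ch_1$ by $\overline{\ch}^H_1$. This should go through because $q_Y$ is the orthogonal sum of the positive-definite form $b\mapsto b^2$ and the restriction of the weighted intersection form to the span of the exceptional classes --- which is negative definite --- and because $\overline{\ch}^H_1$ is compatible with $q^*$; this is precisely the point where the combinatorial bookkeeping of \cite{blms17} is used. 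Once these formal lemmas are recorded, the arguments of Sections~\ref{sec:surf} and~\ref{sec:high} apply with no further change.
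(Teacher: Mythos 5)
Your proposal is correct and follows essentially the same route as the paper, which proves Theorems \ref{thm:hyper} and \ref{thm:hyperrestr} by the same double induction, with Proposition \ref{prop:changepol-bar} supplying the change-of-polarization step for the relative HN factors in the direction $T^1(r-1)\Rightarrow T^2(r)$, and Proposition \ref{prop:hyper2} (via Theorem \ref{thm:hyper2} from \cite{kos20b}) closing the dimension induction in the direction $T^2(r)\Rightarrow T^1(r)$. The formal points you flag about transplanting the bilinear algebra to $(q_Y,\overline{\ch}^H_1)$ are exactly those handled by Lemma \ref{lem:q-neg} and the bookkeeping of \cite{blms17}, so nothing further is needed.
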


\subsection{The case of $n=2$}
The key input is the following result: 
\begin{thm}[{\cite[Corollary 6.4]{kos20b}}]
\label{thm:hyper2}
Take an integer $d \geq 1$. 
For every torsion free 
$\mu_H$-semistable sheaf $E \in \Coh(S^2_d)$, 
we have 
\[
Q_{S^2_d, H}(E)=\overline{\Delta}_H(E) \geq 0. 
\]
\end{thm}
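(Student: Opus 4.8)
The plan is to reduce the inequality to a Bogomolov--Gieseker type inequality for tilt-semistable objects on the ambient space $\bP^3$, and to check that the latter survives in positive characteristic. First I would recall the construction of tilt-stability on $D^b(\bP^3)$: tilting $\Coh(\bP^3)$ at a slope $\beta$ with respect to $\mu_H$-stability produces a heart $\Coh^\beta(\bP^3)\subset D^b(\bP^3)$, and on it the tilt-slope $\nu_{\alpha,\beta}$, built from the twisted characters $H^2\ch^{\beta H}_1$ and $H\ch^{\beta H}_2$ and a parameter $\alpha>0$, is a weak stability function. The Harder--Narasimhan property and the support property for $\nu_{\alpha,\beta}$ rest only on Langer's boundedness results and on the classical BG inequality for the hyperplane section $\bP^2$, which holds in every characteristic because $\kappa(\bP^2)=-\infty$; so the whole construction is insensitive to $\cha(k)$.

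The crucial input is then: for every $\nu_{\alpha,\beta}$-semistable object $F\in\Coh^\beta(\bP^3)$ one has $Q_{\alpha,\beta}(F)\geq 0$, where $Q_{\alpha,\beta}$ is the appropriate quadratic form in $H^3\ch_0(F), H^2\ch_1(F), H\ch_2(F), \ch_3(F)$ --- the $\bP^3$-version of the Bayer--Macr\`i--Toda inequality. In characteristic zero this is a theorem of Macr\`i, whose proof reduces, via finiteness of walls in the $(\alpha,\beta)$-plane, to verifying the inequality for the finitely many Chern characters of objects that are tilt-stable at a ``last wall'', and those objects are governed by the equivalence of $D^b(\bP^3)$ with the derived category of finite-dimensional modules over the quiver algebra attached to the full strong exceptional collection $\mcO,\mcO(1),\mcO(2),\mcO(3)$ --- an equivalence valid over any field. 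I would run this reduction keeping track that tilting, wall-finiteness, and the final representation-theoretic computation are all characteristic-free; this is the technical heart, and the step I expect to be the main obstacle.

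Finally I would descend from $\bP^3$ to $S=S^2_d$. For a $\mu_H$-semistable torsion free sheaf $E$ on $S$ with closed immersion $i\colon S\hookrightarrow\bP^3$, Grothendieck--Riemann--Roch gives $\ch_0(i_*E)=0$, $\ch_1(i_*E)=d\,\ch_0(E)H$, and expresses $\ch_2(i_*E)$ and $\ch_3(i_*E)$ through $H\ch_1(E), \ch_2(E)$ and $d$. Substituting into $Q_{\alpha,\beta}$ for a suitable $(\alpha,\beta)$ in the large-volume region, the inequality $Q_{\alpha,\beta}(i_*E)\geq 0$ becomes, up to a positive multiple, precisely $\overline{\Delta}_H(E)\geq 0$. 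It remains to see that $i_*E$ is $\nu_{\alpha,\beta}$-semistable there: since $i_*$ is exact and fully faithful and, in the relevant chamber, any subobject of $i_*E$ in $\Coh^\beta(\bP^3)$ is of the form $i_*G$ for a saturated subsheaf $G\subset E$, a $\nu_{\alpha,\beta}$-destabilizer would, after comparing $\nu_{\alpha,\beta}$ on pushed-forward objects with $\mu_H$ on $S$, contradict the $\mu_H$-semistability of $E$. Combined with the tautological identity $Q_{S^2_d,H}(E)=\overline{\Delta}_H(E)$, this finishes the proof.
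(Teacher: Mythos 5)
First, note that the paper does not prove this statement at all: it is imported verbatim as \cite[Corollary 6.4]{kos20b}, and the introduction records only that the reference proves it ``using the theory of tilt-stability conditions on the derived categories.'' Your overall strategy --- push $E$ forward to $\bP^3$, invoke a characteristic-free Bayer--Macr\`i--Toda inequality for tilt-semistable objects there, and translate it back via Grothendieck--Riemann--Roch --- is indeed the strategy of that reference (the Feyzbakhsh--Li method), so you have correctly identified the right route.

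However, your final descent step has a genuine gap. For the torsion sheaf $i_*E$ one has $\ch_0(i_*E)=0$ and hence $\Delta_H(i_*E)=\bigl(H^2\ch_1(i_*E)\bigr)^2=(d\,\rk E)^2>0$, so the quadratic form $Q_{\alpha,\beta}(i_*E)=\alpha^2\Delta_H(i_*E)+\cdots$ tends to $+\infty$ in the large-volume limit: the inequality is \emph{vacuous} exactly where semistability of $i_*E$ is easy. Writing things out, $Q_{\alpha,\beta}(i_*E)$ equals $\alpha^2(rd)^2+\bigl(\beta rd-H\ch_2(i_*E)\bigr)^2+3\overline{\Delta}_H(E)-\tfrac{1}{4}r^2d^4$ (with $r=\rk E$), so to conclude $\overline{\Delta}_H(E)\geq 0$ you must know that $i_*E$ is $\nu_{\alpha,\beta}$-semistable at the specific point $\alpha=d/2$, $\beta=\mu_H(E)/d-d/2$, which lies well below the large-volume region. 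Establishing semistability there --- i.e.\ showing that no actual wall for $i_*E$ separates this point from the large-volume limit, using the $\mu_H$-semistability of $E$ and Bogomolov-type bounds on potential destabilizers --- is the technical core of the argument, and it is precisely the step you have elided. Your justification, that every subobject of $i_*E$ in $\Coh^\beta(\bP^3)$ is of the form $i_*G$, is false in the tilted heart: a destabilizing short exact sequence may have a quotient whose $H^{-1}$ is a torsion-free sheaf on $\bP^3$ (e.g.\ line bundles $\mcO_{\bP^3}(-k)$ enter in exactly this way), so $\mu_H$-semistability of $E$ does not directly rule out walls. By contrast, your caution about the positive-characteristic validity of the BMT inequality on $\bP^3$ is well placed but ultimately unproblematic: Macr\`i's argument rests on the Beilinson exceptional collection and Euler-characteristic estimates, which are insensitive to the characteristic, and this is verified in \cite{kos20b}.
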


For blow-ups, we use the following easy lemma: 
\begin{lem} \label{lem:blch}
Let $\psi \colon Y \to S^2_d$ be the blow-up at points. 
Let $E \in \Coh(Y)$ be a torsion free sheaf. 
The following statements hold: 
\begin{enumerate}
\item The sheaf $\psi_*E \in \Coh(S^2_d)$ is torsion free, and 
the sheaf $\dR^1\psi_*(E)$ is supported on a zero dimensional subscheme. 
\item We have $\ch_0(\psi_*E)=\ch_0(E)$. 
\item We have $\psi^*H\ch_1(E)=H\ch_1(\psi_*E)$. 
\item If $E$ is $\mu_{\psi^*H}$-semistable, then 
$\psi_*E$ is $\mu_H$-semistable. 
\end{enumerate}
\end{lem}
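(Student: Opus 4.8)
\emph{Strategy.} Each of the four statements reduces to the single fact that $\psi$ is an isomorphism away from a finite set: let $Z\subset S^2_d$ be the (finite) set of blown-up points, so that $\psi$ restricts to an isomorphism $\psi^{-1}(U)\xrightarrow{\sim}U$ over $U:=S^2_d\setminus Z$, and the $\psi$-exceptional prime divisors are all contracted by $\psi$. For (1): a section of $\psi_*E$ over an open $V\subset S^2_d$ is by definition a section of $E$ over $\psi^{-1}(V)$; if it were a torsion section, i.e. supported on a proper closed subset $Z'\subsetneq V$, then the corresponding section of $E$ would be supported on the proper closed subset $\psi^{-1}(Z')\subsetneq\psi^{-1}(V)$, contradicting torsion-freeness of $E$. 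Hence $\psi_*E$ is torsion free. Since $\psi|_{\psi^{-1}(U)}$ is an isomorphism, $(\dR^1\psi_*E)|_U=0$, so $\dR^1\psi_*E$ is supported on the zero-dimensional set $Z$, proving (1).

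Statements (2) and (3) are intersection-theoretic and can be checked after restriction. The rank of $\psi_*E$ is its generic rank, and $\psi_*E\cong E$ over $U$, so $\ch_0(\psi_*E)=\ch_0(E)$. For (3), pick a general member $C\in|H|$ disjoint from $Z$ and from the finitely many points where $E$ or $\psi_*E$ fails to be locally free; then $\widetilde C:=\psi^{-1}(C)$ is a smooth curve with $[\widetilde C]=\psi^*H$, the morphism $\psi$ identifies a neighbourhood of $\widetilde C$ with a neighbourhood of $C$, and in particular $(\psi_*E)|_C\cong E|_{\widetilde C}$ as vector bundles. Therefore $\psi^*H\cdot\ch_1(E)=\deg\bigl(E|_{\widetilde C}\bigr)=\deg\bigl((\psi_*E)|_C\bigr)=H\cdot\ch_1(\psi_*E)$. (Alternatively, (2)--(3) follow from Grothendieck--Riemann--Roch for $\psi$, using that the zero-dimensional sheaf $\dR^1\psi_*E$ contributes nothing in degrees $0$ and $1$.)

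For (4), observe first that (2) and (3) give $\mu_H(\psi_*E)=\mu_{\psi^*H}(E)$. Let $F\subset\psi_*E$ be an arbitrary nonzero subsheaf; it suffices to show $\mu_H(F)\le\mu_H(\psi_*E)$. Set $G:=\Image\bigl(\psi^*F\to E\bigr)$, where the map is $\psi^*$ of the inclusion $F\hookrightarrow\psi_*E$ followed by the counit $\psi^*\psi_*E\to E$. At the generic point this map is the inclusion $F_\eta\hookrightarrow E_\eta$, so $G$ has rank $\rk F$; moreover $G\subset E$ is torsion free, and $\mu_{\psi^*H}(G)\le\mu_{\psi^*H}(E)$ by $\mu_{\psi^*H}$-semistability of $E$. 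Over $\psi^{-1}(U)$ the counit is an isomorphism, so $G|_{\psi^{-1}(U)}\cong\psi^*F|_{\psi^{-1}(U)}$ as subsheaves of $E|_{\psi^{-1}(U)}$; hence $\ch_1(G)-\psi^*\ch_1(F)$ is represented by a cycle supported on the $\psi$-exceptional divisors, which is annihilated by $\psi^*H$ (each exceptional divisor $N$ satisfies $\psi^*H\cdot N=H\cdot\psi_*N=0$). Therefore $\psi^*H\cdot\ch_1(G)=\psi^*H\cdot\psi^*\ch_1(F)=H\cdot\ch_1(F)$, so $\mu_H(F)=\mu_{\psi^*H}(G)\le\mu_{\psi^*H}(E)=\mu_H(\psi_*E)$, which proves (4).

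The only point requiring genuine care is in (4): verifying that $G=\Image(\psi^*F\to E)$ has the same rank as $F$ and that $\ch_1(G)$ agrees with $\psi^*\ch_1(F)$ modulo cycles supported on the exceptional locus. Both follow from the fact that $\psi^*F\to E$ restricts over the isomorphism locus to the original inclusion $F\hookrightarrow\psi_*E$; once this bookkeeping is done the slope comparison is immediate, and (1)--(3) are routine consequences of $\psi$ being an isomorphism outside finitely many points.
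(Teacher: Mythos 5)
The paper states this lemma without proof (it introduces it as an ``easy lemma''), so there is no argument of the author's to compare against; your proof is correct and complete. All four parts are handled by the standard observations --- $\psi$ is an isomorphism away from the finitely many centres, so (1)--(3) are local or intersection-theoretic checks away from a finite set, and for (4) the image sheaf $G=\Image(\psi^*F\to E)$ has $\rk G=\rk F$ and $\ch_1(G)-\psi^*\ch_1(F)$ supported on the exceptional divisors, which are killed by $\psi^*H$ --- and the one step needing genuine care is exactly the one you flag and justify.
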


\begin{prop} \label{prop:hyper2}
Let $\psi \colon Y \to S^2_d$ be the blow-up at $l$-distinct points. 
For every $\mu_{\psi^*H}$-semistable torsion free sheaf, we have 
\[
Q_{Y, \psi^*H}(E) \geq 0. 
\] 
\end{prop}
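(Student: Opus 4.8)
The plan is to reduce to the surface case, Theorem \ref{thm:hyper2}, by pushing $E$ down along $\psi$ and comparing the two discriminants by Grothendieck--Riemann--Roch. Write $\ch_1(E)=\psi^*M+\sum_i a_iE_i$ under the splitting $\NS(Y)=\psi^*\NS(S^2_d)\oplus\bigoplus_i\bZ[E_i]$, and set $r:=\ch_0(E)$ and $d:=H^2>0$. The first step is a normalization: after replacing $E$ by $E\otimes\mcO_Y(\sum_i c_iE_i)$ for suitable integers $c_i$, I may assume $0\le a_i\le r-1$ for every $i$. This twist preserves torsion-freeness and $\mu_{\psi^*H}$-semistability, fixes $\ch_0$ and the class $M$, and, as a short computation with $q_Y$ and the formula for $\ch(E\otimes\mcO_Y(\sum_i c_iE_i))$ shows, does not change $Q_{Y,\psi^*H}(E)$ --- the point being that the twisting divisor has no component pulled back from $S^2_d$.

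Now set $F:=\psi_*E\in\Coh(S^2_d)$. By Lemma \ref{lem:blch}, $F$ is torsion free and $\mu_H$-semistable with $\ch_0(F)=r$ and $H\cdot\ch_1(F)=\psi^*H\cdot\ch_1(E)=H\cdot M$, so Theorem \ref{thm:hyper2} gives $\overline{\Delta}_H(F)\ge0$. To compare the second Chern characters I apply Grothendieck--Riemann--Roch to $\psi$: as $\psi$ is the blow-up at $l$ distinct points, its relative canonical divisor is $\sum_iE_i$ and $E_i\cdot E_j=-\delta_{ij}$, and together with $\psi^*M\cdot E_i=0$ the computation gives $\ch_2(\psi_!E)=\ch_2(E)+\tfrac{1}{2}\sum_i a_i$. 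By Lemma \ref{lem:blch}(1) the sheaf $\dR^1\psi_*E$ is of finite length $\ell\ge0$, whence $\ch_2(F)=\ch_2(\psi_!E)+\ell=\ch_2(E)+\tfrac{1}{2}\sum_i a_i+\ell$. Unwinding the definitions with $n=2$ (so that the factors $\psi^*H^{n-2}$ equal $1$ and $(\sum_i a_iE_i)^2=-\sum_i a_i^2$), one has $Q_{Y,\psi^*H}(E)=(H\cdot M)^2-d\sum_i a_i^2-2dr\,\ch_2(E)$ and $\overline{\Delta}_H(F)=(H\cdot M)^2-2dr\,\ch_2(F)$; subtracting and substituting $\ch_2(F)$ yields the identity
\[
Q_{Y,\psi^*H}(E)=\overline{\Delta}_H(F)+d\sum_i a_i(r-a_i)+2dr\ell .
\]
Every summand on the right is nonnegative: $\overline{\Delta}_H(F)\ge0$ by Theorem \ref{thm:hyper2}, $a_i(r-a_i)\ge0$ by the normalization $0\le a_i\le r-1$, and $d,r,\ell\ge0$. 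Hence $Q_{Y,\psi^*H}(E)\ge0$.

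The step I expect to be the main obstacle is the interaction between the normalization and the Riemann--Roch bookkeeping. The displayed identity holds for any $E$, but only after the $a_i$ have been moved into $[0,r-1]$ are all three of its terms nonnegative, so one must verify carefully both that $Q_{Y,\psi^*H}$ is genuinely invariant under twisting by $\mcO_Y(\sum_i c_iE_i)$, and that the relative Todd class of $\psi$ contributes exactly $+\tfrac{1}{2}\sum_i a_i$ to $\ch_2(\psi_!E)$ while the correction sheaf $\dR^1\psi_*E$ enters $\ch_2(F)$ with a $+$ sign --- this last point is what makes the term $2dr\ell$ help rather than hurt.
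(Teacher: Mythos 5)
Your proposal is correct and follows essentially the same route as the paper: normalize the exceptional multiplicities into $[0,\ch_0(E))$ using the invariance of $Q_{Y,\psi^*H}$ under twisting by $\mcO(E_i)$, push forward to $S^2_d$, apply Theorem \ref{thm:hyper2} to $\psi_*E$, and compare $\ch_2$'s via Grothendieck--Riemann--Roch together with the nonnegative contribution of $\dR^1\psi_*E$. The only difference is cosmetic: you package the comparison as a single identity with three manifestly nonnegative terms, where the paper writes the same content as a chain of inequalities.
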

\begin{proof}
Let $E_1, \cdots, E_l$ be the $\psi$-exceptional divisors. 
There exists a line bundle $M$ on $S^2_d$ 
and integers $a_i \in \bZ$ such that 
$\ch_1(E)=\psi^*M+\sum_ia_iE_i$. 
Observe that the number $Q_{Y, \psi^*H}(E)$ 
is invariant under tensoring line bundles $\mcO(E_i)$, 
hence we may assume that $0 \leq a_i < \ch_0(E)$. 
Now we obtain 
\begin{align*}
0 \leq Q_{S^2_d, H}(\psi_*E) 
&\leq Q_{S^2_d, H}(\dR\psi_*E) \\
&=(HM)^2-2H^2\ch_0(E)\left(\ch_2(E)+\frac{\sum_ia_i}{2} \right) \\
&\leq (HM)^2-H^2\sum_ia_i^2-2H^2\ch_0(E)\ch_2(E)
=Q_{Y, \psi^*H}(E). 
\end{align*}
For the first inequality, we use Theorem \ref{thm:hyper2} 
and Lemma \ref{lem:blch} (4); 
for the second inequality, we use Lemma \ref{lem:blch} (1); 
the third equality follows from the Grothendieck-Rimann-Roch 
theorem for the morphism $p$, 
together with Lemma \ref{lem:blch} (2), (3); 
the fourth inequality follows from 
the assumption that $0 \leq a_i < \ch_0(E)$. 
\end{proof}

\subsection{The case of $n \geq 3$}
For higher dimension, 
we need a variant of Proposition \ref{prop:changepol}. 
Let $\psi \colon Y \to S^n_d$ be the blow-up of 
an ordered configuration of codimension two linear subspaces. 
Let $\Lambda \subset |\psi^*H|$ be a general pencil 
and consider the incidence variety $\widetilde{Y}$
as in (\ref{eq:incidence}). 
Denote by $q \colon \widetilde{Y} \to Y$, 
$p \colon \widetilde{Y} \to \Lambda$ the projections. 
Let $e$ be the $q$-exceptional divisor and $f$ the $p$-fiber. 
Note that, by taking the pencil $\Lambda$ general, 
the composition $\widetilde{\psi} \colon \widetilde{Y} \to Y \to S^n_d$ 
is again the blow-up along 
an ordered configuration of codimension two linear subspaces. 
Put $h:=\widetilde{\psi}^*H$. 

For a real number $t \geq 0$, 
we put $h_t:=th+f$. 
Define a group homomorphism 
$Z_{t} \colon 
H^0(\widetilde{Y}, \bQ) \oplus \Lambda_{\widetilde{Y}}  \to \bC$ as follows: 
\[
Z_{t}\left(r, b, \sum_ia_iE_i \right):=-(t+1)b
+h^{n-2}f \left(\sum_ia_iE_i \right)
+\sqrt{-1}r. 
\]
We denote as $\mu_{Z_{t}}:=-\Re Z_{t}/\Im Z_{t}$. 
For a coherent sheaf $E \in \Coh(\widetilde{Y})$, we have 
\[
\mu_{h^{n-2}h_t}(E)=\mu_{Z_t}\left(\ch_0(E), \overline{\ch}^H_1(E) \right). 
\]

\begin{lem} \label{lem:q-neg}
Let the notations as above. 
Then $Q_{\widetilde{Y}, h}$, considered as 
a quadratic form on $(\Lambda_{\widetilde{Y}})_\bR$, 
is negative semi-definite on the kernel 
$\Ker (Z_{t}) \subset 
H^0(\widetilde{Y}, \bQ) \oplus \Lambda_{\widetilde{Y}}$. 
\end{lem}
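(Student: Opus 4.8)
The plan is to compute $Q_{\widetilde{Y}, h}$ explicitly in coordinates on the kernel of $Z_t$ and recognize the resulting quadratic form as a Hodge-index-type expression, which is negative semi-definite. First I would unwind the definitions: an element of $H^0(\widetilde{Y},\bQ) \oplus \Lambda_{\widetilde{Y}}$ is a triple $\left(r, b, \sum_i a_i E_i\right)$, and by definition
\[
Q_{\widetilde{Y}, h}\left(r, b, \textstyle\sum_i a_i E_i\right)
= q_{\widetilde{Y}}\left(b, \textstyle\sum_i a_i E_i\right) - 2 H^n r \cdot h^{n-2}\left(\textstyle\sum_i a_i E_i\right) \cdot (\text{the }\ch_2\text{-slot}),
\]
but note that $Q_{\widetilde{Y}, h}$ as stated only involves $\ch_0$, $\overline{\ch}^H_1$, and $\psi^*H^{n-2}\ch_2$; since we are told to view it as a quadratic form on $(\Lambda_{\widetilde{Y}})_{\bR}$, the relevant input is the pair $\left(b, \sum_i a_i E_i\right) \in \Lambda_{\widetilde{Y}}$ together with the rank $r$, and $q_{\widetilde{Y}}\left(b, \sum_i a_i E_i\right) = b^2 + H^n h^{n-2}\left(\sum_i a_i E_i\right)^2$. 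The kernel condition $Z_t\left(r, b, \sum_i a_i E_i\right) = 0$ forces $r = 0$ and $(t+1)b = h^{n-2} f \left(\sum_i a_i E_i\right)$.

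The key step is then the following. On $\Ker(Z_t)$ we have $r = 0$, so $Q_{\widetilde{Y}, h}$ reduces to $b^2 + H^n h^{n-2}\left(\sum_i a_i E_i\right)^2$ (the $\ch_2$-term disappears with $r=0$ and $\ch_0=0$). Writing $D := \sum_i a_i E_i$, the constraint reads $b = \frac{1}{t+1} h^{n-2} f \cdot D$, so I must show
\[
\frac{1}{(t+1)^2}\left(h^{n-2} f \cdot D\right)^2 + H^n \, h^{n-2} D^2 \leq 0.
\]
Since $h = \widetilde{\psi}^* H$ and $H^n = h^n$ (intersection numbers pull back), the natural thing is to apply the Hodge index theorem on the surface cut out by $h^{n-2}$ (a general complete intersection of $n-2$ members of $|h|$), on which $D$ restricts to a divisor and $f$ restricts to the fiber class of the pencil. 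The exceptional divisors $E_i$ restrict to curves there as well. The crucial numerical input — which is where I expect to lean on the geometry of the blow-up — is that $h^{n-1} D = 0$, i.e. $H^{n-1} \ch_1$ of the relevant piece is killed; this is exactly why $\overline{\ch}_1^H$ lands in the right place and why the $E_i$ contribute with the "wrong" sign. Concretely, on the cut-down surface $\Sigma$, $h|_\Sigma$ is nef and big with $(h|_\Sigma)^2 = h^n = H^n > 0$, and by the Hodge index theorem any divisor class $\gamma$ on $\Sigma$ satisfies $(h|_\Sigma)^2 \cdot \gamma^2 \leq \left((h|_\Sigma) \cdot \gamma\right)^2$. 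Applying this to $\gamma = D|_\Sigma$ and combining with the constraint gives the claimed sign, after checking that $h^{n-2} f D = (h|_\Sigma) \cdot (\text{appropriate class})$ matches up; one also uses that $h^{n-1}\cdot D$ and $h^{n-1}\cdot f$ vanish appropriately so that the relevant classes are $h$-primitive.

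The main obstacle will be bookkeeping the precise intersection-theoretic identities on $\widetilde{Y}$: making sure that restricting everything to a general $\Sigma = h^{n-2}$ is legitimate (general members of $|h|$ meet properly and the restriction of $\overline{\ch}_1^H$ behaves well, using that $\widetilde{Y} \to S^n_d$ is a blow-up of codimension-two linear subspaces so $|h|$ is still base-point free away from the exceptional locus in the right sense), and that $f|_\Sigma$ together with the $E_i|_\Sigma$ span a sublattice of $\NS(\Sigma)$ on which the form $b^2 + H^n(\cdot)^2$ really is the pullback of $-\overline{\Delta}$-type form. Once the reduction to a surface and the vanishing $h \cdot$-pairings are in hand, the conclusion is immediate from Hodge index; I would present it by first recording the kernel description ($r=0$ and the linear relation on $b$), then substituting into $Q_{\widetilde{Y},h}$, then invoking Hodge index on $\Sigma$.
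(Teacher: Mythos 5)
Your reduction is correct and matches the paper's first step: on $\Ker(Z_t)$ one has $r=0$, the form collapses to $b^2+H^n\,h^{n-2}D^2$ with $D=\sum_i a_iE_i$, and the kernel relation forces $b=\tfrac{1}{t+1}h^{n-2}fD$, so the target is
\[
\frac{1}{(t+1)^2}\bigl(h^{n-2}fD\bigr)^2+H^n\,h^{n-2}D^2\;\leq\;0.
\]
The gap is in the Hodge index step. Applying $(h|_\Sigma)^2\gamma^2\leq\left((h|_\Sigma)\cdot\gamma\right)^2$ to $\gamma=D|_\Sigma$ only yields $H^n\,h^{n-2}D^2\leq (h^{n-1}D)^2=0$, which says nothing about the positive term $\tfrac{1}{(t+1)^2}(h^{n-2}fD)^2$. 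That term is genuinely nonzero in general: for $D=e$ (the $q$-exceptional divisor) one has $h^{n-2}fe=-h^{n-2}e^2=H^n$. Also $h^{n-1}f=H^n\neq 0$, so $f$ is not $h$-primitive as your sketch suggests; the claim "combining with the constraint gives the sign" does not close.

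The fix is to test against $h_t=th+f$ rather than against $h$ alone. Concretely, on $\Sigma$ one has $f_\Sigma^2=0$, $h_\Sigma f_\Sigma=H^n$, $h_\Sigma D_\Sigma=0$; applying Hodge index to the corrected class $\gamma=D_\Sigma+\tfrac{m}{H^n}f_\Sigma$ with $m=h^{n-2}fD$ gives the stronger bound $H^n\,h^{n-2}D^2+m^2\leq 0$, whence the claim for all $t\geq 0$. This is precisely the mechanism of the paper's proof, phrased there lattice-theoretically: the class $\overline{h}_t=\pr_H(h_t)$ satisfies $q_{\widetilde{Y}}(\overline{h}_t)=\left((t+1)^2-1\right)(H^n)^2\geq 0$ and is $q_{\widetilde{Y}}$-orthogonal to $\Ker(Z_t)$, so negative semi-definiteness follows from the signature $(1,l)$ of $q_{\widetilde{Y}}$ (with a limit at $t=0$). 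Your overall strategy is the right one, but the choice of test class discards exactly the quantitative information the lemma is about.
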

\begin{proof}
Take an element $\left(r, b, \sum_ia_iE_i \right) \in \Ker(Z_t)$. 
Then we have $r=0$. 

On the other hand, 
let us put 
\[
\overline{h}_t:=\pr_H(h_t)=(t+1)h^n-e, 
\]
where $e$ denotes the $q$-exceptional divisor. 
Then we have 
\begin{align*}
&q_{\widetilde{Y}}(\overline{h}_t)=(t+1)^2(H^n)^2-(H^n)^2 \geq 0, \\ 
&q_{\widetilde{Y}}\left(\overline{h}_t, \left(b, \sum_ia_iE_i \right) \right)
=H^n\left(-\Re Z_t\left(b, \sum_ia_iE_i \right) \right)=0. 
\end{align*}
Since $q_{\widetilde{Y}}$ has signature $(1, l)$ 
for some $l \geq 1$, we get 
\[
Q_{\widetilde{Y}, h}\left(r, b, \sum_ia_iE_i \right)
=q_{\widetilde{Y}}\left( b, \sum_ia_iE_i \right) \leq 0
\]
for $t >0$. 
When $t=0$, as $\overline{h}_0$ is 
a limit of $\overline{h}_t$, 
the assertion also holds. 
\end{proof}

\begin{prop} \label{prop:changepol-bar}
Fix a positive integer $r \geq 2$. 
Suppose that Theorem \ref{thm:hyper} holds 
for $\ch_0(E) \leq r-1$. 
Let $F \in \Coh(\widetilde{Y})$ be a torsion free 
$f.\widetilde{\psi}^*H^{n-2}$-slope semistable sheaf 
with $\ch_0(F) \leq r-1$. 
Then we have 
\[
Q_{\widetilde{Y}, \widetilde{\psi}^*H}(F) \geq 0. 
\]
\end{prop}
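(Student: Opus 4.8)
The plan is to run Langer's deformation-of-polarization trick (the analogue of Proposition~\ref{prop:deform}/Proposition~\ref{prop:changepol}) but keeping track of the $H$-discriminant $Q_{\widetilde{Y}, h}$ instead of the naive $\widetilde{\Delta}$. Concretely, I would argue by induction on $\ch_0(F) \leq r-1$. The case $\ch_0(F)=1$ is immediate since then $Q_{\widetilde Y,h}(F)=q_{\widetilde Y}(\overline{\ch}_1^H(F))$, and $q_{\widetilde Y}$ restricted to the rank-one sublattice spanned by $\overline{h}_t$ is nonnegative while the exceptional part $\bigoplus_i\bZ[E_i]$ is negative definite — so one needs only that for a line bundle the $E_i$-coefficients can be absorbed; more honestly, for $\ch_0=1$ one checks the Hodge-index-type inequality directly as in the surface case. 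So assume $2 \leq \ch_0(F) \leq r-1$.

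For the inductive step, set $h_t := th + f$ for $t \geq 0$ and consider $\mu_{h^{n-2}h_t}$-stability, which by the displayed identity equals $\mu_{Z_t}$-stability through the homomorphism $(\ch_0, \overline{\ch}_1^H)$. Since $F$ is $f.\widetilde{\psi}^*H^{n-2}$-semistable, it is $\mu_{Z_0}$-semistable. If $F$ stays $\mu_{Z_t}$-semistable for all $t$ large (equivalently, $\mu_{\widetilde\psi^*H}$-semistable up to the leading coefficient), then $F$ is $\psi$-semistable on $\widetilde Y$ with respect to the iterated blow-up structure $\widetilde\psi$, and we conclude directly from Theorem~\ref{thm:hyper} for $\ch_0 \leq r-1$ (its hypothesis). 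Otherwise there is a critical value $t_0 \geq 0$ at which $F$ becomes strictly $\mu_{Z_{t_0}}$-semistable, giving a short exact sequence
\[
0 \to F_1 \to F \to F_2 \to 0
\]
of $\mu_{Z_{t_0}}$-semistable torsion-free sheaves with $\mu_{Z_{t_0}}(F_1)=\mu_{Z_{t_0}}(F)=\mu_{Z_{t_0}}(F_2)$, and with $\ch_0(F_i) < \ch_0(F)$. Each $F_i$ is $f.\widetilde\psi^*H^{n-2}$-semistable (slope-semistability in the relevant ray), so by the induction hypothesis $Q_{\widetilde Y,h}(F_i)\geq 0$.

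The final step is the concavity estimate: the class $v:=(\ch_0(F_1), \overline{\ch}_1^H(F_1))\ch_0(F) - \ch_0(F_1)(\ch_0(F), \overline{\ch}_1^H(F))$ lies in $\Ker(Z_{t_0})$ (this is exactly the equal-$\mu_{Z_{t_0}}$-slope condition, after noting $r=0$ forces the imaginary part to vanish), so Lemma~\ref{lem:q-neg} gives $Q_{\widetilde Y,h}(v) \leq 0$. Expanding $Q_{\widetilde Y,h}$ bilinearly and using that the $\ch_0\cdot\ch_2$ cross terms combine correctly — this is the same bilinear manipulation as in Lemma~\ref{lem:strss}, now with $q_{\widetilde Y}$ in place of the intersection form — yields
\[
\frac{Q_{\widetilde Y,h}(F)}{\ch_0(F)} \geq \frac{Q_{\widetilde Y,h}(F_1)}{\ch_0(F_1)} + \frac{Q_{\widetilde Y,h}(F_2)}{\ch_0(F_2)} \geq 0.
\]
I expect the main obstacle to be bookkeeping: verifying that $Q_{\widetilde Y,h}$ is additive-up-to-the-Hodge-index-defect along short exact sequences when one of the summands in the definition is the non-standard quadratic form $q_{\widetilde Y}$ rather than honest intersection, and that the ``$F$ eventually stays semistable'' alternative really does feed into Theorem~\ref{thm:hyper}'s hypothesis with the correct blow-up structure $\widetilde\psi$. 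Neither is deep, but both require care, and the first is where a sign error would hide.
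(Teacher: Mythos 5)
Your argument is essentially the paper's: the paper proves this proposition by citing \cite[Proposition 8.9]{blms17} together with Lemma~\ref{lem:q-neg}, and what you have written is precisely an unpacking of that cited argument (deform the polarization along $h_t=th+f$, cross walls via short exact sequences of equal $\mu_{Z_{t_0}}$-slope, and use the negative semi-definiteness of $Q_{\widetilde{Y}, h}$ on $\Ker Z_{t_0}$ to obtain superadditivity of $Q/\ch_0$). The only point to tidy is that the induction hypothesis should be stated for all $t\in[0,\infty]$ simultaneously, since the wall factors $F_i$ are a priori only $\mu_{Z_{t_0}}$-semistable rather than $f.\widetilde{\psi}^*H^{n-2}$-semistable.
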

\begin{proof}
The proof of \cite[Proposition 8.9]{blms17} works 
by Lemma \ref{lem:q-neg} above. 
\end{proof}

\begin{proof}[Proof of Theorems \ref{thm:hyper} and \ref{thm:hyperrestr}]
Now we can check that the same proofs as in the previous sections work, 
using Propositions \ref{prop:hyper2} and \ref{prop:changepol-bar}. 
\end{proof}

\section{Construction of Bridgeland stability conditions} \label{sec:stab}
In this section, we construct Bridgeland stability conditions on surfaces, 
defined over an algebraically closed field of positive characteristic. 
We refer to papers \cite{bms16, bmt14a, bri07, bri08} 
and a lecture note \cite{ms17} for the basics of the theory of stability conditions. 

Let us first recall the definition of Bridgeland stability conditions \cite{bri07}: 

\begin{defin}
Let $\mcA$ be an abelian category. 
\begin{enumerate}
\item A group homomorphism 
$Z \colon K(\mcA) \to \bC$ 
is called a {\it stability function} if 
we have 
$Z\left(\mcA \setminus \{0\} \right) \subset \mcH \cup \bR_{<0}$, 
where we denote by $\mcH$ the upper half plane. 
\item For a stability function $Z \colon K(\mcA) \to \bC$ 
and an object $E \in \mcA$, 
we define the {\it $Z$-slope} of $E$ as 
\[
\mu_Z(E):=-\frac{\Re Z(E)}{\Im Z(E)}. 
\]
\item We say that an object 
$E \in \mcA$ is {\it $Z$-(semi)stable} if 
for every non-zero proper subobject 
$0 \neq F \subsetneq E$, 
we have an inequality $\mu_Z(F) \leq \mu_Z(E)$. 
\end{enumerate}
\end{defin}

\begin{defin}
Let $\mcD$ be a triangulated category. 
A {\it stability condition} on $\mcD$ is a pair $(Z, \mcA)$ 
consisting of the heart 
$\mcA \subset \mcD$ 
of a bounded t-structure 
and a group homomorphism 
$Z \colon K(\mcA) \to \bC$ 
satisfying the following axioms: 
\begin{enumerate}
\item The group homomorphism 
$Z \colon K(\mcA) \to \bC$ 
is a stability function. 
\item It satisfies the {\it Hardar-Narasimhan (HN)} property, i.e., 
for every object $E \in \mcA \setminus \{0\}$, 
there exists a Hardar-Narasimhan filtration of $E$ 
with respect to $Z$-stability. 
\end{enumerate}
We also call a homomorphism $Z$ as 
a {\it central charge}. 
\end{defin}

\begin{defin}
Let $\mcD$ be a triangulated category. 
Fix a finitely generated free abelian group $\Lambda$ 
and a group homomorphism 
$\cl \colon K(\mcD) \to \Lambda$. 
We say that a stability condition $(Z, \mcA)$ on $\mcD$ 
satisfies the {\it support property with respect to $(\Lambda, \cl)$} if 
the central charge $Z$ factors through $\cl$, i.e., 
$Z \colon K(\mcD) \to \Lambda \to \bC$, and 
there exists a quadratic form $Q$ on $\Lambda \otimes_{\bZ} \bR$ 
such that 
\begin{enumerate}
\item The kernel $\Ker Z \subset \Lambda$  is $Q$-negative definite, 
\item For every $Z$-semistable object $0 \neq E \in \mcA$, 
we have $Q(\cl(E)) \geq 0$. 
\end{enumerate}
\end{defin}

In the following, we explain the construction 
of Bridgeland stability conditions on 
the derived category $D^b(S)$ of 
a smooth projective surface $S$. 
We fix a lattice 
$H^*_{alg}(S):=\Image\left(\ch \colon K(X) \to H^{2*}(X, \bQ_\ell) \right)$. 

We use the following central charge function: 
\begin{defin} 
Let $S$ be a smooth projective surface, 
$H$ an ample $\bR$-divisor 
and $B$ an arbitrary $\bR$-divisor. 
Let $C_{[S]}$ be a constant defined in 
Definition \ref{defin:constant}. 
We define a group homomorphism 
$Z_{H, B}^{[S]} \colon K(X) \to \bC$ as follows: 
\[
Z_{H, B}^{[S]}:=
-\ch_2^B+\left(\frac{C_{[S]}}{2H^2}+\frac{1}{2} \right)H^2\ch_0^B
+\sqrt{-1}H\ch_1^B. 
\]
\end{defin}

For the heart, we need the theory of {\it torsion pairs} and {\it tilting} 
(see \cite{hrs96}). 
Let us fix an ample $\bR$-divisor $H$ on $S$, 
and an arbitrary $\bR$-divisor $B$. 
We define full subcategories 
$\mcT_{H, B}, \mcF_{H, B} \subset \Coh(S)$ as follows: 
\begin{align*}
&\mcT_{H, B}:=\left\langle
	T \in \Coh(S) \colon T \mbox{ is } \mu_H \mbox{-semistable with } \mu_H(T) > HB
	\right\rangle \\
&\mcF_{H, B}:=\left\langle
	F \in \Coh(S) \colon F \mbox{ is } \mu_H \mbox{-semistable with } \mu_H(F) \leq HB
	\right\rangle, 
\end{align*}
where, for a set $S \subset \Coh(S)$, 
we denote by $\langle S \rangle \subset \Coh(S)$ the extension closure. 
We then define the {\it tilted-heart} as 
the extension closure in the derived category: 
\[
\Coh^{H, B}(S):=\langle \mcF_{H, B}[1], \mcT_{H, B} \rangle 
\subset D^b(S). 
\]

We need the following lemma: 
\begin{lem}[{\cite[Corollary 7.3.3]{bmt14a}}]
\label{lem:support}
Let $H$ be an ample divisor on $S$. 
There exists a constant $C_H \geq 0$ such that 
for every effective divisor $D$ on $S$, we have 
\[
C_H(HD)^2+D^2 \geq 0. 
\]
\end{lem}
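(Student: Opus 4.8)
The plan is to reduce the inequality to two elementary facts: the genus (adjunction) formula for prime divisors on a surface, and the non-negativity of the intersection number of two distinct prime divisors. Since both sides of the claimed inequality depend only on the numerical class of $D$, it is enough to treat an honest effective divisor, and the case $D=0$ is trivial.

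First I would fix an integer $n \geq 1$ for which $nH + K_S$ and $nH - K_S$ are both nef; such $n$ exists since $H$ is ample (indeed these divisors become ample for $n \gg 0$). Then, for any prime divisor $C \subset S$, nefness of $nH \pm K_S$ gives $|K_S\cdot C| \leq n\,(H\cdot C)$, while the genus formula together with $p_a(C) \geq 0$ gives $C^2 = 2p_a(C) - 2 - K_S\cdot C \geq -2 - n\,(H\cdot C)$. As $H$ is ample we have $H\cdot C \geq 1$, and the factorization $(n+2)t^2 - nt - 2 = (t-1)\bigl((n+2)t+2\bigr)$ shows that $-2 - n\,(H\cdot C) \geq -(n+2)(H\cdot C)^2$. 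Hence, with $C_H := n+2 \geq 0$, every prime divisor satisfies the uniform bound
\[
C^2 \geq -C_H\,(H\cdot C)^2.
\]

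Finally, writing an effective divisor as $D = \sum_i a_i C_i$ with $a_i \in \bZ_{>0}$ and the $C_i$ distinct prime divisors, I would combine $C_i\cdot C_j \geq 0$ for $i \neq j$, the bound of the previous paragraph, and the inequality $\sum x_i^2 \leq \bigl(\sum x_i\bigr)^2$ for non-negative reals to estimate
\begin{align*}
D^2 &= \sum_i a_i^2 C_i^2 + 2\sum_{i<j} a_i a_j\,(C_i\cdot C_j)
\;\geq\; \sum_{i \colon C_i^2 < 0} a_i^2 C_i^2 \\
&\geq -C_H \sum_i a_i^2 (H\cdot C_i)^2
\;\geq\; -C_H\Bigl(\sum_i a_i\,(H\cdot C_i)\Bigr)^2 = -C_H\,(H\cdot D)^2,
\end{align*}
which is the asserted inequality $C_H(H\cdot D)^2 + D^2 \geq 0$.

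I do not expect any serious obstacle here: the single non-formal input is that $nH \pm K_S$ are ample for $n \gg 0$, which is standard. The main point to be careful about is that the whole argument is purely numerical — the genus formula, the definition of nefness, and the positivity of $C_i\cdot C_j$ are all valid over an arbitrary field — so that the constant $C_H$ depends only on $(S,H)$ and the statement holds in positive characteristic as required.
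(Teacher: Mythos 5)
Your proof is correct. The paper itself does not prove this lemma but simply cites \cite[Corollary 7.3.3]{bmt14a}, and your argument (adjunction plus $p_a(C)\geq 0$ to bound $C^2$ from below for each prime component, nonnegativity of $C_i\cdot C_j$ for distinct primes, and $\sum x_i^2 \leq (\sum x_i)^2$ to assemble the estimate for $D=\sum a_iC_i$) is essentially the standard proof given in that reference, valid in any characteristic.
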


\begin{thm} \label{thm:stab}
Let $S$ be a smooth projective surface 
defined over an algebraically closed field of positive characteristic. 
Let $H$ be an ample $\bR$-divisor 
and $B$ an arbitrary $\bR$-divisor.  
Let $C_{[S]}$ be the constant defined as in Definition \ref{defin:constant}. 
Then the pair $\left(Z_{H, B}^{[S]}, \Coh^{H, B}(S) \right)$ 
defines a Bridgeland stability condition on $D^b(S)$ 
and satisfies the support property with respect to $(\ch, H^*_{alg}(S))$. 
\end{thm}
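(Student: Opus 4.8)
The plan is to follow the now-standard Bridgeland--Bayer--Macr\`i--Toda recipe for surfaces, the only new ingredients being our improved inequality (Theorem~\ref{thm:bg1}) and the auxiliary lattice-positivity Lemma~\ref{lem:support}. First I would check that $Z_{H,B}^{[S]}$ is a stability function on the tilted heart $\Coh^{H,B}(S)$. By construction $\Im Z_{H,B}^{[S]}=H\ch_1^B$, which is nonnegative on $\mcT_{H,B}$ and nonpositive on $\mcF_{H,B}[1]$, so $\Im Z_{H,B}^{[S]}\geq 0$ on all of $\Coh^{H,B}(S)$; the content is showing that an object $E$ with $\Im Z_{H,B}^{[S]}(E)=0$ has $\Re Z_{H,B}^{[S]}(E)<0$. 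Such an $E$ is an extension of a torsion sheaf supported in dimension $\leq 1$ (coming from $\mcT_{H,B}$) by $F[1]$ with $F$ a $\mu_H$-semistable sheaf of slope exactly $HB$; reducing to these two cases, for the torsion part $-\ch_2^B(E)<0$ unless $E$ is supported in dimension $0$, which is excluded, and for $F[1]$ one uses that $-\ch_2^B+\bigl(\tfrac{C_{[S]}}{2H^2}+\tfrac12\bigr)H^2\ch_0^B$ evaluated on $-F$ is, up to positive scalar, $\tfrac1{2\ch_0(F)}\bigl(\widetilde\Delta_{[S]}(F)+(\text{Hodge-index error term})\bigr)$ together with the strict term coming from the $+\tfrac12 H^2\ch_0^B$, so positivity of $\widetilde\Delta_{[S]}(F)$ from Theorem~\ref{thm:bg1} closes it. This is the step where our stronger BG inequality is essential: the classical inequality fails in positive characteristic, but the modified $\widetilde\Delta_{[S]}$ is exactly tailored so that the constant $\tfrac{C_{[S]}}{2H^2}$ absorbs the correction.

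Next I would establish the Harder--Narasimhan property. The standard approach (as in \cite{bmt14a}, \cite{bms16}) is to verify that $\Coh^{H,B}(S)$ is noetherian and that $\Im Z_{H,B}^{[S]}$ is a discrete, locally finite function on classes of objects in the heart, then invoke the general criterion of \cite{bri07}. Noetherianity of the tilted heart is proved exactly as in the characteristic-zero case; discreteness of $\Im Z_{H,B}^{[S]}=H\ch_1^B$ on the lattice $H^*_{alg}(S)$ follows because $H\ch_1$ takes values in a discrete subset of $\bR$ after fixing the twist $B$ (its rational part is bounded below in absolute value). These arguments are formal and do not use anything specific to positive characteristic.

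Finally I would prove the support property with respect to $(\ch, H^*_{alg}(S))$. Take the quadratic form $Q:=\overline\Delta_H+(C_{[S]}+C_H H^2)\ch_0^2+\text{(terms in }\ch_1\text{)}$---more precisely, a suitable combination of $\Delta$, the Hodge-index form $\bigl(H^2\ch_1^2-(H\ch_1)^2\bigr)$ weighted by Lemma~\ref{lem:support}'s constant $C_H$, and $C_{[S]}\ch_0^2$---so that on the kernel of $Z_{H,B}^{[S]}$ (where both $\ch_0^B$ and $H\ch_1^B$ vanish) $Q$ restricts to a negative definite form, using that the Hodge index theorem makes $\ch_1^2<0$ on the primitive-to-$H$ part and the $+\tfrac12 H^2\ch_0^2$ term in $Z$ kills the rank direction. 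For property (2), that every $Z_{H,B}^{[S]}$-semistable object has $Q(\ch E)\geq 0$: one runs the usual case analysis on the cohomology sheaves $H^{-1}(E), H^0(E)$ in the heart, using that their $\mu_H$-semistable HN factors satisfy $\widetilde\Delta_{[S]}\geq 0$ by Theorem~\ref{thm:bg1}, plus Lemma~\ref{lem:support} to control the $\ch_1$-contribution when passing between the $H$-discriminant and the full discriminant. The main obstacle is getting the bookkeeping of the correction constants right so that $Q$ is simultaneously negative definite on $\Ker Z$ and nonnegative on all semistable classes; this is where one must be careful that $C_{[S]}$ from Theorem~\ref{thm:bg1} and $C_H$ from Lemma~\ref{lem:support} combine correctly, but no genuinely new idea beyond the two cited inputs is needed.
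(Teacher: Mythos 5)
Your proposal follows essentially the same route as the paper: the key positivity check $\Re Z_{H,B}^{[S]}(F)>0$ for $\mu_H$-semistable $F$ with $\mu_H(F)=HB$ via Theorem \ref{thm:bg1} combined with the Hodge index theorem, the standard tilting/HN machinery of \cite{ab13, bri08}, and the support property via the quadratic form $\widetilde{\Delta}_{[S]}+C_H(H\ch_1^B)^2$ with $C_H$ from Lemma \ref{lem:support}, exactly as in \cite[Theorem 3.5]{bms16}. The only slips are cosmetic: a torsion sheaf in $\mcT_{H,B}$ with $H\ch_1=0$ is automatically supported in dimension $0$ and then $\Re Z=-\ch_2<0$ is what one wants (it is not "excluded"), and for irrational $B$ the image of $H\ch_1^B$ need not be discrete, so the HN property should be justified via noetherianity plus the support property rather than discreteness — but the paper elides these points at the same level of detail.
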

\begin{proof}
We check that the following condition holds: 
Let $E \in \Coh(S)$ be a $\mu_H$-semistable torsion free sheaf 
with $\mu_H(E)=HB$. 
Then we have $\Re Z_{\alpha, \beta}^{[S]}(E) > 0$. 
Indeed, by Theorem \ref{thm:bg1}, we have 
\begin{align*}
0 &\leq \Delta(E)+C_{[S]}\ch_0(E)^2 
\leq \overline{\Delta}_{H, B}(E)+C_{[S]}\ch_0(E)^2 \\
&=\ch_0^B(E)\left(-2H^2\ch^B_2(E)+C_{[S]}\ch_0^B(E) \right), 
\end{align*}
where we define 
$\overline{\Delta}_{H, B}:=\left(H\ch_1^B \right)^2-2H^2\ch_0^B\ch_2^B$, 
and the second inequality follows from 
the Hodge index theorem. 
Hence the desired inequality $\Re Z_{H, B}^{[S]}(E) > 0$ holds.  
Now we can show that the pair 
$\left(Z_{H, B}^{[S]}, \Coh^{H, B}(S) \right)$ 
defines a stability condition 
by the standard arguments as in \cite{ab13, bri08}. 

For the support property, 
let $C_H \geq 0$ be as in Lemma \ref{lem:support}. 
We can show that the quadratic form 
\[
\widetilde{\Delta}_{[S]}+C_H(H\ch_1^B)^2
\]
satisfies the required property, 
see \cite[Theorem 3.5]{bms16} for the details. 
\end{proof}


\end{document}